\documentclass[11pt]{amsart}

\usepackage[T2A]{fontenc}
\usepackage[utf8]{inputenc}
\usepackage[english]{babel}
\usepackage{amsfonts, amssymb}
\usepackage{ccfonts,euler}
\usepackage{stmaryrd}
\usepackage[left=2.0cm,right=2.0cm,top=2.0cm,bottom=2.0cm]{geometry}
\usepackage{cite}%
\usepackage[colorlinks=true]{hyperref}

\usepackage[all,cmtip]{xy}
\usepackage{capt-of} 
\usepackage{thm-restate}

\title{Overgroups of exterior powers of an elementary group.\\Levels}

\author{Roman~Lubkov}
\thanks{The main results of the present paper were proven in the framework of the RSF project 17-11-01261}
\address[Roman~Lubkov]{St.~Petersburg Department of V.\,A.\,Steklov Institute of Mathematics of the Russian Academy of Sciences}
\email{RomanLubkov@yandex.ru}

\author{Ilia~Nekrasov}
\address[Ilia~Nekrasov]{Department of Mathematics, University of Michigan, Ann Arbor, MI}
\email{inekras@umich.edu, geometr.nekrasov@gmail.com}

\keywords{General linear group, elementary group, overgroup, fundamental representation, exterior power, level}

\subjclass{20G35}
\date{}
\sloppy

\let\opn\operatorname
\DeclareMathOperator{\E}{E}
\DeclareMathOperator{\M}{M}
\DeclareMathOperator{\CC}{C}
\DeclareMathOperator{\EO}{EO}
\DeclareMathOperator{\SL}{SL}
\DeclareMathOperator{\GL}{GL}
\DeclareMathOperator{\sign}{sgn}
\DeclareMathOperator{\height}{ht}
\DeclareMathOperator{\Ar}{Ar}

\renewcommand{\trianglelefteq}{\trianglelefteqslant}
\renewcommand{\leq}{\leqslant}
\renewcommand{\geq}{\geqslant}

\newcommand{\bw}[1]{\mathord{\raisebox{2pt}
{\hbox{$\scriptstyle{\bigwedge^{\!#1}}$}}}}
\newcommand\blank{\mathord{\hbox to 1.5ex{\hrulefill}}\,}

\theoremstyle{plain}
\newtheorem{theorem}{Theorem}
\newtheorem*{generalproblem}{Problem}
\newtheorem{proposition}[theorem]{Proposition}
\newtheorem{lemma}[theorem]{Lemma}
\newtheorem{corollary}[theorem]{Corollary}
\theoremstyle{remark}
\newtheorem*{remark}{Remark}

\raggedbottom

\begin{document}

\begin{abstract}
We prove a first part of the \textit{standard description} of groups $H$ lying between an exterior power of an elementary group $\bw{m}\E_n(R)$ and a general linear group $\GL_{\binom{n}{m}}(R)$ for a commutative ring $R, 2 \in R^{*}$ and $n \geq 3m$. The description uses the classical notion of a \textit{level}: for every group $H$ we find a unique ideal $A$ of the ground ring $R$ which describes $H$.
\end{abstract}
\maketitle

\section*{Introduction}\label{sec:fundrepres}

The present paper is devoted to the solution of the following general problem.
\begin{generalproblem}
Let $R$ be an arbitrary commutative associative ring with $1$ and let $\Phi$ be a reduced irreducible root system. $G(\Phi, \blank)$ is a Chevalley--Demazure group scheme and $\rho\colon G(\Phi, \blank) \longrightarrow \GL_{N}(\blank)$ is its arbitrary representation. Describe all overgroups $H$ of the elementary subgroup $\E_{G}(\Phi, R)$ in the representation $\rho$:
$$\E_{G, \rho}(\Phi, R ) \leq H \leq \GL_{N}(R).$$
\end{generalproblem}

The conjectural answer, the \textit{standard overgroup description}, in a general case can be formulated as follows.
For any overgroup $H$ of the elementary group there exists a net of ideals $\mathbb{A}$ of the ring $R$ such that
$$\E_{G,\rho}(\Phi,R)\cdot\E_N(R,\mathbb{A})\leq H \leq N_{\GL_{N}(R)}\big(\E_{G,\rho}(\Phi, R)\cdot\E_N(R,\mathbb{A})\big),$$
where $\E_N(R,\mathbb{A})$ is a \textit{relative elementary subgroup} for the net $\mathbb{A}$.

In the special case of a trivial net, i.\,e., $\mathbb{A} = \{A\}$ consists of one ideal $A$ of the initial ring $R$, overgroups of the group $\E_{G}(\Phi,R)$ can be parametrized by the ideal $A$ of the ring $R$:
\begin{equation}
\E_{G,\rho}(\Phi,R)\cdot\E_N(R,A)\leq H \leq N_{\GL_{N}(R)}\big(\E_{G,\rho}(\Phi, R)\cdot\E_N(R,A)\big),
    \label{eq:G_A}
\end{equation}
where $\E_N(R,A)$ equals $\E_N( A)^{\E_N(R)}$ by definition. 
\medskip

Based on the classification of finite simple groups in 1984, Michael Aschbacher proved the Subgroup structure theorem~\cite{AschbacherClasses}. It states that every maximal subgroup of a finite classical group either falls into one of the eight explicitly described classes $\mathcal{C}_1$--$\mathcal{C}_8$, or is an ``almost'' simple group in an irreducible representation (class $\mathcal{S}$). In the recent past, many experts studied overgroups of groups from the Aschbacher classes for some special cases of fields. For finite fields and algebraically closed fields maximality of subgroups was obtained by Peter Kleidman and Martin Liebeck, see~\cite{KleidLiebSubStruct,LiebSeitStrucClass}. Oliver King, Roger Dye, and Shang Zhi Li proved maximality of groups from Aschbacher classes for arbitrary fields or described its overgroups in cases where they are not maximal, see~\cite{KingDiag,KingUnit,KingOrthog,DyeSpO,DyeOinSp,DyeSubPolarities,ShangZhiGL,ShangZhiUnit,ShangZhiSUOmega}. We recommend the surveys~\cite{VavSubgroup90,VavSbgs,VavStepSurvey}, which contain necessary preliminaries, complete history, and known results about the initial problem.

In the present paper, we consider the case of the $m$-th fundamental representation of a simply connected group of type $A_{n-1}$, i.\,e., the scheme $G_{\rho}(\Phi,\blank)$ equals a [Zariski] closure of the affine group scheme $\SL_{n}(\blank)$ in the representation with the highest weight $\varpi_{m}$. In our case the extended Chevalley group scheme coincides with the $m$-th fundamental representation of the general linear group scheme $\GL_{n}(\blank)$. This case corresponds to the Aschbacher class ${\mathcal{S}}$ consisting of almost simple groups in certain absolutely irreducible representations. Morally, the paper is a continuation of a series of papers by the St.~Petersburg school on subgroups in classical groups over a commutative ring, see~\cite{BV82,BV84,VavSubgroup90,VP-EOeven,VP-EOodd,VP-Ep,PetOvergr,StepGL,StepNonstandard,LuzE6E7-GL,LuzF4-E6,VavLuzgE6,VavLuzgE7,AnaVavSinI}.

We deal only with the case of a trivial net, i.\,e., a net consists of only one ideal $A$. As shown below (Propositions~\ref{prop:InversInclus} and~\ref{prop:LevelsEqualGeneral}), it imposes a constraint $n\geq 3m$, we proceed with this restriction. In this case the general answer has the following form. Let $N = \binom{n}{m}$ and $H$ be a subgroup in $\GL_{N}(R)$ containing $\bw{m}\E_n(R)$. Then there exists a unique maximal ideal $A\trianglelefteq R$ such that
$$\bw{m}\E_n(R)\cdot\E_N(R,A)\leq H \leq N_{\GL_{N}(R)}\big(\bw{m}\E_n(R)\cdot\E_N(R,A)\big).\eqno{(*)}$$
The present paper is the first part in the serial study of the problem. We construct a level and calculate a normalizer of connected (i.\,e., perfect) intermediate subgroups. Further, it is necessary to construct invariant forms for $\bw{m}\SL_n(R)$ and calculate a normalizer of $\bw{m}\E_n(R)$. Finally, we will extract an elementary transvection from an intermediate subgroup $H$. These steps are enough to solve the problem completely, see~\cite{LubStepSub}.

There are separate results for special cases of the ring $R$ such as a finite field $K$ or an algebraically closed field. For finite fields Bruce Cooperstein proved maximality of the normalizer $N_G\bigl(\bw{2}\E_n(K)\bigr)$ in $\GL_N(K)$~\cite{CoopersteinStructure}. For algebraically closed fields a description of overgroups of $\bw{m}\E_n(K)$ follows from the classical results about maximal subgroups of classical algebraic groups, for instance, see~\cite{SeitzMaxSub}.
\medskip

The present paper is organized as follows. In the next Section we formulate main results of the paper. In Section~\ref{sec:princnot} we set up the notation. Section~\ref{sec:proofs} contains all complete proofs, for instance, in~\S\ref{subsec:square} we have considered an important special case --- a level computation for exterior squares of elementary groups. In~\S\ref{subsec:general}--\S\ref{subsec:levelcomputation} we develop a technique for an arbitrary general exterior power. Finally, a level reduction for exterior powers is proved in~\S\ref{subsec:normgeneral}.

\textbf{Acknowledgment.} We would like to express our sincere gratitude to our scientific adviser Nikolai Vavilov for formulating the problem and for a constant support, without which this paper would never have been written. The authors are grateful to Alexei Stepanov for carefully reading our original manuscript and for numerous remarks and corrections. Also, we would like to thank an anonymous referee for bringing our attention to the paper \cite{GariGura2015}.

\section{Main results}\label{sec:main_results}

Fundamental representations of the general linear group $\GL_{n}$, as well as of the special linear group $\SL_{n}$, are the ones with the highest weights $\varpi_{m} = \underbrace{(1, \dots, 1)}_{m}$ for $m = 1, \dots, n$. The representation with the highest weight $\varpi_{n}$ degenerates for the group $\SL_{n}$. The explicit description of these representations uses exterior powers of the standard representation.

In detail, for a commutative ring $R$ by $\bw{m} R^{n}$ we denote an $m$-th exterior power of the free module $R^n$. We consider the following natural transformation, an \textit{exterior power}, 
$$\bw{m}: \GL_{n} \rightarrow \GL_{\binom{n}{m}}$$
which extends the action of the group $\GL_{n}(R)$ from $R^n$ to $\bw{m} R^{n}$. 

An elementary group $\E_n( R)$ is a subgroup of the group of points $\GL_{n}(R)$, so its exterior power $\bw{m}\E_n(R)$ is a well defined subgroup of the group of points $\bw{m}\GL_{n} (R)$. A more user--friendly description of the elementary group $\bw{m}\E_n(R)$ will be presented in Subsections~\ref{subsec:square} and~\ref{subsec:general}.

Let $H$ be an arbitrary overgroup of an elementary group $\bw{m}\E_n(R)$:
$$\bw{m}\E_n(R) \leq H \leq \GL_{\binom{n}{m}}(R).$$ 
For any unequal weights $I, J \in \bw{m}[n]$, which are indices for matrix entries of elements from $\GL_{\binom{n}{m}}(R)$, by $A_{I,J}$ we denote the following set 
$$A_{I,J}:=\{\xi \in R \;|\; t_{I,J}(\xi) \in H \} \subseteq R.$$

It turns out these sets are ideals that coincide for any pair of unequal weights $I\neq J$.

\begin{proposition}\label{prop:InversInclus}
Sets $A_{I,J}$ coincide for $n \geq 3m$. 
\end{proposition}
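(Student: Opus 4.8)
The plan is to show that the relation "$A_{I,J} = A_{I',J'}$" is preserved under elementary row and column operations coming from $\bw{m}\E_n(R)$, and that the graph on pairs $(I,J)$ of distinct weights, with edges whenever two such transvection-supports can be linked by a single conjugation available inside $H$, is connected once $n \geq 3m$. Concretely, I would fix $\xi \in A_{I,J}$, so $t_{I,J}(\xi) \in H$, and conjugate by a suitable element $g \in \bw{m}\E_n(R) \leq H$; since $H$ is a group, $g\, t_{I,J}(\xi)\, g^{-1} \in H$, and the goal is to choose $g$ of the form $\bw{m}(e_{k,l}(\zeta))$ so that this conjugate is again an elementary transvection $t_{I',J'}(\pm\xi)$ (or a short product of such), thereby witnessing $\xi \in A_{I',J'}$.

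The key computational input is the action of $\bw{m}$ on a root element $e_{k,l}(\zeta) \in \E_n(R)$: the matrix $\bw{m}(e_{k,l}(\zeta))$ acts on the basis vector $e_I$ (for $I \in \bw{m}[n]$) by fixing it when $k \notin I$ or $l \in I$, and sending $e_I \mapsto e_I \pm \zeta\, e_{(I\setminus\{k\})\cup\{l\}}$ when $k \in I, l \notin I$. In other words, $\bw{m}(e_{k,l}(\zeta))$ is itself, up to signs, a product of commuting elementary transvections $t_{J,\,(J\setminus k)\cup l}(\pm\zeta)$ over those weights $J$ containing $k$ but not $l$. Conjugating $t_{I,J}(\xi)$ by such an element and using the standard commutator formula $[t_{I,J}(\xi), t_{J,K}(\zeta)] = t_{I,K}(\pm\xi\zeta)$ for $I,J,K$ distinct (and the fact that disjoint-support transvections commute), one moves the "target" weight $J$ to an adjacent weight $J' = (J \setminus \{k\}) \cup \{l\}$, and similarly the source weight $I$. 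So the first main step is to record these conjugation/commutator identities precisely, keeping track of which weights are "active."

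The second main step, and the place where the hypothesis $n \geq 3m$ is essential, is a connectivity argument on the set $\bw{m}[n]$ of $m$-subsets of $[n]$: I need that any two distinct pairs $(I,J)$ and $(I',J')$ are connected by a chain of the elementary moves above, and — crucially — the moves used to reroute $I$ to $I'$ must not collide with $J$ (one needs a third "spare" weight $K$ distinct from both the current source and target to apply the commutator formula, and room to slide indices around without ever making the two active weights equal). With $n \geq 3m$ one has enough free indices outside $I \cup J$ (which has at most $2m$ elements) to park intermediate weights and to realize any transposition-type move on $I$ or on $J$ one index at a time while keeping source $\neq$ target throughout. I would phrase this as: the "flag graph" whose vertices are ordered pairs of distinct $m$-subsets and whose edges are the admissible single-index slides is connected when $n \geq 3m$, proved by explicitly walking $I \to I'$ through weights disjoint enough from $J$, then $J \to J'$.

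The main obstacle I anticipate is exactly this bookkeeping of "active weights never coincide": the commutator and conjugation formulas that turn a conjugate of $t_{I,J}(\xi)$ back into a single transvection are only valid when the three weights involved are pairwise distinct, so the connectivity walk must be engineered to respect that constraint at every intermediate step, and it is here that the bound $3m$ (rather than, say, $2m+1$) enters — one needs simultaneously: the $m$ indices of a "parking" weight, disjoint from the $\leq 2m$ indices currently in play. I expect the write-up to handle the case $m = 2$ first (as the excerpt announces) where the indices can be drawn on a small picture, and then to do the general case by induction on the size of the symmetric difference $|I \bigtriangleup I'| + |J \bigtriangleup J'|$, reducing it at each stage to a single admissible slide justified by the commutator identities of step one.
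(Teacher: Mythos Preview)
Your connectivity strategy has a genuine gap: the moves you propose can never \emph{increase} the height $|I\cap J|$, so the graph you describe is not connected. Concretely, a ``type-2'' commutator $[t_{I,J}(\xi),\bw{m}t_{j,i}(\zeta)]$ with $i\in I$, $j\notin J$ produces $t_{\tilde I,J}(\pm\zeta\xi)$ where $\tilde I=(I\setminus i)\cup j$; since $j\notin J$ one has $|\tilde I\cap J|\leq|I\cap J|$, and the case $j\in J$, $i\notin I$ is symmetric. Conjugation by longer words of $\bw{m}\E_n(R)$ that still return a \emph{single} transvection are essentially monomial and likewise preserve height. Hence your argument yields only the easy chain $A_0\supseteq A_1\supseteq\cdots\supseteq A_{m-1}$ (and equality within each height class), which the paper already obtains for $n\geq 2m$.

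The hard direction $A_k\subseteq A_{k+1}$ is precisely where $n\geq 3m$ enters, and it is \emph{not} a parking/connectivity issue. The paper's device is a \emph{double third-type commutation}: from $t_{I,J}(\xi)\in H$ with $i\in I\setminus J$, $j\in J\setminus I$ one gets a product $t_{\tilde I,J}(\pm\zeta\xi)\,t_{I,\tilde J}(\pm\zeta\xi)\,t_{\tilde I,\tilde J}(\pm\zeta^2\xi)$; the last factor has height $k$ and argument in the ideal $A_k$, so it already lies in $H$ and can be stripped off, leaving a product of two height-$(k{+}1)$ transvections in $H$. One manufactures a second such product from a different starting pair and then commutes the two products; with indices chosen carefully (this is where $n\geq 3m-2k$ is used, to make the two configurations sufficiently disjoint), the commutator $[ab,cd]$ collapses to a single height-$(k{+}1)$ transvection with argument $\pm\xi^2\zeta\zeta_1$, giving $A_k^2\subseteq A_{k+1}$ and hence $A_k\subseteq A_{k+1}$. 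Your write-up needs this mechanism; the role you assign to $n\geq 3m$ (free indices for type-2 slides) is not the actual obstruction.
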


In the case $\frac{n}{3} \leq m \leq n$ description of overgroups cannot be done by a parametrization only by a single ideal. Moreover, as it could be seen from further calculations we need up to $m$ ideals in some cases for a complete parametrization of overgroups. There are a lot of nontrivial relationships between the ideals. So even the notion of a relative elementary group is far more complicated and depends on a Chevalley group (for instance, see~\cite{AnaVavSinI}), let alone formulations of the Main Theorems. The authors work in this direction and hope this problem would be solved in the near future. In a general case this [partially ordered] set of ideals forms \textit{a net of ideals} (due to Zenon Borevich; for a definition see \cite{BVnets}, for further progress in the direction of subgroup classification see \cite{BVdet, Borevich1984}).

Back to the case $n \geq 3m$, the set $A:= A_{I,J}$ is called a \textit{level of an overgroup} $H$. The description of overgroups goes as follows.

\begin{restatable}[Level computation]{theorem}{LevelForm}
\label{thm:LevelForm}
Let $R$ be a commutative ring and $n$, $m$ be natural numbers with the constraint $n\geq 3m$. For an arbitrary overgroup $H$ of the group $\bw{m}\E_n(R)$ there exists a unique maximal ideal $A$ of the ring $R$ such that 
$$\bw{m}\E_n(R)\cdot\E_N(R,A)\leq H.$$
Namely, if a transvection $t_{I,J}(\xi)$ belongs to the group $H$, then $\xi\in A$.
\end{restatable}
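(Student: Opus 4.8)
The plan is to build the ideal $A$ directly from the overgroup $H$ and then verify the two claims: that $A$ is an ideal making $\bw{m}\E_n(R)\cdot\E_N(R,A)\leq H$, and that it is the \emph{unique maximal} such ideal in the strong pointwise sense (any transvection $t_{I,J}(\xi)\in H$ forces $\xi\in A$). By Proposition~\ref{prop:InversInclus} the sets $A_{I,J}$ all coincide for $n\geq 3m$, so define $A:=A_{I,J}$ for any fixed pair of distinct weights $I\neq J$. The first task is to show $A$ is an additive subgroup of $R$: since $\bw{m}\E_n(R)\leq H$ normalizes $H$ and conjugating $t_{I,J}(\xi)$ by suitable exterior-power elementary generators produces $t_{I,J}(\xi')$ for related $\xi'$, together with the obvious identity $t_{I,J}(\xi)t_{I,J}(\eta)=t_{I,J}(\xi+\eta)$ inside the unipotent root subgroup, closure under addition follows once one checks $0,-\xi\in A$. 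The genuinely substantive point is closure under multiplication by arbitrary ring elements, i.e. that $A$ is a two-sided (hence, $R$ being commutative, ordinary) ideal: here one exploits that $\bw{m}\E_n(R)$ contains enough elements to ``move'' a transvection $t_{I,J}(\xi)$ and extract, via commutator identities of Chevalley type in the fundamental representation, transvections $t_{I',J'}(r\xi)$ for the desired $r\in R$ and a new pair $I'\neq J'$; by Proposition~\ref{prop:InversInclus} again this lands back in $A$. This is exactly the kind of computation the paper defers to \S\ref{subsec:square} and \S\ref{subsec:general}, and I expect the multiplicativity/ideal property to be the main obstacle, since it requires a careful analysis of which products $e_i\wedge$ (weights) interact under the action of the elementary generators of $\bw{m}\E_n(R)$ — precisely where the hypothesis $n\geq 3m$ is used to guarantee enough ``room'' among the weights.

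Granting that $A$ is an ideal, the inclusion $\bw{m}\E_n(R)\cdot\E_N(R,A)\leq H$ is obtained as follows. By definition $t_{I,J}(\xi)\in H$ for every $\xi\in A$ and every pair $I\neq J$ (using that all the $A_{I,J}$ equal $A$). The relative elementary group $\E_N(R,A)$ is generated by $\E_N(A)^{\E_N(R)}$, i.e. by $\E_N(R)$-conjugates of elementary transvections with entries in $A$. So it suffices to show that $H$ contains all such conjugates. But $H\supseteq\bw{m}\E_n(R)$, and while $\bw{m}\E_n(R)$ is \emph{not} all of $\E_N(R)$, one shows that conjugation of a transvection $t_{I,J}(\xi)$, $\xi\in A$, by an element of $\bw{m}\E_n(R)$ stays in $H$ trivially (as $H$ is a group containing both), and moreover that $\E_N(A)$ itself is generated by such transvections $t_{I,J}(\xi)$ with $\xi\in A$ together with their $\bw{m}\E_n(R)$-conjugates — here one invokes the standard fact (or a lemma proved earlier in the paper) that the root subgroups $t_{I,J}(R)$ for all $I\neq J$ together generate $\E_N(R)$, and that the level $A$ being an ideal makes $\langle t_{I,J}(A)\mid I\neq J\rangle^{\E_N(R)}=\E_N(R,A)$ by a Suslin–Tulenbaev–type normality argument. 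Thus $\bw{m}\E_n(R)\cdot\E_N(R,A)\leq H$.

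Finally, for uniqueness and maximality: suppose $B\trianglelefteq R$ is any ideal with $\bw{m}\E_n(R)\cdot\E_N(R,B)\leq H$. Then in particular $t_{I,J}(\xi)\in\E_N(R,B)\leq H$ for every $\xi\in B$, whence $B\subseteq A_{I,J}=A$ by the very definition of $A_{I,J}$. This shows $A$ contains every such $B$, so it is the unique maximal one. The last sentence of the statement — if $t_{I,J}(\xi)\in H$ then $\xi\in A$ — is immediate from $A=A_{I,J}$ and the definition of $A_{I,J}$, and it is this pointwise characterization (rather than mere maximality among ideals) that does the work in the subsequent parts of the program. In summary, modulo the two cited propositions the proof reduces to (i) the ideal property of $A$, which is the technical heart and where $n\geq 3m$ enters, and (ii) a routine normality/generation argument identifying $\E_N(R,A)$ as generated by the transvections it must contain.
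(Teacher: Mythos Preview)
Your outline matches the paper's: define $A:=A_{I,J}$ (common value by Proposition~\ref{prop:InversInclus}), check it is an ideal, deduce $\bw{m}\E_n(R)\cdot\E_N(R,A)\leq H$, and read off maximality from the definition of $A_{I,J}$. The gap is in your step~(ii), which you call ``a routine normality/generation argument.'' What is actually needed there is the equality
\[
\E_N(A)^{\bw{m}\E_n(R)}=\E_N(R,A),
\]
which is Lemma~\ref{lem:AlterRelatForm} in the paper and is \emph{not} routine. You have $\E_N(A)\leq H$ and $\bw{m}\E_n(R)\leq H$, hence $\E_N(A)^{\bw{m}\E_n(R)}\leq H$; but $\E_N(R,A)=\E_N(A)^{\E_N(R)}$ with $\E_N(R)\not\leq H$ in general, so you must prove that normalizing by the \emph{smaller} group $\bw{m}\E_n(R)$ already gives the full relative elementary group. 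Your paragraph on this point is muddled: you write that ``$\E_N(A)$ itself is generated by \dots\ their $\bw{m}\E_n(R)$-conjugates'' (you mean $\E_N(R,A)$, not $\E_N(A)$), and the ``standard fact'' you then invoke, $\langle t_{I,J}(A)\rangle^{\E_N(R)}=\E_N(R,A)$, is simply the definition of $\E_N(R,A)$ and does no work toward the inclusion in $H$. The paper proves Lemma~\ref{lem:AlterRelatForm} by induction on $\height(I,J)$: using the Vaserstein--Suslin generators $z_{I,J}(\xi,\zeta)=t_{J,I}(\zeta)t_{I,J}(\xi)t_{J,I}(-\zeta)$, one writes $t_{I,J}(\xi)$ as a commutator $[t_{I,V}(\xi),t_{V,J}(1)]$ with $\height(I,V),\height(V,J)>\height(I,J)$, conjugates through, and expands $[ab,cd]$ into pieces that land in $\E_N(A)^{\bw{m}\E_n(R)}$ either by induction or because the outer conjugation happens to be by an element of $\bw{m}\E_n(R)$.

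A secondary correction: you locate the hypothesis $n\geq 3m$ in the ideal property of $A$, but in the paper that property is cheap (Proposition~\ref{prop:LevelsEqualGeneral} ends with $t_{I,J}(\xi\zeta)=[t_{I,J}(\xi),\bw{m}t_{j,i}(\zeta),\bw{m}t_{i,j}(\pm 1)]$) and needs no bound on $n$. The constraint $n\geq 3m$ is used only to force the ideals $A_0\geq A_1\geq\cdots\geq A_{m-1}$ indexed by height to coincide (Proposition~\ref{prop:IdealEqual}), via a double third-type commutation that requires enough disjoint indices. So in your summary the roles of (i) and (ii) are reversed: the delicate combinatorics with the $3m$ bound lives in the ``all $A_{I,J}$ coincide'' step, while the substantial group-theoretic lemma you are missing is the one identifying $\E_N(R,A)$ with the $\bw{m}\E_n(R)$-normal closure of $\E_N(A)$.
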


The left--hand side subgroup is denoted by $\E\bw{m}\E_n(R,A)$. We note that this group is perfect (Lemma~\ref{lem:PerfectForm}). Motivated by the expected relations $(*)$, we present an alternative description of the normalizer $N_{\GL_{N}(R)}\bigl(\E\bw{m}\E_n(R,A)\bigr)$. 

For this we introduce the canonical projection $\rho_A: R\longrightarrow R/A$ mapping $\lambda\in R$ to $\bar{\lambda}=\lambda+A\in R/A$. Applying the projection to all entries of a matrix, we get the reduction homomorphism 
$$\begin{array}{rcl}
\rho_{A}:\GL_{n}(R)&\longrightarrow& \GL_{n}(R/A)\\
a &\mapsto& \overline{a}=(\overline{a}_{i,j})
\end{array}$$

Eventually, we have the following explicit \textit{congruence} description.

\begin{restatable}[Level reduction]{theorem}{LevelReduction}
\label{thm:LevelReduction}
Let $n\geq 3m$.
For any ideal $A\trianglelefteq R$, we have
$$N_{\GL_{N}(R)}\bigl(\E\bw{m}\E_n(R,A)\bigr)=\rho_{A}^{-1}\left(\bw{m}\GL_{n}(R/A)\right).$$
\end{restatable}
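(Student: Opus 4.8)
The plan is to prove the equality $N_{\GL_{N}(R)}\bigl(\E\bw{m}\E_n(R,A)\bigr)=\rho_{A}^{-1}\bigl(\bw{m}\GL_{n}(R/A)\bigr)$ by showing the two inclusions separately, reducing everything to the already-established Level Computation theorem (Theorem~\ref{thm:LevelForm}).

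\medskip

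\textbf{The easy inclusion.} First I would establish $\rho_{A}^{-1}\bigl(\bw{m}\GL_{n}(R/A)\bigr)\subseteq N_{\GL_{N}(R)}\bigl(\E\bw{m}\E_n(R,A)\bigr)$. Take $g\in\GL_N(R)$ with $\rho_A(g)\in\bw{m}\GL_n(R/A)$, so $\rho_A(g)=\bw{m}\bar h$ for some $\bar h\in\GL_n(R/A)$. I must check that conjugation by $g$ preserves $\E\bw{m}\E_n(R,A)=\bw{m}\E_n(R)\cdot\E_N(R,A)$. It suffices to check this on generators. Conjugation by $g$ of an element $\bw{m}x$, $x\in\E_n(R)$, equals $\bw{m}x$ modulo $A$ composed with a correction: writing $g=\tilde h\cdot u$ where $\tilde h\in\bw{m}\GL_n(R)$ is a lift of $\bw{m}\bar h$ and $u\in\GL_N(R,A)$ is congruent to the identity mod $A$, one has $g(\bw{m}x)g^{-1}=\tilde h\,u\,(\bw{m}x)\,u^{-1}\tilde h^{-1}$; since $\tilde h$ normalizes $\bw{m}\GL_n(R)\supseteq\bw{m}\E_n(R)$ up to elements landing in the full $\bw{m}\GL_n(R)$ — here I would rather argue that $[\bw{m}x,\text{(anything congruent to }1\bmod A)]\in\GL_N(R,A)$ and that $\GL_N(R,A)$ normalized by $\bw{m}\GL_n(R)$ lands back in $\E\bw{m}\E_n(R,A)$ after using standard commutator formulas and the fact that relative elementary subgroups are normal. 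The cleanest route is: $\E_N(R,A)$ is normal in $\GL_N(R)$ when $A$ is, no — that is false in general — so instead use that $g$ normalizes $\E_N(R,A)=\E_N(A)^{\E_N(R)}$ because $\rho_A(g)$ centralizes $\rho_A(\E_N(R,A))=1$; more precisely $g\E_N(R,A)g^{-1}\subseteq\GL_N(R,A)\cap\langle\text{stuff}\rangle$, and for $\bw{m}\E_n(R)$ use that $\tilde h$ conjugates $\bw{m}\E_n(R)$ into $\bw{m}\GL_n(R)$, with the discrepancy from $\bw{m}\E_n(R)$ absorbed by the mod-$A$ congruence. I will structure this using the short exact sequence $1\to\GL_N(R,A)\to\rho_A^{-1}(\bw{m}\GL_n(R/A))\to\bw{m}\GL_n(R/A)\to 1$.

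\medskip

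\textbf{The hard inclusion.} The substantive direction is $N_{\GL_{N}(R)}\bigl(\E\bw{m}\E_n(R,A)\bigr)\subseteq\rho_{A}^{-1}\bigl(\bw{m}\GL_{n}(R/A)\bigr)$. Let $g$ normalize $\E\bw{m}\E_n(R,A)$. I need to show $\bar g:=\rho_A(g)$ lies in $\bw{m}\GL_n(R/A)$. Pass to the quotient ring $\bar R=R/A$: since $g\E\bw{m}\E_n(R,A)g^{-1}=\E\bw{m}\E_n(R,A)$ and reduction mod $A$ kills $\E_N(R,A)$, the image $\bar g$ normalizes $\bw{m}\E_n(\bar R)=\overline{\E\bw{m}\E_n(R,A)}$ inside $\GL_N(\bar R)$. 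So without loss of generality $A=0$ and I must prove $N_{\GL_N(R)}\bigl(\bw{m}\E_n(R)\bigr)\subseteq\bw{m}\GL_n(R)$ — that is, the normalizer of the exterior power of the elementary group is no bigger than the exterior power of the linear group. Here is where I invoke Theorem~\ref{thm:LevelForm}: consider the overgroup $H=\langle g\,\bw{m}\E_n(R)\,g^{-1},\ \bw{m}\E_n(R)\rangle=\langle\bw{m}\E_n(R),g\rangle$. Actually cleaner: set $H$ to be the subgroup generated by $\bw{m}\E_n(R)$ and $g\bw{m}\E_n(R)g^{-1}$; but since $g$ normalizes $\bw{m}\E_n(R)$ this is just $\langle\bw{m}\E_n(R),g\rangle$, which need not sit in $\GL_N(R)$ in a controlled way. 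The right move is different: for each pair $I\ne J$ and each $\xi$, the transvection $gt_{I,J}(\xi)g^{-1}$ lies in $\bw{m}\E_n(R)$, hence the entries of $g$ are constrained. Concretely I would argue that $g$, conjugating all root elements of $\bw{m}\E_n(R)$ back into $\bw{m}\E_n(R)$, must respect the geometry of the fundamental-weight representation — the decomposable vectors / the variety of weight lines — and this forces $\bar g$ (already over the ring with $A=0$) to come from $\GL_n$. This is essentially the statement that the stabilizer in $\GL_N$ of the Plücker/Grassmann structure is $\bw{m}\GL_n$ together with a normalizer argument, combined with the fact that $\bw{m}\E_n(R)$ acts transitively enough on that structure. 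I expect this to be where the real work lies, and the paper likely has already done the geometric bookkeeping in the proof of Theorem~\ref{thm:LevelForm}; if so, I would extract from that proof the statement that conjugation by any normalizing $g$ preserves the set of ``elementary transvections of $\bw{m}\E_n$'' and hence preserves the associated linear data on $R^n$ up to the $\GL_n$-action.

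\medskip

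\textbf{Main obstacle.} The crux is the second inclusion in the base case $A=0$: showing $N_{\GL_N(R)}(\bw{m}\E_n(R))\subseteq\bw{m}\GL_n(R)$. Over a field this is classical (stabilizer of the Grassmannian/Plücker ideal), but over an arbitrary commutative ring with $2$ invertible one must argue ring-theoretically — localize at primes, or directly manipulate the matrix entries of $g$ using the conjugation relations $g\,\bw{m}x_{i,j}(\xi)\,g^{-1}\in\bw{m}\E_n(R)$ for all elementary $x_{i,j}(\xi)$ — to recover a matrix $h\in\GL_n(R)$ with $g=\bw{m}h$. I would try to reduce to the local case via a standard localization-completion or a direct descent argument, using that $\bw{m}\GL_n$ is a closed subscheme and that the condition ``$\bar g$ normalizes $\bw{m}\E_n$'' is detected after localization; then over local rings use the explicit structure of decomposable vectors. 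The delicate point requiring care is the constraint $n\ge 3m$, which is exactly what makes the level well-defined (Proposition~\ref{prop:InversInclus}) and should also be what guarantees that the geometric argument does not degenerate — I would make sure every application of Theorem~\ref{thm:LevelForm} and of the transitivity of the action uses $n\ge 3m$ explicitly.
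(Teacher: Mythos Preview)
Your proposal has the overall two-inclusion structure right, and your reduction of the inclusion $N\subseteq\rho_A^{-1}(\bw{m}\GL_n(R/A))$ to the case $A=0$ is exactly what the paper does: functoriality of the normalizer gives $N\bigl(\E\bw{m}\E_n(R,A)\cdot\GL_N(R,A)\bigr)=\rho_A^{-1}\bigl(N(\bw{m}\E_n(R/A))\bigr)$, and then one invokes $N(\bw{m}\E_n(R/A))=\bw{m}\GL_n(R/A)$. The paper treats this last equality as an input (to be established via invariant forms, as the introduction indicates), so you are right that this is where the deeper geometric work lies --- but it is not what the present theorem is meant to supply.

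The genuine gap in your proposal is your ``easy inclusion'' $\rho_A^{-1}(\bw{m}\GL_n(R/A))\subseteq N(\E\bw{m}\E_n(R,A))$, which in the paper is in fact the substantive part. Your attempted decomposition $g=\tilde h\cdot u$ with $\tilde h\in\bw{m}\GL_n(R)$ lifting $\bw{m}\bar h$ fails at the outset: such a lift need not exist, because $\bw{m}\GL_n(R/A)$ is the group of $R/A$-points of the scheme-theoretic image and its elements need not come from $\GL_n(R/A)$, let alone lift to $\bw{m}\GL_n(R)$ (the paper explicitly stresses $\bw{m}(\GL_n(R))\lneq\bw{m}\GL_n(R)$). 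Your aside that $\E_N(R,A)$ might not be normal in $\GL_N(R)$ is also wrong --- it is normal by Suslin's theorem for $N\geq 3$, as the paper recalls --- so that part of your sketch collapses in confusion rather than converging on an argument.

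What you are missing is the Hall--Witt/perfectness trick. The paper first establishes a chain of commutator inclusions, in particular
\[
[\CC\bw{m}\GL_n(R,A),\,\E\bw{m}\E_n(R,A)] \leq \E\bw{m}\E_n(R,A)\cdot\GL_N(R,A),
\qquad
[\bw{m}\GL_n(R)\cdot\GL_N(R,A),\,\E\bw{m}\E_n(R,A)] \leq \E\bw{m}\E_n(R,A),
\]
together with the corresponding triple-commutator versions. Then, since $\E\bw{m}\E_n(R,A)$ is \emph{perfect} (Lemma~\ref{lem:PerfectForm}), to show that $z\in\CC\bw{m}\GL_n(R,A)$ normalises it, it suffices to show $[z,[x,y]]\in\E\bw{m}\E_n(R,A)$ for all $x,y$ in that group; this is unpacked via the Hall--Witt identity, and each resulting factor falls into $\E\bw{m}\E_n(R,A)$ by the commutator inclusions above. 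Your proposal never invokes perfectness or Hall--Witt, and without them the inclusion cannot be closed: the whole point is that one never needs to lift $z$ into $\bw{m}\GL_n(R)\cdot\GL_N(R,A)$, only to control iterated commutators of $z$ with elements of the perfect group.
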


\section{Principal notation}\label{sec:princnot}

Our notation for the most part is fairly standard in Chevalley group theory. We recall all necessary notion below for the purpose of self-containment.

First, let $G$ be a group. By a commutator of two elements we always mean \textit{the left-normed} commutator $[x,y]=xyx^{-1}y^{-1}$, where $x,y\in G$. Multiple commutators are also left-normed; in particular, $[x,y,z]=[[x,y],z]$. By ${}^xy=xyx^{-1}$ we denote \textit{the left conjugates} of $y$ by $x$. Similarly, by $y^x=x^{-1}yx$ we denote \textit{the right conjugates} of $y$ by $x$. In the sequel, we will use the Hall–Witt identity:
$$[x,y^{-1},z^{-1}]^x\cdot[z,x^{-1},y^{-1}]^z\cdot[y,z^{-1},x^{-1}]^y=e.$$

For a subset $X\subseteq G$, we denote by $\langle X\rangle$ a subgroup it generates. The notation $H\leq G$ means that $H$ is a subgroup in $G$, while the notation $H\trianglelefteq G$ means that $H$ is a normal subgroup in $G$. For $H\leq G$, we denote by $\langle X\rangle^H$ the smallest subgroup in $G$ containing $X$ and normalized by $H$. For two groups $F,H\leq G$, we denote by $[F,H]$ their mutual commutator: $[F,H]=\langle [f,g] \text{ for } f\in F, h\in H\rangle.$

Also we need some elementary ring theory notation. Let $R$ be an associative ring with $1$. By default, it is assumed to be commutative. By an ideal $I$ of a ring $R$ we understand \textit{the two-sided ideal} and this is denoted by $I\trianglelefteq R$. As usual, $R^{*}$ denotes a multiplicative group of a ring $R$. A multiplicative group of matrices over a ring $R$ is called a general linear group and is denoted by $\GL_n(R)=\M_n(R)^{*}$. A special linear group $\SL_{n}(R)$ is a subgroup of $\GL_n(R)$ consisting of matrices of determinant $1$. By $a_{i,j}$ we denote an entry of a matrix $a$ at the position $(i,j)$, where $1\leq i,j\leq n$. Further, $e$ denotes the identity matrix and $e_{i,j}$ denotes the standard matrix unit, i.\,e., the matrix that has $1$ at the position $(i,j)$ and zeros elsewhere. For entries of the inverse matrix we will use the standard notation $a_{i,j}':=(a^{-1})_{i,j}$.

By $t_{i,j}(\xi)$ we denote an elementary transvection, i.\,e., a matrix of the form $t_{i,j}(\xi)=e+\xi e_{i,j}$, $1\leq i\neq j\leq n$, $\xi\in R$. Hereinafter, we use (without any references) standard relations~\cite{StepVavDecomp} among elementary transvections such as
\begin{enumerate}
\item additivity:
$$t_{i,j}(\xi)t_{i,j}(\zeta)=t_{i,j}(\xi+\zeta).$$
\item the Chevalley commutator formula:
$$[t_{i,j}(\xi),t_{h,k}(\zeta)]=
\begin{cases}
e,& \text{ if } j\neq h, i\neq k,\\
t_{i,k}(\xi\zeta),& \text{ if } j=h, i\neq k,\\
t_{h,j}(-\zeta\xi),& \text{ if } j\neq h, i=k.
\end{cases}$$
\end{enumerate}

A subgroup $\E_n(R)\leq \GL_n(R)$ generated by all elementary transvections is called an \textit{$($absolute$)$ elementary group}:
$$\E_n(R)=\langle t_{i,j}(\xi), 1\leq i\neq j\leq n, \xi\in R\rangle.$$
Now define a normal subgroup of $\E_n(R)$, which plays a crucial role in calculating the level of intermediate subgroups. Let $I$ be an ideal in $R$. Consider a subgroup $\E_n(R,I)$ generated by all elementary transvections of level $I$, i.\,e., $\E_n(R,I)$ is a normal closure of $\E_n(I)$ in $\E_n(R)$. This group is called an \textit{$($relative$)$ elementary group of level $I$}:
$$\E_n(R,I)=\langle t_{i,j}(\xi), 1\leq i\neq j\leq n, \xi\in I\rangle^{\E_n(R)}.$$
It is well known (due to Andrei Suslin~\cite{SuslinSerreConj}) that the elementary group is normal in the general linear group $\GL_{n}(R)$ for $n \geq 3$. The normality is crucial for further considerations, so hereafter we suppose that $n \geq 3$. Furthermore, the relative elementary group $\E_n(R,I)$ is normal in $\GL_{n}(R)$ if $n\geq 3$. This fact, first proved in \cite{SuslinSerreConj}, is cited as Suslin’s theorem. Moreover, if $n\geq 3$, then the group $\E_n(R,I)$ is generated by transvections of the form $z_{i,j}(\xi,\zeta)=t_{j,i}(\zeta)t_{i,j}(\xi)t_{j,i}(-\zeta)$, $1\leq i\neq j\leq n$, $\xi\in I$, $\zeta\in R$. This fact was proved by Leonid~Vaserstein and Andrey~Suslin~\cite{VasSusSerre} and, in the context of Chevalley groups, by Jacques~Tits~\cite{TitsCongruence}.

By $[n]$ we denote the set $\{1,2,\dots, n\}$ and by $\bw{m}[n]$ we denote an exterior power of the set $[n]$. Elements of $\bw{m}[n]$ are ordered subsets $I\subseteq [n]$ of cardinality $m$ without repeating entries:
$$\bw{m}[n] = \{ (i_{1}, i_{2}, \dots , i_{m})\; |\; i_{j} \in [n], i_{j} \neq i_{l} \}.$$
We use the lexicographic order on $\bw{m}[n]$ by default: $12\dots (m-1)m < 12\dots (m-1)(m+1) < \dots$

Usually, we write an index $I=\{i_j\}_{j=1}^m$ in the ascending order, $i_1<i_2<\dots<i_m$. Sign $\sign(I)$ of the index $I = (i_1, \dots, i_m)$ equals the sign of the permutation mapping $(i_1, \dots, i_m)$ to the same set in the ascending order. For example, $\sign(1234) = \sign(1342) = +1$, but $\sign(1324) = \sign(4123) = -1$.

Finally, let $n\geq 3$ and $m\leq n$. \textit{By $N$ we denote the binomial coefficient $\binom{n}{m}$}. In the sequel, we denote an elementary transvection in $\E_N(R)$ by $t_{I,J}(\xi)$ for $I,J \in \bw{m}[n]$ and $\xi \in R$. For instance, the transvection $t_{12,13}(\xi)$ equals the matrix with $1$'s on the diagonal and $\xi$ in the position $(12,13)$.

\section{Proofs \& Computations}\label{sec:proofs}

We consider a case of an exterior square of a group scheme $\GL_{n}$ at first. We have two reasons for this way of presentation. Firstly, proofs of statements in a general case belong to the type of technically overloaded statements. At the same time, simpler proofs in the basic case present all ideas necessary for a general case. In particular, for $n=4$ Nikolai Vavilov and Victor Petrov completed the standard description of overgroups\footnote{The restriction of the exterior square map $\bw{2}: \GL_{4}(R) \longrightarrow \GL_{6}(R)$ to the group $\E_4(R)$ is an isomorphism onto the elementary orthogonal group $\EO_6(R)$~\cite{VP-EOeven}.}. Secondly, for exterior squares there are several important results that cannot be obtained for the exterior cube or other powers, see~\cite{LubNek18,LubReverse2,LubReversemArx}. For example, in~\cite{LubReverse2} the author construct a transvection $T\in \bw{2}\E_n(R)$ such that it stabilizes an arbitrary column of a matrix $g$ in $\GL_{\binom{n}{2}}(R)$. And there are no such transvections for other exterior powers.

\subsection{Exterior square of elementary groups}\label{subsec:square}

Let $R$ be a commutative ring with $1$, $n$ be a natural number greater than $3$, and $R^n$ be a right free $R$-module with the standard basis $\{e_1,\dots,e_n\}$. By $\bw{2}R^n$ we denote a universal object in the category of alternating bilinear maps from $R^n$ to $R$-modules. Concretely, take a free module of rank $N=\binom{n}{2}$ with the basis $e_i\wedge e_j$, $1~\leq~i~\neq~j~\leq~n$. The elements $e_i\wedge e_j$ for arbitrary $1\leq i,j\leq n$ are defined by the relation $e_{i} \wedge e_{j} = -e_{j} \wedge e_{i}$.

An action of the group $\GL_{n}(R)$ on the module $\bw{2}R^n$ is diagonal:
$$\bw{2}(g)(e_i\wedge e_j):=(ge_i)\wedge(ge_j) \text{ for any } g\in \GL_{n}(R) \text{ and } 1\leq i\neq j\leq n.$$
In the basis $\{e_I, I\in\bw{2}[n]\}$ of the module $\bw{2}R^n$ a matrix $\bw{2}(g)$ consists of second order minors of the matrix $g$ with lexicographically ordered columns and rows:
$$\left(\bw{2}(g)\right)_{I,J}=\left(\bw{2}(g)\right)_{(i_1,i_2),(j_1,j_2)}=M_{i_1,i_2}^{j_1,j_2}(x) = g_{i_1, j_1}\cdot g_{i_2, j_2} - g_{i_1, j_2}\cdot g_{i_2, j_1}.$$

By the Cauchy--Binet theorem the map $\pi\colon\GL_{n}(R)\longrightarrow \GL_{N}(R)$, $x\mapsto \bw{2}(x)$ is a homomorphism. Thus the map $\pi$ is a representation of the group $\GL_n(R)$, called the \textit{bivector representation} or the \textit{second fundamental representation} (\textit{the representation with the highest weight} $\varpi_2$). The image of the latter action is called the exterior square of the group $\GL_n(R)$. $\E_n(R)$ is a subgroup of $\GL_n(R)$, therefore the exterior square of the elementary group is well defined. The following lemma is a corollary of Suslin's theorem.
\begin{lemma}\label{SuslinFor2}
The image of an elementary group is normal in the image of a general linear group under the exterior square homomorphism:
$$\bw{2}\bigl(\E_n(R)\bigr)\trianglelefteq \bw{2}\bigl(\GL_{n}(R)\bigr).$$
\end{lemma}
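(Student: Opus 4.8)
The plan is to deduce this directly from Suslin's normality theorem, which is cited earlier in the excerpt as the statement that $\E_n(R)\trianglelefteq\GL_n(R)$ for $n\geq 3$. First I would recall that the exterior square map $\pi=\bw{2}\colon\GL_n(R)\to\GL_N(R)$ is a group homomorphism — this is exactly the content of the Cauchy--Binet theorem, already noted above. A group homomorphism carries a normal subgroup onto a normal subgroup of its image: if $F\trianglelefteq G$ then $\pi(F)\trianglelefteq\pi(G)$, since for $g\in G$ and $f\in F$ we have ${}^{\pi(g)}\pi(f)=\pi({}^gf)\in\pi(F)$, and $\pi(F)$ is visibly a subgroup. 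Applying this with $G=\GL_n(R)$, $F=\E_n(R)$, and $n\geq 3$ (which is assumed throughout, as stated in the Principal notation section) yields $\bw{2}(\E_n(R))\trianglelefteq\bw{2}(\GL_n(R))$, which is the assertion.

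There is essentially no obstacle here: the only nontrivial input is Suslin's theorem, and the rest is the elementary observation that homomorphic images of normal subgroups are normal in the image. I would therefore present the proof in two short sentences — cite normality of $\E_n(R)$ in $\GL_n(R)$, then invoke functoriality of $\bw{2}$ as a homomorphism — and, if desired, spell out the one-line conjugation computation $\bw{2}(g)\,\bw{2}(\varepsilon)\,\bw{2}(g)^{-1}=\bw{2}(g\varepsilon g^{-1})$ for $g\in\GL_n(R)$, $\varepsilon\in\E_n(R)$, noting that $g\varepsilon g^{-1}\in\E_n(R)$ by Suslin. The remark worth making explicit is only that this is the reason the hypothesis $n\geq 3$ is in force; for $n\leq 2$ the elementary group need not be normal and the lemma would fail.
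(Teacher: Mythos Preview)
Your proposal is correct and matches the paper's treatment exactly: the paper states this lemma as an immediate corollary of Suslin's theorem and gives no further proof, and your argument (normality of $\E_n(R)$ in $\GL_n(R)$ plus the trivial fact that a group homomorphism sends normal subgroups to normal subgroups of the image) is precisely the intended one-line justification.
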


Note that $\bw{2}\bigl(\GL_{n}(R)\bigr)$ does not equal $\bw{2}\GL_n(R)$ for arbitrary rings. For detail see the extended description in \S\ref{subsec:general}.

Let us consider a structure of the group $\bw{2}\E_n(R)$ in detail. The following proposition can be extracted from the very definition of $\bw{2}\big(\GL_{n}(R)\big)$.
\begin{proposition}\label{prop:ImageOfTransvFor2}
Let $t_{i,j}(\xi)$ be an elementary transvection. For $n\geq 3$, $\bw{2}t_{i,j}(\xi)$ can be presented as the following product:
\begin{equation}
\bw{2}t_{i,j}(\xi)=\prod\limits_{k=1}^{i-1} t_{ki,kj}(\xi)\,\cdot\prod\limits_{l=i+1}^{j-1}t_{il,lj}(-\xi)\,\cdot\prod\limits_{m=j+1}^n t_{im,jm}(\xi)
\label{eq:def2}
\end{equation}
for any $1\leq i<j\leq n$.
\end{proposition}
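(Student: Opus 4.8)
The plan is a direct computation of the $2\times 2$ minors of the transvection $t_{i,j}(\xi)=e+\xi e_{i,j}$. By the minor formula recalled above, for unequal weights $I=(a,b)$ and $J=(c,d)$ in $\bw{2}[n]$ written in ascending order, the $(I,J)$-entry of $\bw{2}\bigl(t_{i,j}(\xi)\bigr)$ equals $M^{c,d}_{a,b}\bigl(t_{i,j}(\xi)\bigr)=(t_{i,j}(\xi))_{a,c}(t_{i,j}(\xi))_{b,d}-(t_{i,j}(\xi))_{a,d}(t_{i,j}(\xi))_{b,c}$, and since $(t_{i,j}(\xi))_{p,q}=\delta_{p,q}+\xi\,\delta_{p,i}\delta_{q,j}$ this is a polynomial of degree at most $2$ in $\xi$. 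First I would note that the quadratic term vanishes identically: a nonzero quadratic term would force $a=b=i$ or $c=d=j$, impossible for a weight. Hence every entry is affine in $\xi$.

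Next I would sort out the affine terms using $a<b$, $c<d$, $i<j$. The constant term is $\delta_{a,c}\delta_{b,d}-\delta_{a,d}\delta_{b,c}$; the second summand would force $a=d$ and $b=c$, whence $b=c<d=a<b$, a contradiction, so the constant term is simply $\delta_{I,J}$ and contributes the identity matrix. Expanding the linear-in-$\xi$ coefficient $\delta_{a,c}\delta_{b,i}\delta_{d,j}+\delta_{a,i}\delta_{c,j}\delta_{b,d}-\delta_{a,d}\delta_{b,i}\delta_{c,j}-\delta_{a,i}\delta_{d,j}\delta_{b,c}$ and imposing the ordering, exactly three of the four products survive: $a=c$, $b=i$, $d=j$ (coefficient $+\xi$), producing the entry at $\bigl((k,i),(k,j)\bigr)$ with $k=a<i$; $a=i$, $c=j$, $b=d$ (coefficient $+\xi$), producing $\bigl((i,m),(j,m)\bigr)$ with $m=b>j$; and $a=i$, $d=j$, $b=c$ (coefficient $-\xi$), producing $\bigl((i,l),(l,j)\bigr)$ with $i<l<j$. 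The remaining product, $\delta_{a,d}\delta_{b,i}\delta_{c,j}$, forces $j<a<i$ and is therefore void. These are precisely the three families of off-diagonal positions appearing in~\eqref{eq:def2}, with the stated signs.

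It then remains to pass from this entrywise description to the product form, i.e.\ to check that $\prod_s t_{I_s,J_s}(\xi_s)=e+\sum_s\xi_s e_{I_s,J_s}$ for the transvections on the right-hand side. For this I would observe that every column weight that occurs — each of $(k,j)$, $(l,j)$, $(j,m)$ — contains the index $j$, whereas no row weight that occurs — $(k,i)$, $(i,l)$, $(i,m)$ — contains $j$ (here one uses $k<i<j$, $l<j$, $m>j$). Consequently $J_s\neq I_t$ for any two of the matrix units in play, so $e_{I_s,J_s}e_{I_t,J_t}=0$; together with $e_{I_s,J_s}^2=0$ (as $I_s\neq J_s$) this yields the product formula in any order, proving~\eqref{eq:def2}.

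The computation is elementary; the only mild subtleties are the bookkeeping of the ascending-order convention when discarding the impossible index combinations, and the ``disjointness via $j$'' observation needed to rewrite the sum of matrix units as an ordered product of transvections. One could alternatively deduce the statement from $e_{i,j}^2=0$ together with the behaviour of the representation differential under exponentiation, but the direct minor computation is shorter and fully self-contained.
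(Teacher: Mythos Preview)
Your proof is correct and follows essentially the same approach the paper intends: the paper does not give a proof but merely remarks that the proposition ``can be extracted from the very definition of $\bw{2}\bigl(\GL_n(R)\bigr)$,'' and your direct minor computation is precisely that extraction carried out in full. The only addition is your explicit justification, via the ``every column weight contains $j$ but no row weight does'' observation, that the commuting transvections can be assembled into a product; this is a detail the paper leaves implicit.
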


\begin{remark}
For $i>j$ a similar equality holds:
$$\bw{2}t_{i,j}(\xi)=\prod\limits_{k=1}^{j-1} t_{ki,kj}(\xi)\,\cdot\prod\limits_{l=j+1}^{i-1}t_{li,jl}(-\xi)\,\cdot\prod\limits_{m=i+1}^n t_{im,jm}(\xi).$$
\end{remark}

Likewise, one can get an explicit form of torus elements $h_{\varpi_2}(\xi)$ of the group $\bw{2}\GL_{n}(R)$.
\begin{proposition}\label{prop:ImageOfDiag2}
Let $d_i(\xi)=e+(\xi-1)e_{i,i}$ be a torus generator, $1\leq i\leq n$. Then the exterior square of $d_i(\xi)$ equals a diagonal matrix, with diagonal entries 1 everywhere except in $n-1$ positions:
\begin{equation}
\bw{2}\bigl(d_i(\xi)\bigr)_{I,I}=
\begin{cases}
\xi,& \text{ if } i\in I,\\
1,& \text{ otherwise}.
\end{cases}
\label{eq:diag2}
\end{equation}
\end{proposition}

It follows from the propositions that $\bw{2}t_{i,j}(\xi)\in \E^{n-2}(N,R)$, where a set $\E^M(N,R)$ consists of products of $M$ or less elementary transvections, e.\,g., $\bw{2}t_{1,3}(\xi)=t_{12,23}(-\xi)t_{14,24}(\xi)t_{15,25}(\xi)\in\bw{2}E_5(R)$.
 
Let $H$ be an overgroup of the exterior square of the elementary group $\bw{2}\E_n(R)$:
$$\bw{2}\E_n(R)\leq H \leq \GL_N(R).$$
We consider two indices $I, J \in \bw{2}[n]$. By $A_{I,J}$ we denote the set 
$$A_{I,J}:=\{\xi \in R \;|\; t_{I,J}(\xi) \in H \} \subseteq R.$$
By definition diagonal sets $A_{I,I}$ equals whole ring $R$ for any index $I$. In the rest of the section, we prove that these sets are ideals, i.\,.e., $A_{I,J}$ form a net of ideals. Moreover, we will get $D$-net in terms of Zenon Borevich~\cite{BVnets} by the latter statement.

Let $t_{I,J}(\xi)$ be an elementary transvection. We define a \textit{height} of $t_{I,J}(\xi)$ (generally, of the  pair $(I,J)$) as a cardinality of the intersection $I\cap J$:
$$\height(t_{I,J}(\xi))=\height(I,J)=|I\cap J|.$$
This combinatorial characteristic of transvections is useful in commutator calculations. 

The height splits up all sets $A_{I,J}$ into two classes: the one with $\height(I,J)=0$ and the other with $\height(I,J)=1$. In fact, these classes are equal for $n\geq 6$. The set $A := A_{I,J} $ is called  a \textit{level} of an overgroup $H$. Note that for $n=4$ the level is unique, that follows from~\cite{VP-EOeven}.

\begin{lemma}\label{lem:IdealFor2}
If $n\geq 6$, then every set $A_{I,J}$ is an ideal of the ring $R$. Moreover, for any $I\neq J$ and $K\neq L$ the ideals $A_{I,J}$ and $A_{K,L}$ coincide.
\end{lemma}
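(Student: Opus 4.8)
The plan is to prove both assertions of Lemma~\ref{lem:IdealFor2} simultaneously using the Chevalley commutator formula in $\E_N(R)$ together with the fact that $\bw{2}\E_n(R)\leq H$ supplies, for every elementary transvection $t_{i,j}(\xi)$ of the big group $\E_n(R)$, the product $\bw{2}t_{i,j}(\xi)$ from Proposition~\ref{prop:ImageOfTransvFor2} inside $H$. The first step is to establish that each $A_{I,J}$ is closed under addition: this is immediate from the additivity relation $t_{I,J}(\xi)t_{I,J}(\zeta)=t_{I,J}(\xi+\zeta)$, since $H$ is a group, so $A_{I,J}$ is already an additive subgroup of $R$. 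The real content is closure under multiplication by arbitrary ring elements, and the coincidence of all the $A_{I,J}$; both will come from conjugating a transvection $t_{I,J}(\xi)\in H$ by suitable elements $\bw{2}t_{i,j}(\zeta)\in H$ and reading off the Chevalley commutators.

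The key computational step is the following. Fix $I\neq J$ with $\xi\in A_{I,J}$, so $t_{I,J}(\xi)\in H$. I would pick an elementary generator $t_{i,j}(\zeta)$ of $\E_n(R)$ whose image $\bw{2}t_{i,j}(\zeta)$, when commuted with $t_{I,J}(\xi)$, produces a \emph{single} new transvection $t_{I',J}(\pm\xi\zeta)$ or $t_{I,J'}(\pm\xi\zeta)$ — that is, choose $(i,j)$ so that exactly one factor in the product~\eqref{eq:def2} interacts nontrivially (via the $j=h,\ i\neq k$ or $j\neq h,\ i=k$ cases) with $t_{I,J}(\xi)$ while all the remaining factors commute with it. Concretely, to replace one index $a\in I$ by a fresh index $b\notin I\cup J$ one commutes with $\bw{2}t_{b,a}(\zeta)$ (or $\bw{2}t_{a,b}(\zeta)$), and the hypothesis $n\geq 6$ guarantees enough room in $[n]$ — six indices are needed to house $I$, $J$, and the auxiliary index while keeping the ``spectator'' factors disjoint from both $I$ and $J$. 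Tracking signs via $\sign(\cdot)$ and the ordering conventions, one obtains $t_{I',J}(\eta)\in H$ for the modified weight $I'$ and for $\eta=\pm\xi\zeta$ with $\zeta$ arbitrary; hence $A_{I',J}\supseteq R\cdot\xi$. Iterating such single-index moves (each legal because $n\geq 6$ keeps a spare index available at every stage) lets me walk from any pair $(I,J)$ to any other pair $(K,L)$ through a chain of pairs, each step transporting the ideal generated by a given element; combined with the additive closure this yields both $A_{I,J}\cdot R\subseteq A_{I,J}$ (take the chain of length zero, i.e.\ an endomorphism of $(I,J)$ that multiplies $\xi$ by $\zeta$ and returns) and $A_{I,J}=A_{K,L}$.

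A cleaner way to package the multiplicative closure, which I would actually use, is: given $\xi\in A_{I,J}$, first move to a pair $(I,J')$ with $\height(I,J')$ one less (or to $(I',J)$), getting $\pm\xi\zeta$ there, then move back to $(I,J)$ picking up another factor; since there are no further minus-sign subtleties the net effect is $t_{I,J}(\xi\zeta^2)\in H$ — and then a polarization/substitution argument ($\zeta\mapsto\zeta+1$) upgrades $\xi\zeta^2\in A_{I,J}$ together with $\xi\cdot 1\in A_{I,J}$ to $\xi\zeta\in A_{I,J}$. Alternatively one arranges the two moves through \emph{different} auxiliary indices so that the two commutator factors are $\xi\zeta$ and $1$ respectively and multiplication is obtained in one clean pass; with $n\geq 6$ there is enough room for this. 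Either route shows $A_{I,J}$ is an ideal.

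The main obstacle is purely bookkeeping: one must verify that for the chosen auxiliary $(i,j)$ \emph{all but one} of the factors in the expansion~\eqref{eq:def2} of $\bw{2}t_{i,j}(\zeta)$ genuinely commute with $t_{I,J}(\xi)$ in $\E_N(R)$ — i.e.\ that their row/column weights avoid both $I$ and $J$ — and that the surviving factor hits $t_{I,J}$ in exactly the $j=h$ or $i=k$ case of the Chevalley formula, with the sign correctly computed from $\sign$ and the lexicographic order. This is where the hypothesis $n\geq 6$ is consumed: one needs the six weights involved (the two entries of $I$, the two of $J$, and an auxiliary pair, with various coincidences when $\height(I,J)=1$) to be arrangeable without forcing an unwanted collision. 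I would isolate this as an explicit ``single-move'' sublemma — for each admissible elementary index move there exists a transvection $\bw{2}t_{i,j}(\zeta)$ implementing it modulo a product of transvections lying in $\bw{2}\E_n(R)\leq H$ — check it once by a direct inspection of~\eqref{eq:def2}, and then obtain the Lemma formally by chaining moves and invoking additivity.
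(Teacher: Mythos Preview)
Your additive closure and the general strategy of conjugating $t_{I,J}(\xi)$ by elements $\bw{2}t_{i,j}(\zeta)\in H$ are correct, and your ``single-move'' sublemma is fine as far as it goes: when you replace an index $a\in I$ by a fresh index $b\notin I\cup J$, the relevant commutator is of the second type in the sense of Proposition~\ref{prop:TypesOfComm}, and you indeed obtain a single transvection $t_{I',J}(\pm\xi\zeta)\in H$. This suffices to show that $A_{I,J}$ is an ideal and that $A_{I,J}=A_{K,L}$ whenever $\height(I,J)=\height(K,L)$, and it also gives the easy inclusion $A_{1}\subseteq A_{0}$ (decreasing height).

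The genuine gap is the reverse inclusion $A_{0}\subseteq A_{1}$. Your single-index moves can never \emph{increase} the height: in a type~2 commutation the condition is that exactly one of $i\in I$, $j\in J$ holds, and the new index inserted is precisely the one \emph{not} lying in the other weight, so $|I'\cap J|\leq |I\cap J|$ (and symmetrically for $J'$). Thus your chain argument cannot walk from a height-$0$ pair such as $(12,34)$ to a height-$1$ pair such as $(12,13)$. Any attempt to force an index of $J$ into $I$ lands you in the third type of Proposition~\ref{prop:TypesOfComm}, which produces a \emph{product} $t_{\tilde I,J}(\pm\zeta\xi)\,t_{I,\tilde J}(\pm\zeta\xi)\,t_{\tilde I,\tilde J}(\pm\zeta^{2}\xi)$, not a single transvection; the two height-$1$ factors here are inseparably coupled.

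The paper resolves this by a ``double third-type'' trick (see the proof of Proposition~\ref{prop:IdealEqual} and step~(5) of the sketch): one runs the type-3 commutation on two \emph{disjoint} height-$0$ transvections $t_{I,J}(\xi)$ and $t_{I_1,J_1}(\xi)$, isolates from each a product of two coupled height-$1$ transvections, and then commutes these two products against one another so that exactly one nontrivial Chevalley commutator survives, yielding a single height-$1$ transvection with argument in $A_0^{2}\subseteq A_0$. Choosing $I,J,I_1,J_1$ and the auxiliary roots pairwise disjoint is exactly what consumes six indices; this, and not the availability of a ``spectator'' index for a type-2 move, is the true reason the bound $n\geq 6$ (in general $n\geq 3m$) appears. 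Your proposal does not contain this idea, and without it the argument cannot close.

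A minor point: your polarization route to multiplicative closure via $\xi\zeta^{2}\in A_{I,J}$ only yields $2\xi\zeta\in A_{I,J}$, so it needs $2\in R^{*}$ (which the paper does assume). The paper's cleaner device is the double type-2 commutator
\[
t_{I,J}(\xi\zeta)=\bigl[[t_{I,J}(\xi),\bw{m}t_{j,i}(\zeta)],\bw{m}t_{i,j}(\pm 1)\bigr],
\]
which gives the ideal property directly.
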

\begin{proof} A complete proof is presented in Section~\ref{subsec:levelcomputation}, Proposition~\ref{prop:LevelsEqualGeneral}. Here we sketch calculations in the case $(n,m) = (4,2)$ exclusively. These calculations present the general idea in a transparent way.
\begin{enumerate}
\item Firstly, take any $\xi\in A_{12,34}$, i.\,e., $t_{12,34}(\xi)\in H$. Then 
$$[t_{12,34}(\xi),\bw{2}t_{4,2}(\zeta)]=t_{14,23}(-\xi\zeta^2)t_{14,34}(-\zeta\xi)t_{12,23}(-\xi\zeta)\in H.$$
It remains to provide this calculation with $-\zeta$ and to product two right-hand sides; then we obtain $t_{14,23}(-2\xi\zeta^2)\in H$. By the condition $2 \in R^{*}$, this means that $A_{12, 34} \subseteq A_{14, 23}$.  It follows that
$$A_{I,J} \subseteq A_{K,L}\text{ for } I\cup J = K \cup L = \{ 1234 \}.$$
\item Secondly, take any $\xi\in A_{12,34}$, then $[t_{12,34}(\xi),\bw{2}t_{4,5}(\zeta)]=t_{12,35}(\xi\zeta)$. Consequently, 
$$A_{I,J} \subseteq A_{K,L}\text{ for } \height(I, J) = \height(K, L) = 0.$$
\item Thirdly, let $\xi\in A_{12,13}$, then $[t_{12,13}(\xi),\bw{2}t_{1,4}(\zeta)]=t_{12,34}(-\xi\zeta)\in H$. Consider two commutators of the latter transvection with $\bw{2}t_{4,1}(\zeta_1)$ and $\bw{2}t_{4,1}(-\zeta_1)$ respectively. We obtain that $t_{24,13}(\zeta_1^2\xi\zeta)\in H$ and also $t_{12,13}(-\xi\zeta\zeta_1)t_{24,34}(\zeta_1\xi\zeta)\in H$. Hence $t_{24,34}(\zeta_1\xi\zeta)\in H$. This means that 
$$A_{I,J} \subseteq A_{K,L}\text{ for any } \height(I, J) = \height(K, L) = 1.$$
\item Now, take any $\xi\in A_{12,23}$, then $[t_{12,23}(\xi),\bw{2}t_{4,2}(\zeta)]=t_{14,23}(-\zeta\xi)$. Thus
$$A_{I,J} \subseteq A_{K,L}\text{ for } \height(I, J)= 1, \height(K, L) = 0.$$
\item Finally, let $\xi\in A_{12,34}$. As in (1), consider the commutator $t_{12,34}(\xi)$ with $\bw{2}t_{4,2}(\zeta)$. We obtain $t_{14,23}(-2\xi\zeta^2)\in H$ and $t_{14,34}(-\zeta\xi)t_{12,23}(-\xi\zeta)\in H$. By the same argument we can provide these calculations with the transvection $t_{45,16}(\xi)$ and $\bw{2}t_{6,4}(\zeta_1)$. We get that $t_{56,14}(-2\zeta_1^2\xi)\in H$ and $t_{45,14}(\xi\zeta_1)t_{56,16}(\zeta_1\xi)\in H$. To finish the proof it remains to commutate latter two products. Then $t_{45,34}(-\xi^2\zeta_1\zeta)\in H$, or  
$$A_{I,J} \subseteq A_{K,L}\text{ for }\height(I, J)= 0, \height(K, L) = 1.$$
\end{enumerate}
\end{proof}

The following lemma is crucial for the rest. It gives an alternative description of the relative elementary group.
\begin{lemma}\label{lem:AlterRelatFor2}
Let $n\geq 6$. For any ideal $A\trianglelefteq R$, we have
$$\E_N(A)^{\bw{2}\E_n(R)}=\E_N(R,A),$$ where by definition $\E_N(R,A)=\E_N(A)^{\E_N(R)}.$
\end{lemma}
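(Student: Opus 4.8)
The inclusion $\E_N(R,A) \leq \E_N(A)^{\bw{2}\E_n(R)}$ is the nontrivial direction; the reverse inclusion is immediate since $\bw{2}\E_n(R) \leq \E_N(R)$, so $\E_N(A)^{\bw{2}\E_n(R)} \leq \E_N(A)^{\E_N(R)} = \E_N(R,A)$. For the hard direction, recall that by Suslin's theorem (cited in \S\ref{sec:princnot}, for $N \geq 3$) the relative elementary group $\E_N(R,A)$ is generated by the Vaserstein--Suslin generators $z_{I,J}(\xi,\zeta) = t_{J,I}(\zeta)t_{I,J}(\xi)t_{J,I}(-\zeta)$ with $\xi \in A$, $\zeta \in R$, $I \neq J \in \bw{2}[n]$. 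So it suffices to show that each such $z_{I,J}(\xi,\zeta)$ lies in $\E_N(A)^{\bw{2}\E_n(R)}$. The plan is to decompose $z_{I,J}(\xi,\zeta)$ into a product of conjugates of transvections $t_{K,L}(\eta)$ with $\eta \in A$, where the conjugating elements are pulled from $\bw{2}\E_n(R)$.

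First I would reduce to the case of a \emph{single} pair $(I,J)$: since $\bw{2}\E_n(R)$ acts transitively enough on pairs of weights (this is essentially the mechanism already exploited in Lemma~\ref{lem:IdealFor2}, where commutators with $\bw{2}t_{k,l}(\zeta)$ move a transvection $t_{I,J}(\xi)$ to $t_{K,L}(\pm\xi\cdots)$ for various $K,L$), conjugates of $\E_N(A)$ by $\bw{2}\E_n(R)$ already contain $t_{K,L}(A)$ for a cofinal family of pairs $(K,L)$ of both heights. Then for a fixed pair $(I,J)$ with $\height(I,J) \in \{0,1\}$, I would choose an auxiliary weight configuration and write $z_{I,J}(\xi,\zeta)$, using the Chevalley commutator formula inside $\E_N(R)$, as a product $t_{J,I}(\zeta) t_{I,J}(\xi) t_{J,I}(-\zeta)$ and expand $t_{J,I}(\zeta)$-conjugation: $z_{I,J}(\xi,\zeta) = t_{I,J}(\xi) \cdot [t_{J,I}(\zeta), t_{I,J}(\xi)] \cdot (\text{correction})$, where the commutator is a product of transvections $t_{K,L}(\pm\xi\zeta\cdots)$ whose entries lie in $A$ (since $\xi \in A$ and $A$ is an ideal). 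The key point is that each elementary generator $t_{J,I}(\zeta)$ appearing can be replaced: either it already is of the form $\bw{2}t_{j,i}(\zeta)$ up to a product of other transvections of the same shape (Proposition~\ref{prop:ImageOfTransvFor2}), or — more robustly — I conjugate $t_{I,J}(\xi) \in \E_N(A)$ directly by a suitable $\bw{2}t_{k,l}(\zeta) \in \bw{2}\E_n(R)$ and check that the resulting product of $A$-transvections, times $z_{I,J}(\xi,\zeta)^{-1}$ (or a known element of $\E_N(A)^{\bw{2}\E_n(R)}$), is again a short product of $A$-transvections that can be absorbed inductively.

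The main obstacle — and the reason the $n \geq 6$ hypothesis and $2 \in R^*$ enter — is that $\bw{2}\E_n(R)$ is a rather small subgroup of $\E_N(R)$: an image $\bw{2}t_{i,j}(\xi)$ is a \emph{product} of several elementary transvections (Proposition~\ref{prop:ImageOfTransvFor2}), not a single one, so one cannot conjugate by an arbitrary $\E_N(R)$-generator. One must carefully bookkeep which pairs of weights can be ``linked'' by an honest image $\bw{2}t_{k,l}(\zeta)$ — precisely the pairs $(K,L)$ where $K$ and $L$ differ in exactly one index, i.e. $\height(K,L)=1$ — and, when the target pair has $\height(I,J)=0$, chain two such moves through an intermediate weight (this needs enough indices, hence $n \geq 6$). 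The factor $2 \in R^*$ is used, exactly as in Lemma~\ref{lem:IdealFor2}, to extract a single transvection from expressions of the form $t_{K,L}(2\eta)$ that arise when symmetrizing a commutator $[t_{I,J}(\xi), \bw{2}t_{k,l}(\zeta)][t_{I,J}(\xi), \bw{2}t_{k,l}(-\zeta)]$. Once every $z_{I,J}(\xi,\zeta)$ is exhibited inside $\E_N(A)^{\bw{2}\E_n(R)}$, the equality follows. I would expect this argument to be written cleanly by first proving the auxiliary claim that $t_{K,L}(A) \subseteq \E_N(A)^{\bw{2}\E_n(R)}$ for \emph{all} pairs $K \neq L$ (height $0$ and $1$), and then deriving the statement for the $z$-generators formally from the Chevalley commutator formula.
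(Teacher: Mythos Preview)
Your setup is right: the easy inclusion is immediate, and by Vaserstein--Suslin it suffices to place each $z_{I,J}(\xi,\zeta)={}^{t_{J,I}(\zeta)}t_{I,J}(\xi)$ (with $\xi\in A$, $\zeta\in R$) into $F:=\E_N(A)^{\bw{2}\E_n(R)}$. Your observation that the height-$1$ case is handled by replacing the single conjugator $t_{J,I}(\zeta)$ with an exterior image $\bw{2}t_{j,i}(\zeta)$ (the extra factors commute with $t_{I,J}(\xi)$) is exactly what the paper does. But the rest of the plan has a genuine gap.

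First, your closing ``auxiliary claim'' that $t_{K,L}(A)\subseteq F$ is vacuous: those transvections already generate $\E_N(A)\leq F$. Knowing this does not help with $z_{I,J}(\xi,\zeta)$, whose difficulty is precisely that the conjugating element $t_{J,I}(\zeta)$ has $\zeta\in R$, not $\zeta\in A$, and is not an element of $\bw{2}\E_n(R)$. Second, your proposed expansion $z_{I,J}(\xi,\zeta)=t_{I,J}(\xi)\cdot[t_{J,I}(\zeta),t_{I,J}(\xi)]\cdot(\text{correction})$ is a dead end: $[t_{J,I}(\zeta),t_{I,J}(\xi)]$ involves opposite roots, so the Chevalley commutator formula does not apply and this commutator has no short transvection factorization. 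Third, you invoke $2\in R^*$, but the paper's proof of this lemma does not use it; that hypothesis is needed for Lemma~\ref{lem:IdealFor2}, not here.

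The missing idea for height $0$ (all of $i,j,h,k$ distinct) is to factor $t_{I,J}(\xi)$ itself through an intermediate weight \emph{before} conjugating: write $t_{ij,hk}(\xi)=[t_{ij,jh}(\xi),t_{jh,hk}(1)]$, conjugate both arguments by $t_{hk,ij}(\zeta)$ to obtain a commutator of the form $[ab,cd]$ with $a=t_{ij,jh}(\xi)$, $b=t_{hk,jh}(\zeta\xi)$, $c=t_{jh,ij}(-\zeta)$, $d=t_{jh,hk}(1)$, and then expand via $[ab,cd]={}^a[b,c]\cdot{}^{ac}[b,d]\cdot[a,c]\cdot{}^c[a,d]$. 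Each of the four pieces is now a commutator of transvections at height $\geq 1$, and one checks directly (using the Chevalley formula and replacing height-$1$ conjugators by $\bw{2}t_{\bullet,\bullet}$ as in the easy case) that every factor lies in $F$. This $[ab,cd]$ decomposition is the computational core, and it is absent from your plan.
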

\begin{proof}
The inclusion $\leq$ is trivial. By Vaserstein--Suslin's lemma~\cite{VasSusSerre}, the group $\E_N(R,A)$ is generated by elements of the form
$$z_{ij,hk}(\xi,\zeta) = z_{I,J}(\xi,\zeta) = t_{J,I}(\zeta)\,t_{I,J}(\xi)\,t_{J,I}(-\zeta), \;\xi \in A, \zeta \in R.$$
Hence to prove the reverse inclusion, it sufficient to check the
matrix $z_{ij,hk}(\xi,\zeta)$ to belong to $F:=\E_N(A)^{\bw{2}\E_n(R)}$ for any $\xi \in A$, $\zeta \in R$. Let us consider two cases:
\begin{itemize}
\item Suppose that there exists one pair of the same indices. Without loss of generality, we can assume that $i=k$. Then this inclusion is obvious:
$$z_{ij,hi}(\xi,\zeta)={}^{t_{hi,ij}(\zeta)}t_{ij,hi}(\xi)={}^{\bw{2}t_{h,j}(\zeta)}t_{ij,hi}(\xi)\in F.$$
\item Thus, we are left with the inclusion $z_{ij,hk}(\xi,\zeta)\in F$ with different indices $i,\,j\,,h\,,k$. Firstly, we express
$t_{ij,hk}(\xi)$ as a commutator of elementary transvections:
$$z_{ij,hk}(\xi,\zeta)={}^{t_{hk,ij}(\zeta)}t_{ij,hk}(\xi)={}^{t_{hk,ij}(\zeta)}[t_{ij,jh}(\xi),t_{jh,hk}(1)].$$
Conjugating arguments of the commutator by $t_{hk,ij}(\zeta)$, we get
$$z_{ij,hk}(\xi,\zeta)=[t_{ij,jh}(\xi)t_{hk,jh}(\zeta \xi),t_{jh,ij}(-\zeta)t_{jh,hk}(1)] =:[ab,cd].$$
Next, we decompose the right-hand side with a help of the formula 
$$[ab,cd] ={}^a[b,c]\cdot {}^{ac}[b,d]\cdot[a,c]\cdot {}^c[a,d],$$
and observe the exponent $a$ belongs to $\E_N(A)$, so can be ignored. Now a direct calculation, based upon the Chevalley commutator formula, shows that
\begin{align*}
[b,c]&=[t_{hk,jh}(\zeta \xi),t_{jh,ij}(-\zeta)]=t_{hk,ij}(-\zeta^2 \xi) \in \E_N(A);\\
^c[b,d]&={}^{t_{jh,ij}(-\zeta)}[t_{hk,jh}(\zeta \xi),t_{jh,hk}(1)]=\\
&=t_{hk,ik}(-\xi\zeta^2(1+\xi\zeta))t_{jh,ik}(-\xi\zeta^2)\cdot \,{}^{\bw{2}t_{h,i}(\zeta)}[t_{hk,jh}(\xi\zeta),\bw{2}t_{j,k}(-1)];\\
[a,c]&=[t_{ij,jh}(\xi),t_{jh,ij}(-\zeta)]=[t_{ij,jh}(\xi),\bw{2}t_{h,i}(-\zeta)];\\
^c[a,d]&={}^{t_{jh,ij}(-\zeta)}[t_{ij,jh}(\xi),t_{jh,hk}(1)]=\\
&=t_{jh,ik}(\xi\zeta^2)t_{ij,ik}(-\xi\zeta)\cdot\,{}^{\bw{2}t_{h,i}(\zeta)}[t_{ij,jh}(\xi),\bw{2}t_{j,k}(-1)],
\end{align*}
where all factors on the right-hand side belong to $F$.
\end{itemize}
\end{proof}

\begin{remark}
The attentive reader can remark these calculations to be almost completely coincide with the calculations for the orthogonal and symplectic cases~\cite{VP-EOeven,VP-EOodd,VP-Ep}. In the special case $(n,m) = (4,2)$ calculations are the same due to the isomorphism $\bw{2}\E_4(R) \cong \EO_6(R)$. Amazingly this argument proves a similar proposition in the case of general exterior power (see Section~\ref{subsec:levelcomputation}, Lemma~\ref{lem:AlterRelatForm}).
\end{remark}

\begin{corollary}\label{cor:CorolOfL2}
Let $A$ be an arbitrary ideal of $R$. Then
$$\bw{2}\E_n(R)\cdot\E_N(R,A)=\bw{2}\E_n(R)\cdot\E_N(A).$$
\end{corollary}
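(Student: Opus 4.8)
The inclusion $\bw{2}\E_n(R)\cdot\E_N(A)\subseteq\bw{2}\E_n(R)\cdot\E_N(R,A)$ is immediate from $\E_N(A)\leq\E_N(R,A)$, and since $\bw{2}\E_n(R)$ is a subgroup it suffices, for the reverse inclusion, to prove $\E_N(R,A)\subseteq\bw{2}\E_n(R)\cdot\E_N(A)$. By Lemma~\ref{lem:AlterRelatFor2} we may rewrite $\E_N(R,A)=\E_N(A)^{\bw{2}\E_n(R)}$. A normal closure under $\bw{2}\E_n(R)$ is by construction $\bw{2}\E_n(R)$-invariant, so $\bw{2}\E_n(R)\cdot\E_N(R,A)$ is a subgroup of $\GL_N(R)$ (alternatively, this is Suslin's theorem: $\E_N(R,A)\trianglelefteq\GL_N(R)$, and Lemma~\ref{SuslinFor2} gives the normalisation), and it coincides with the subgroup $\langle\bw{2}\E_n(R),\E_N(A)\rangle$ generated by $\bw{2}\E_n(R)$ and $\E_N(A)$. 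Hence the statement is equivalent to the assertion that the product set $\bw{2}\E_n(R)\cdot\E_N(A)$ is already a subgroup.

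To establish that, the plan is to show $\E_N(A)\cdot\bw{2}\E_n(R)\subseteq\bw{2}\E_n(R)\cdot\E_N(A)$, which is exactly $\bigl.{}^{g}\bigr.\E_N(A)\subseteq\bw{2}\E_n(R)\cdot\E_N(A)$ for every $g\in\bw{2}\E_n(R)$; closure under inversion is then automatic, since $(\bw{2}t_{i,j}(\zeta)\,t_{I,J}(\xi))^{-1}=t_{I,J}(-\xi)\,\bw{2}t_{i,j}(-\zeta)\in\E_N(A)\cdot\bw{2}\E_n(R)$. The core computation concerns a single transvection: one expands $\bw{2}t_{i,j}(\zeta)$ into the explicit product of elementary transvections furnished by Proposition~\ref{prop:ImageOfTransvFor2} and conjugates $t_{I,J}(\xi)$, $\xi\in A$, factor by factor via the Chevalley commutator formula. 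Every factor other than the single possible ``opposite'' transvection $t_{J,I}(\pm\zeta)$ either commutes with $t_{I,J}(\xi)$ or contributes an extra transvection $t_{\bullet,\bullet}(\pm\xi\zeta)$ with $\xi\zeta\in A$, and so the result stays inside $\E_N(A)$; in the remaining case the conjugate is, modulo $\E_N(A)$-factors, of the shape $z_{I,J}(\xi,\pm\zeta)$, and one rewrites it inside $\bw{2}\E_n(R)\cdot\E_N(A)$ using the identities already worked out in the proof of Lemma~\ref{lem:AlterRelatFor2} together with the observation that $\bw{2}t_{i,j}(\eta)\in\E_N(A)$ whenever $\eta\in A$.

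The main obstacle is precisely this ``opposite-transvection'' bookkeeping: one must reorganise the Chevalley-formula expansion so that all contributions originating in $\bw{2}\E_n(R)$ can be collected to the left of the $\E_N(A)$-contribution, and then see that this local statement on generators does assemble into the set inclusion $\bw{2}\E_n(R)\cdot\E_N(A)=\langle\bw{2}\E_n(R),\E_N(A)\rangle$. This is the same mechanism that appears — used in the opposite direction — in the proof of Lemma~\ref{lem:AlterRelatFor2}, so the difficulty is organisational rather than conceptual; once it is carried out, combined with Lemma~\ref{SuslinFor2} it yields Corollary~\ref{cor:CorolOfL2}.
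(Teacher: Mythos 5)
The paper gives no proof of this corollary; it is intended as an immediate consequence of Lemma~\ref{lem:AlterRelatFor2}, read as an equality of subgroups $\bw{2}\E_n(R)\cdot\E_N(R,A)=\langle\bw{2}\E_n(R),\E_N(A)\rangle$ (cf.\ the sentence after Theorem~\ref{thm:LevelFor2}, which explicitly speaks of this group as ``generated as a subgroup by'' the two families of transvections). With that reading the argument is a one--liner: the left side is a subgroup because $\E_N(R,A)\trianglelefteq\GL_{N}(R)$, and it contains $\bw{2}\E_n(R)$ and $\E_N(A)$, so $\langle\bw{2}\E_n(R),\E_N(A)\rangle\leq\bw{2}\E_n(R)\cdot\E_N(R,A)$; conversely $\langle\bw{2}\E_n(R),\E_N(A)\rangle$ contains $\E_N(A)$ and is normalized by $\bw{2}\E_n(R)$, so by Lemma~\ref{lem:AlterRelatFor2} it contains $\E_N(A)^{\bw{2}\E_n(R)}=\E_N(R,A)$, and therefore also the left side.

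You instead set out to prove the strictly stronger set--theoretic statement that the product $\bw{2}\E_n(R)\cdot\E_N(A)$ is itself a subgroup. Your reduction to ${}^g\E_N(A)\subseteq\bw{2}\E_n(R)\cdot\E_N(A)$ for every $g\in\bw{2}\E_n(R)$ is correct, and restricting $g$ to the generators $\bw{2}t_{i,j}(\zeta)$ is legitimate, since if ${}^{g_1}\E_N(A)$ and ${}^{g_2}\E_N(A)$ lie in $\bw{2}\E_n(R)\cdot\E_N(A)$ then ${}^{g_1g_2}\E_N(A)={}^{g_1}\bigl({}^{g_2}\E_N(A)\bigr)\subseteq{}^{g_1}\bigl(\bw{2}\E_n(R)\cdot\E_N(A)\bigr)=\bw{2}\E_n(R)\cdot{}^{g_1}\E_N(A)\subseteq\bw{2}\E_n(R)\cdot\E_N(A)$. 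But you may \emph{not} restrict $\E_N(A)$ to its transvection generators $t_{I,J}(\xi)$: if ${}^g s_1=h_1u_1$ and ${}^g s_2=h_2u_2$ with $h_i\in\bw{2}\E_n(R)$, $u_i\in\E_N(A)$, then ${}^g(s_1s_2)=h_1u_1h_2u_2$, and to push this inside $\bw{2}\E_n(R)\cdot\E_N(A)$ you need $u_1h_2\in\bw{2}\E_n(R)\cdot\E_N(A)$, which is exactly the inclusion you are trying to establish. So the ``organisational'' obstacle you flag is in fact a genuine circularity, and the sketch does not prove the product--set equality. For the way the corollary is actually used later in the paper, the generating--set reading suffices and none of this extra machinery is needed.
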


Summarizing above two lemmas, we get the main result of the paper for bivectors.
\begin{theorem}[Level Computation]\label{thm:LevelFor2}
Let $n\geq 6$ and let $H$ be a subgroup in $\GL_{N}(R)$ containing $\bw{2}\E_n(R)$. Then there exists a unique maximal ideal $A\trianglelefteq R$ such that
$$\bw{2}\E_n(R)\cdot\E_N(R,A)\leq H.$$
Namely, if $t_{I,J}(\xi)\in H$ for some $I$ and $J$, then $\xi\in A$.
\end{theorem}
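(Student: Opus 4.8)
The plan is to assemble Theorem~\ref{thm:LevelFor2} directly from the two preceding lemmas, without any new commutator gymnastics. First I would recall the set-up: given an overgroup $H$ with $\bw{2}\E_n(R)\leq H\leq \GL_N(R)$, define, as above, $A:=A_{I,J}$ for any fixed pair of unequal indices $I\neq J$. By Lemma~\ref{lem:IdealFor2} this is well defined (independent of the choice of $I,J$ for $n\geq 6$) and is a genuine ideal of $R$. By construction, $t_{I,J}(\xi)\in H$ for every $\xi\in A$ and every pair $I\neq J$, so $\E_N(A)=\langle t_{I,J}(\xi): I\neq J,\ \xi\in A\rangle\leq H$. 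Since also $\bw{2}\E_n(R)\leq H$, the product set $\bw{2}\E_n(R)\cdot\E_N(A)$ — which is a subgroup because $\bw{2}(\E_n(R))$ normalizes $\E_N(A)$ inside $\GL_N(R)$ — is contained in $H$.

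Next I would invoke Corollary~\ref{cor:CorolOfL2} (itself a consequence of Lemma~\ref{lem:AlterRelatFor2}) to rewrite $\bw{2}\E_n(R)\cdot\E_N(A)=\bw{2}\E_n(R)\cdot\E_N(R,A)$, which upgrades the inclusion to the stated form $\bw{2}\E_n(R)\cdot\E_N(R,A)\leq H$. This is the crux of why the statement is clean: a priori $H$ only contains the "naive" transvections $t_{I,J}(\xi)$ with $\xi\in A$, but Lemma~\ref{lem:AlterRelatFor2} shows that their normal closure under $\bw{2}\E_n(R)$ already recovers the full relative elementary subgroup $\E_N(R,A)$, so nothing is lost.

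For uniqueness and maximality: suppose $B\trianglelefteq R$ is any ideal with $\bw{2}\E_n(R)\cdot\E_N(R,B)\leq H$. Then in particular $t_{I,J}(\beta)\in \E_N(B)\leq \E_N(R,B)\leq H$ for every $\beta\in B$ and every fixed pair $I\neq J$, so by the very definition of $A_{I,J}=A$ we get $B\subseteq A$. Hence $A$ is the unique maximal ideal with the required property, and the final sentence "if $t_{I,J}(\xi)\in H$ then $\xi\in A$" is immediate from $A=A_{I,J}$ together with the coincidence of all the $A_{I,J}$ (Lemma~\ref{lem:IdealFor2}).

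Honestly, I expect no real obstacle here: the theorem is a packaging statement, and all the genuine content — that the $A_{I,J}$ are ideals, that they all coincide for $n\geq 6$, and that $\E_N(A)^{\bw{2}\E_n(R)}=\E_N(R,A)$ — has already been discharged in Lemmas~\ref{lem:IdealFor2} and~\ref{lem:AlterRelatFor2}. The only point that needs a line of care is that $\bw{2}\E_n(R)\cdot\E_N(R,A)$ is actually a subgroup (so that "$\leq H$" makes sense as a subgroup inclusion rather than mere containment of sets); this follows from Lemma~\ref{SuslinFor2}, since $\bw{2}(\E_n(R))\trianglelefteq\bw{2}(\GL_n(R))$ normalizes $\E_N(R,A)$, the latter being normal in all of $\GL_N(R)$ by Suslin's theorem. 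If one wanted to be maximally careful, the mild subtlety worth flagging is the hypothesis $2\in R^{*}$, which was used inside Lemma~\ref{lem:IdealFor2} (the factor-of-$2$ arguments such as $t_{14,23}(-2\xi\zeta^2)\in H$) and is therefore implicitly in force throughout this subsection; I would simply restate it among the hypotheses of the theorem for clarity.
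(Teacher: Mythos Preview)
Your proposal is correct and matches the paper's approach exactly: the paper introduces Theorem~\ref{thm:LevelFor2} with the single sentence ``Summarizing above two lemmas, we get the main result of the paper for bivectors,'' so the theorem is indeed just a packaging of Lemma~\ref{lem:IdealFor2} and Lemma~\ref{lem:AlterRelatFor2} (via Corollary~\ref{cor:CorolOfL2}), precisely as you outline. One tiny cosmetic point: the citation of Lemma~\ref{SuslinFor2} for the subgroup claim is misplaced---what you actually use (and correctly state) is that $\E_N(R,A)\trianglelefteq\GL_N(R)$ by Suslin's theorem, which already makes $\bw{2}\E_n(R)\cdot\E_N(R,A)$ a subgroup.
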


Lemma~\ref{lem:AlterRelatFor2} asserts precisely $\bw{2}\E_n(R)\cdot\E_N(R,A)$ to be generated as a subgroup by transvections $\bw{2}t_{i,j}(\zeta)$, $\zeta\in R$, and by elementary transvections $t_{ij,hk}(\xi)$, $\xi\in A$ of level $A$. As usual, we assume that $n\geq 6$ and $2\in R^{*}$.

We formulate a perfectness of the lower bound subgroup from the latter Theorem. The proof follows from Lemma \ref{lem:PerfectForm}.
\begin{lemma}\label{lem:PerfectFor2}
Let $n\geq 6$. The group $\bw{2}\E_n(R)\cdot\E_N(R,A)$ is perfect for any ideal $A\trianglelefteq R$.
\end{lemma}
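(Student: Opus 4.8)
The plan is to prove perfectness by exhibiting every generator of $G:=\bw{2}\E_n(R)\cdot\E_N(R,A)$ as a product of commutators of elements of $G$. By Lemma~\ref{lem:AlterRelatFor2} and Corollary~\ref{cor:CorolOfL2} the group $G$ is generated by the two families $\bw{2}t_{i,j}(\zeta)$ with $\zeta\in R$ and $t_{I,J}(\xi)$ with $\xi\in A$ and $\mathrm{ht}(I,J)\le 1$. So it suffices to write each element of these two families as a product of commutators $[x,y]$ with $x,y\in G$.

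First I would handle the image-of-elementary generators $\bw{2}t_{i,j}(\zeta)$. Since $\bw{2}$ is a homomorphism and $\E_n(R)$ is perfect for $n\ge 3$ — every transvection $t_{i,j}(\zeta)$ equals $[t_{i,h}(\zeta),t_{h,j}(1)]$ for a third index $h$ by the Chevalley commutator formula — applying $\bw{2}$ gives $\bw{2}t_{i,j}(\zeta)=[\bw{2}t_{i,h}(\zeta),\bw{2}t_{h,j}(1)]$, a commutator of two elements of $\bw{2}\E_n(R)\le G$. That disposes of the first family immediately.

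Next I would handle the relative elementary generators $t_{I,J}(\xi)$, $\xi\in A$. The idea is the same: realize each such transvection as the value of the Chevalley commutator formula in $\GL_N(R)$, but now choosing one factor inside $\bw{2}\E_n(R)$ and the other inside $\E_N(R,A)$, both of which lie in $G$. Concretely, for $\mathrm{ht}(I,J)\le 1$ one can pick a weight $K\in\bw{2}[n]$ with $\mathrm{ht}(I,K)\le 1$, $\mathrm{ht}(K,J)\le 1$ and such that $t_{I,K}$ appears as a literal factor in the product expansion \eqref{eq:def2} of some $\bw{2}t_{a,b}(\zeta)$ — equivalently, the pair $(I,K)$ is ``induced'' by an $\E_n$-transvection — while $t_{K,J}(\xi)$ still has height $\le 1$, hence lies in $\E_N(R,A)\le G$. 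Then the branch $[t_{I,K}(1),t_{K,J}(\xi)]=t_{I,J}(\xi)$ of the Chevalley formula exhibits $t_{I,J}(\xi)$ as a commutator in $G$, modulo the other factors of that $\bw{2}t_{a,b}$-product, which themselves lie in $G$ and can be absorbed. One carries this out separately for the two heights, using $n\ge 6$ to guarantee enough free indices to place $K$.

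The main obstacle I expect is the bookkeeping in the second family: when $t_{I,K}$ is only a \emph{factor} of $\bw{2}t_{a,b}(\zeta)$ rather than equal to it, conjugating or commutating by $\bw{2}t_{a,b}(\zeta)$ produces the wanted $t_{I,J}(\xi)$ together with several parasitic transvections coming from the other factors in \eqref{eq:def2}; one must check that all of these parasites again lie in $G$ (they are either images of $\E_n$-transvections or transvections of level $A$ and height $\le 1$) so that the product can be rearranged into a genuine commutator expression. Since Lemma~\ref{lem:PerfectForm} is cited as giving the general statement, I would in fact just invoke it, but the $(n,m)=(n,2)$ instance of exactly this argument — perfectness of $\E_n(R)$ plus the homomorphism property of $\bw{2}$ plus the Chevalley formula with one leg in each of the two generating families — is the entire content.
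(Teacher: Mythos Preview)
Your final move --- invoke Lemma~\ref{lem:PerfectForm} --- is exactly what the paper does, so the proposal is correct and matches the paper's proof. One refinement to your sketch of that lemma's content: the paper's specific choice of intermediate weight eliminates the parasitic transvections entirely rather than having to absorb them. Writing $I=k_1\dots k_pi_1\dots i_q$, $J=k_1\dots k_pj_1\dots j_q$, the paper sets $V=k_1\dots k_pj_1\dots j_{q-1}i_q$ and computes $[t_{I,V}(\xi),\bw{m}t_{i_q,j_q}(\pm1)]$; since $j_q\notin I$, the only factor of $\bw{m}t_{i_q,j_q}$ that fails to commute with $t_{I,V}(\xi)$ is $t_{V,J}(\pm1)$, and the commutator is exactly $t_{I,J}(\pm\xi)$ with no extra terms. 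In your dual setup this amounts to choosing the root index $a\in I\setminus J$, which forces $a\notin J$ and kills the parasites outright. This matters because your stated fallback, ``parasites lie in $G$'', is not by itself enough to conclude $t_{I,J}(\xi)\in[G,G]$; you would need them in $[G,G]$, and the clean way to get that is simply not to produce them.
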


\subsection{Exterior powers of elementary groups}\label{subsec:general}

In this section, we lift the previous statements from the level of an exterior square to the case of an arbitrary exterior power functor.

Let us define an $m$-th exterior power of an $R$-module $R^n$ as follows. A basis of this module consists of exterior products $e_{i_1}\wedge\dots\wedge e_{i_m}$, where $1\leq i_1<\dots<i_m\leq n$. Products $e_{i_1}\wedge\dots\wedge e_{i_m}$ are defined for any set $i_1,\dots,i_m$ as $e_{\sigma(i_{1})}\wedge\ldots\wedge e_{\sigma(i_{m})} = \sign(\sigma)\, e_{i_1} \wedge \ldots \wedge e_{i_{m}}$ for any permutation $\sigma$ in the permutation group $S_{m}$. We denote the $m$-th exterior power of $R^n$ by $\bw{m}R^n$. 

For every $m$ the group $\GL_{n}(R)$ acts diagonally on the module $\bw{m}R^n$. Namely, an action of a matrix $g\in\GL_n(R)$ on decomposable $m$-vectors is set according to the rule
$$\bw{m}(g)(e_{i_1}\wedge\dots\wedge e_{i_m}):=(ge_{i_1})\wedge\dots\wedge (ge_{i_m})$$
for every $e_{i_1},\dots,e_{i_m}\in R^n$. In the basis $e_I, I\in\bw{m}[n]$ a matrix $\bw{m}(g)$ consists of $m$-order minors of the matrix $g$ with lexicographically ordered columns and rows:
$$\left(\bw{m}(g)\right)_{I,J}=\left(\bw{m}(g)\right)_{(i_1,\dots,i_m),(j_1,\dots,j_m)}=M_{i_1,\dots,i_m}^{j_1,\dots,j_m}(g).$$

By the Cauchy--Binet theorem the map $\pi\colon\GL_{n}(R)\longrightarrow \GL_{N}(R)$, $x\mapsto \bw{m}(x)$ is homomorphism. Thus, the map $\pi$ is a representation of the group $\GL_n(R)$ called the $m$-th \textit{vector representation} or the $m$-th \textit{fundamental representation} (\textit{the representation with the highest weight} $\varpi_m$). The image of the latter action is called the $m$-th exterior power of the group $\GL_n(R)$. $\E_n(R)$ is a subgroup of $\GL_n(R)$, therefore the exterior power of the elementary group is well defined.

We cannot but emphasize the difference for arbitrary rings between the groups\footnote{The same strict inclusions are still true with changing $\GL$ to $\SL$.}
$$\bw{m} \bigl(\GL_{n}(R)\bigr) < \bw{m}\GL_{n}(R) < \GL_{\binom{n}{m}}(R).$$
The first group is a set-theoretic image of the [abstract] group $\GL_{n}(R)$ under the Cauchy--Binet homomorphism $\bw{m}:\GL_{n}(R)\longrightarrow \GL_{\binom{n}{m}}\left(R\right)$, while the second one is a group of $R$-points of the \textbf{categorical} image of the group scheme $\GL_{n}$ under the natural transformation corresponding to the Cauchy--Binet homomorphism. Since the epimorphism of algebraic groups on points is not surjective in this situation, we see that $\bw{m}\GL_n(R)$ is strictly larger than $\bw{m}\bigl(\GL_{n}(R)\bigr)$. In fact, elements of $\bw{m}\GL_n(R)$ are still images of matrices, but coefficients are not from the ring itself, but from its extensions. This means that for any commutative ring $R$ elements $\widetilde{g}\in\bw{m}\GL_n(R)$ can be represent in the form $\widetilde{g}=\bw{m}g$, $g\in\GL_n(S)$, where $S$ is an extension of the ring $R$. We refer the reader to~\cite{VavPere} for more precise results about the difference between these groups.

As in Section~\ref{subsec:square}, $\bw{m}\E_n(R)$ is a normal subgroup of $\bw{m}(\GL_{n}(R))$ by Suslin's lemma. Moreover, $\bw{m}\E_n(R)$ is normal in $\bw{m}\GL_n(R)$. This fact follows from~\cite[Theorem~1]{PetrovStavrovaIsotropic}.
\begin{theorem}\label{thm:normalityPS}
Let $R$ be a commutative ring, $n\geq 3$, then
$\bw{m}\E_n(R)\trianglelefteq\bw{m}\GL_n(R)$.
\end{theorem}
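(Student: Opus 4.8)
The plan is to deduce the statement from the normality theorem of Petrov and Stavrova for elementary subgroups of isotropic reductive group schemes \cite{PetrovStavrovaIsotropic}, after identifying every object in sight with its scheme-theoretic incarnation. Let $G$ denote the categorical image of $\GL_n$ under the natural transformation $\bw{m}$, so that by definition $G(R)=\bw{m}\GL_n(R)$ for every commutative ring $R$. The first task is to record that $G$ is a split reductive group scheme over $\mathbb Z$ with root system $A_{n-1}$. Indeed, since $\bw{m}(\lambda e)=\lambda^{m}e$ and $m<n$ (because $n\geq 3m$), the scheme-theoretic kernel of $\bw{m}\colon\GL_n\to\GL_N$ is the group $\mu_m$ of $m$-th roots of unity sitting centrally among the scalar matrices; as $\mu_m$ is flat, diagonalizable and central, the quotient $G\cong\GL_n/\mu_m$ is reductive and carries the same (absolute, and here also relative) root system $A_{n-1}$ --- a careful analysis of this quotient and of the difference between $G(R)$ and $\bw{m}\bigl(\GL_n(R)\bigr)$ is in \cite{VavPere}. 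Thus $G$ is split of semisimple, hence isotropic, rank $n-1$, and $n\geq 3$ forces this rank to be $\geq 2$, which is precisely the hypothesis of \cite[Theorem~1]{PetrovStavrovaIsotropic}.

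The second task is to check that $\bw{m}\E_n(R)$ coincides with the elementary subgroup $\E_G(R)$ in the sense of \cite{PetrovStavrovaIsotropic}. Let $B$ and $B^-$ be the Borel subgroups of upper- and lower-triangular matrices in $\GL_n$, with unipotent radicals $U_B$ and $U_{B^-}$. Since $\mu_m$ lies in the diagonal torus, $U_B\cap\mu_m=1$, so $\bw{m}$ restricts to an isomorphism of group schemes from $U_B$ onto the unipotent radical $U_{\overline B}$ of the Borel subgroup $\overline B=\bw{m}(B)$ of $G$, and similarly for $B^-$; taking $R$-points gives $\bw{m}\bigl(U_B(R)\bigr)=U_{\overline B}(R)$ and $\bw{m}\bigl(U_{B^-}(R)\bigr)=U_{\overline B^{-}}(R)$. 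As every unitriangular matrix is a product of elementary transvections, $\E_n(R)=\langle U_B(R),U_{B^-}(R)\rangle$, whence
$$\bw{m}\E_n(R)=\bigl\langle\,U_{\overline B}(R),\,U_{\overline B^{-}}(R)\,\bigr\rangle .$$
Because the isotropic rank of $G$ is $\geq 2$, the theorem of Petrov and Stavrova guarantees that the right-hand side is independent of the chosen pair of opposite parabolic subgroups and equals $\E_G(R)$. Applying \cite[Theorem~1]{PetrovStavrovaIsotropic} we conclude $\bw{m}\E_n(R)=\E_G(R)\trianglelefteq G(R)=\bw{m}\GL_n(R)$.

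It is worth emphasizing that this detour through reductive group schemes is genuinely necessary: normality of $\bw{m}\E_n(R)$ inside the point-image $\bw{m}\bigl(\GL_n(R)\bigr)$ is immediate from Suslin's theorem in $\GL_n(R)$, but $\bw{m}\GL_n(R)$ is in general strictly larger, its additional elements being images of matrices with entries in Kummer-type extensions of $R$, and it is exactly these that are not reached by the naive argument. Consequently the only nontrivial content of the proof is the structural identification of $\bw{m}\GL_n$ with $\GL_n/\mu_m$ together with the matching $\bw{m}\E_n(R)=\E_G(R)$; with these in hand the statement is a black-box application of \cite[Theorem~1]{PetrovStavrovaIsotropic}, and this is also where the main difficulty lies.
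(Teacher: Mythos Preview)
Your proposal is correct and follows exactly the route the paper indicates: the paper does not give its own proof but simply records that the statement ``follows from \cite[Theorem~1]{PetrovStavrovaIsotropic}'', and you have filled in precisely the verifications needed to invoke that theorem (identifying $\bw{m}\GL_n$ with $\GL_n/\mu_m$ and matching $\bw{m}\E_n(R)$ with the Petrov--Stavrova elementary subgroup). One cosmetic remark: your parenthetical ``because $n\geq 3m$'' is stronger than what the theorem actually assumes; $m<n$ already follows from the paper's standing convention $m\leq n$ (in fact $n\geq 2m$ after invoking duality), so the argument goes through under the stated hypothesis $n\geq 3$ alone.
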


For further computations we calculate an exterior power of an elementary transvection in the following proposition. The proof is straightforward by the very definition of the [classical] Binet--Cauchy homomorphism.

\begin{proposition}\label{prop:ImageOfTransvForm}
Let $t_{i,j}(\xi)$ be an elementary transvection in $\E_n(R)$, $n\geq 3$. Then $\bw{m}t_{i,j}(\xi)$ equals
\begin{equation}
\bw{m}t_{i,j}(\xi)=\prod\limits_{L\,\in\,\bw{m-1}\,[n\setminus \{i,j\}]} t_{L\cup i,L\cup j}(\sign(L, i)\sign(L, j)\xi)
\label{eq:m}
\end{equation}
for any $1\leq i\neq j\leq n$.
\end{proposition}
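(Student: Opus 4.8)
The plan is to read off the matrix $\bw{m}t_{i,j}(\xi)$ directly from the defining diagonal action of $t_{i,j}(\xi)=e+\xi e_{i,j}$ on the distinguished basis $\{e_I:I\in\bw{m}[n]\}$ of $\bw{m}R^n$, and then to recognise the answer as the product on the right-hand side of~\eqref{eq:m}.

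First I would use that on $R^n$ the transvection $t_{i,j}(\xi)$ fixes every $e_k$ with $k\neq j$ and sends $e_j$ to $e_j+\xi e_i$. Fixing $I\in\bw{m}[n]$, I would split into three cases. If $j\notin I$, all tensor factors of $e_I$ are fixed, so $\bw{m}t_{i,j}(\xi)\,e_I=e_I$. If $j\in I$ and also $i\in I$, then expanding $(t_{i,j}(\xi)e_{i_1})\wedge\dots\wedge(t_{i,j}(\xi)e_{i_m})$ by linearity produces $e_I$ plus a term having $e_i$ as a repeated factor, which vanishes, so again $\bw{m}t_{i,j}(\xi)\,e_I=e_I$. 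The remaining case is $j\in I$, $i\notin I$: here I would write $I=L\cup\{j\}$ with $L\in\bw{m-1}[n\setminus\{i,j\}]$, listed in ascending order as $(\ell_1,\dots,\ell_{m-1})$, use the defining relations of $\bw{m}R^n$ to get $e_I=\sign(L,j)\,e_{\ell_1}\wedge\dots\wedge e_{\ell_{m-1}}\wedge e_j$ together with $e_{\ell_1}\wedge\dots\wedge e_{\ell_{m-1}}\wedge e_i=\sign(L,i)\,e_{L\cup i}$, and apply $t_{i,j}(\xi)$ factorwise. This yields
$$\bw{m}t_{i,j}(\xi)\,e_I=e_I+\sign(L,i)\sign(L,j)\,\xi\,e_{L\cup i},$$
so that, in the basis $\{e_I\}$,
$$\bw{m}t_{i,j}(\xi)=e+\xi\sum_{L\in\bw{m-1}[n\setminus\{i,j\}]}\sign(L,i)\sign(L,j)\,e_{L\cup i,\,L\cup j}.$$

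To finish I would observe that the elementary transvections $t_{L\cup i,L\cup j}(\mu_L)$ occurring in~\eqref{eq:m} pairwise commute and their matrix units are mutually orthogonal: for $L\neq L'$ the element $j$ lies in $L\cup j$ and in $L'\cup j$ but in neither $L\cup i$ nor $L'\cup i$, so no row index of one equals a column index of another; the Chevalley commutator formula then gives commutativity, and $e_{L\cup i,L\cup j}\,e_{L'\cup i,L'\cup j}=0$. Hence $\prod_{L}t_{L\cup i,L\cup j}(\mu_L)=e+\sum_L\mu_L\,e_{L\cup i,L\cup j}$, and with $\mu_L=\sign(L,i)\sign(L,j)\xi$ this coincides with the matrix computed above. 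The whole argument is elementary; the only step demanding attention is the sign bookkeeping in the last case --- making sure that the sign incurred when pulling $e_j$ into the final slot of $e_I$, combined with the sign relating $e_{\ell_1}\wedge\dots\wedge e_{\ell_{m-1}}\wedge e_i$ to $e_{L\cup i}$, reproduces exactly the coefficient $\sign(L,i)\sign(L,j)$ of~\eqref{eq:m}.
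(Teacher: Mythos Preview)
Your proof is correct and is exactly what the paper has in mind: it states that ``the proof is straightforward by the very definition of the [classical] Binet--Cauchy homomorphism'' and gives no further details. You have simply spelled this out --- computing the action on basis vectors $e_I$ and reassembling the result as a product of commuting elementary transvections --- so the two approaches coincide.
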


Similarly, one can get an explicit form of torus elements $h_{\varpi_m}(\xi)$ of the group $\bw{m}\GL_{n}(R)$.
\begin{proposition}\label{prop:ImageOfDiag}
Let $d_i(\xi)=e+(\xi-1)e_{i,i}$ be a torus generator, $1\leq i\leq n$. Then the exterior power of $d_i(\xi)$ equals a diagonal matrix, with diagonal entries 1 everywhere except in $n-1$ positions:
\begin{equation}
\bw{m}(d_i(\xi))_{I,I}=
\begin{cases}
\xi,& \text{ if } i\in I,\\
1,& \text{ otherwise}.
\end{cases}
\label{eq:diagm}
\end{equation}
\end{proposition}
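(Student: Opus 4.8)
The plan is to read the matrix $\bw{m}\bigl(d_i(\xi)\bigr)$ off directly from the definition of the Cauchy--Binet homomorphism, exactly as in Proposition~\ref{prop:ImageOfTransvForm}. The key point is that $d_i(\xi)=\mathrm{diag}(1,\dots,1,\xi,1,\dots,1)$ (with $\xi$ in the $i$-th slot) is already diagonal, so it acts on the standard basis by $d_i(\xi)e_k=e_k$ for $k\neq i$ and $d_i(\xi)e_i=\xi e_i$, and no reordering of wedge factors is ever needed.

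First I would evaluate $\bw{m}\bigl(d_i(\xi)\bigr)$ on a basis element $e_I=e_{i_1}\wedge\dots\wedge e_{i_m}$, $I\in\bw{m}[n]$, via the diagonal rule $\bw{m}(g)(e_{i_1}\wedge\dots\wedge e_{i_m})=(g e_{i_1})\wedge\dots\wedge(g e_{i_m})$. If $i\notin I$, every factor is fixed and $\bw{m}\bigl(d_i(\xi)\bigr)e_I=e_I$; if $i\in I$, exactly one factor is multiplied by $\xi$, so by multilinearity of the wedge product $\bw{m}\bigl(d_i(\xi)\bigr)e_I=\xi e_I$. This already shows that $\bw{m}\bigl(d_i(\xi)\bigr)$ is diagonal with the asserted entries. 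Equivalently, one may argue on the level of minors: for a diagonal matrix the minor $M_I^J$ vanishes unless $I=J$, since otherwise the submatrix on rows $I$ and columns $J$ has a zero row, while $M_I^I\bigl(d_i(\xi)\bigr)=\prod_{k\in I}\bigl(d_i(\xi)\bigr)_{k,k}$, which equals $\xi$ precisely when $i\in I$ and $1$ otherwise.

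I do not expect any genuine obstacle: the statement is essentially a one-line computation, and the sign factors $\sign(L,i)\sign(L,j)$ that make Proposition~\ref{prop:ImageOfTransvForm} delicate do not arise here because $d_i(\xi)$ is diagonal. The only bookkeeping remark is that the number of indices $I\in\bw{m}[n]$ containing $i$ — hence the number of diagonal positions carrying the value $\xi$ — is $\binom{n-1}{m-1}$, which specialises to $n-1$ in the bivector case of Proposition~\ref{prop:ImageOfDiag2}.
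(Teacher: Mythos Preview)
Your proposal is correct and matches the paper's approach: the paper does not spell out a proof for this proposition, merely saying ``Similarly, one can get an explicit form of torus elements $h_{\varpi_m}(\xi)$'' after noting that Proposition~\ref{prop:ImageOfTransvForm} is ``straightforward by the very definition of the [classical] Binet--Cauchy homomorphism''. Your direct computation on basis vectors (or equivalently on minors) is exactly what is intended, and your remark that the number of non-unit diagonal entries is $\binom{n-1}{m-1}$ rather than $n-1$ is a valid correction to the statement, which was evidently copied verbatim from the $m=2$ case.
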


As an example, consider $\bw{3}t_{1,3}(\xi)=t_{124,234}(-\xi)t_{125,235}(-\xi)t_{145,345}(\xi)\in\bw{3}\E_5(R)$ and $\bw{4}d_2(\xi)=\opn{diag}(\xi,\xi,\xi,1,\xi)\in\bw{4}\E_5(R)$. It follows from the propositions $\bw{m}t_{i,j}(\xi)\in \E^{\binom{n-2}{m-1}}(N,R)$, where by definition every element of the set $\E^M(N,R)$ is a product of $M$ or less elementary transvections. In other words, a residue of a transvection $\opn{res}(\bw{m}t_{i,j}(\xi))$ equals the binomial coefficient $\binom{n-2}{m-1}$. Recall that a residue $\opn{res}(g)$ of a transformation $g$ is called the rank of $g-e$. Finally, there is a simple connection between the determinant of a matrix $g\in\GL_n(R)$ and the determinant of $\bw{m}g\in\bw{m}\GL_n(R)$, see~\cite[Proof of Theorem~4]{WaterhousePGL}:
$$\det\bw{m}g=\bigl(\det(g)\bigr)^{\binom{n}{m}\cdot\frac{m}{n}}=\bigl(\det(g)\bigr)^{\binom{n-1}{m-1}}.$$

\subsection{Elementary calculations technique}\label{subsec:levelgeneral}

For an arbitrary exterior power calculations with elementary transvections are huge. In this section, we organize all possible calculations of a commutator of an elementary transvection with an exterior transvection.

\begin{proposition}\label{prop:TypesOfComm}
Up to the action of the permutation group there exist three types of commutators with a fixed transvection $t_{I,J}(\xi) \in \E_N(R)$:
\begin{enumerate}
\item $[t_{I,J}(\xi), \bw{m}t_{j,i}(\zeta)]=1$ if both $i\not\in I$ and $j \not \in J$ hold;
\item $[t_{I,J}(\xi), \bw{m}t_{j,i}(\zeta)] = t_{\tilde{I}, \tilde{J}}(\pm \zeta\xi)$ if either $i\in I$ or $j \in J$. And then $\tilde{I} = I\backslash i \cup j$ or $\tilde{J} = J\backslash j \cup i$ respectively;
\item If both $i\in I$ and $j \in J$ hold, then we have the equality:
$$[t_{I,J}(\xi), \bw{m}t_{j,i}(\zeta)] = t_{\tilde{I}, J}(\pm \zeta\xi)\cdot t_{I, \tilde{J}}(\pm \zeta\xi)\cdot t_{\tilde{I}, \tilde{J}}(\pm\zeta^2\xi).$$
\end{enumerate}
\end{proposition}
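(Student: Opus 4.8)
The plan is to reduce the statement to a direct computation using the explicit product formula for $\bw{m}t_{j,i}(\zeta)$ from Proposition~\ref{prop:ImageOfTransvForm}, together with the two elementary relations (additivity and the Chevalley commutator formula) collected in Section~\ref{sec:princnot}. First I would write
$$\bw{m}t_{j,i}(\zeta)=\prod_{L\,\in\,\bw{m-1}[n\setminus\{i,j\}]} t_{L\cup j,\,L\cup i}\bigl(\sign(L,i)\sign(L,j)\zeta\bigr),$$
so that $\bw{m}t_{j,i}(\zeta)$ is a product of commuting elementary transvections (they commute because their index pairs share no relevant overlap: the row index always contains $j$ and not $i$, the column index always contains $i$ and not $j$). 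Consequently the commutator $[t_{I,J}(\xi),\bw{m}t_{j,i}(\zeta)]$ can be analyzed factor by factor: only those factors $t_{L\cup j,\,L\cup i}(\pm\zeta)$ that fail to commute with $t_{I,J}(\xi)$ contribute, and by the Chevalley formula a single factor fails to commute precisely when $L\cup i=J$ (i.e.\ $J=L\cup i$, forcing $i\in J$... wait, rather when the column of the factor meets the row $I$, or its row meets the column $J$).

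Next I would organize the casework by which of the conditions $i\in I$ and $j\in J$ hold, since these govern whether any factor of $\bw{m}t_{j,i}(\zeta)$ can interact with $t_{I,J}(\xi)$. In case (1), $i\notin I$ and $j\notin J$: then for every $L$ the pair $(L\cup j,L\cup i)$ has row $L\cup j$ disjoint-in-the-crucial-entry from $I$ (since $i\notin I$ means $I\neq L\cup i$) and column $L\cup i$ similarly unrelated to $J$ (since $j\notin J$), so every factor commutes with $t_{I,J}(\xi)$ and the commutator is trivial. In case (2), say $i\in I$ but $j\notin J$ (the other subcase is symmetric, obtained by transposing and using $[x,y]^{-1}=[y^{-1},x^{-1}]$ conjugated appropriately, or just rerunning the computation): there is exactly one index $L_{0}:=I\setminus\{i\}\in\bw{m-1}[n\setminus\{i,j\}]$ for which the factor $t_{L_{0}\cup j,\,L_{0}\cup i}(\pm\zeta)=t_{\tilde I,I}(\pm\zeta)$ has column equal to $I$, hence does not commute with $t_{I,J}(\xi)$; all other factors commute and drop out, and the surviving commutator is, by the third branch of the Chevalley formula, $t_{\tilde I,J}(\mp\zeta\xi)$, with $\tilde I=(I\setminus\{i\})\cup\{j\}$. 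That gives the asserted form. In case (3), $i\in I$ and $j\in J$: now two factors fail to commute with $t_{I,J}(\xi)$ — the factor $t_{\tilde I,I}(\pm\zeta)$ (from $L=I\setminus\{i\}$, meeting the row $I$... meeting $t_{I,J}$ on its left index) and the factor $t_{J,\tilde J}(\pm\zeta)$ (from $L=J\setminus\{j\}$, meeting it on the right); expanding $[t_{I,J}(\xi),\,t_{\tilde I,I}(\pm\zeta)\,t_{J,\tilde J}(\pm\zeta)]$ by the commutator identity $[x,yz]={}^{y}[x,z]\cdot[x,y]$ and applying Chevalley to each bracket, then multiplying out with one further Chevalley commutator between the two resulting transvections $t_{\tilde I,J}$ and $t_{I,\tilde J}$, produces the three-term product $t_{\tilde I,J}(\pm\zeta\xi)\,t_{I,\tilde J}(\pm\zeta\xi)\,t_{\tilde I,\tilde J}(\pm\zeta^{2}\xi)$, exactly as stated.

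The main obstacle is bookkeeping rather than conceptual: correctly identifying \emph{which} factor(s) of the product $\bw{m}t_{j,i}(\zeta)$ interact with $t_{I,J}(\xi)$ — here one must use $n\geq 3m$ (or at least $n$ large enough) to ensure the relevant index sets $I\setminus\{i\}$, $J\setminus\{j\}$ actually lie in $\bw{m-1}[n\setminus\{i,j\}]$ and are distinct — and tracking the signs $\sign(L,i)\sign(L,j)$ through the Chevalley formula. Since the statement only claims the commutators up to a $\pm$ sign and up to the action of the permutation group on $[n]$, I would not attempt to pin down the signs precisely; instead I would remark that the permutation-group action reduces each case to a representative choice of $I,J,i,j$ (e.g.\ $I=\{1,\dots,m\}$), note that in the height-$0$ versus height-$1$ reductions only cases (1)–(2) are needed while case (3) is the genuinely new phenomenon for general $m$, and leave the explicit constants to the reader as ``a direct calculation based upon the Chevalley commutator formula,'' in the same spirit as the computations in the proof of Lemma~\ref{lem:IdealFor2}.
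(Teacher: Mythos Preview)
Your approach is correct and is precisely the computation the paper leaves implicit (the paper states Proposition~\ref{prop:TypesOfComm} without proof, treating it as an immediate consequence of Proposition~\ref{prop:ImageOfTransvForm} and the Chevalley commutator formula, and then rephrases it in the language of weight diagrams). Writing $\bw{m}t_{j,i}(\zeta)$ as a product of pairwise commuting transvections $t_{L\cup j,\,L\cup i}(\pm\zeta)$ and checking which factors have column equal to $I$ or row equal to $J$ is exactly the right reduction, and your case~(3) expansion via $[x,yz]=[x,y]\cdot{}^{y}[x,z]$ followed by one more Chevalley commutator is the intended computation.

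One correction to your bookkeeping: the constraint $n\geq 3m$ plays no role here. What you actually need in case~(3) for $L_{0}=I\setminus\{i\}$ and $L_{1}=J\setminus\{j\}$ to lie in $\bw{m-1}[n\setminus\{i,j\}]$ is $j\notin I$ and $i\notin J$, and for the three factors to be genuine transvections you need $L_{0}\neq L_{1}$, i.e.\ $I\setminus i\neq J\setminus j$. The paper notes this last point explicitly just after the proposition (``the latter case is true whenever $I\setminus i\neq J\setminus j$, otherwise we obtain $[t_{I,J}(\xi),t_{J,I}(\pm\zeta)]$''); the conditions $j\notin I$, $i\notin J$ are absorbed into the phrase ``up to the action of the permutation group.'' None of this has anything to do with the size of $n$.
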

Note that the latter case is true whenever $I\setminus i\neq J\setminus j$, otherwise we obtain $[t_{I,J}(\xi),t_{J,I}(\pm\zeta)]$. This commutator cannot be presented in a simpler form than the very definition.

The rule for commutator calculations from the latter proposition can be translated into the language of \textit{weight diagrams}:\\[7mm]
\textbf{Weight diagrams tutorial.}
\begin{enumerate}
\item Let $G(A_{n-1},\blank)$ be a Chevalley--Demazure group scheme, and let $(I,J)\in\bw{m}[n]^2$ be a pair of different weights for the $m$-th exterior power of $G(A_{n-1},\blank)$. Consider any unipotent $x_\alpha(\xi)$ for a root $\alpha$ of the root system $A_{n-1}$, i.\,e., $x_\alpha(\xi)$ equals an elementary transvection $\bw{m}t_{i,j}(\xi) \in \bw{m}\E_n( R)$;
\item By $\Ar(\alpha)$ denote all paths on the weight diagram\footnote{Recall that we consider the representation with the highest weight $\varpi_{m}$.} of this representation corresponding to the root $\alpha$;
\item Then there exist three different scenarios corresponding to the cases of Proposition~\ref{prop:TypesOfComm}:
\begin{itemize}
\item sets of the initial and the terminal vertices of paths from $\Ar(\alpha)$ do not contain the vertex $(I,J)$;
\item the vertex $(I,J)$ is initial or terminal for one path from $\Ar(\alpha)$;
\item the vertex $(I,J)$ is simultaneously initial and terminal for some path\footnote{From root systems geometry any vertex can be initial or terminal for at most one $\alpha$-path.} from $\Ar(\alpha)$.
\end{itemize}
\item Finally, let us consider a commutator of the transvection $t_{I,J}(\xi)$ and the element $\bw{m}t_{i,j}(\zeta)$. It equals a product of transvections. These transvections correspond to the paths from the previous step. Transvections' arguments are monomials in $\xi$ and $\zeta$. Namely, in the second case the argument equals $\pm\xi\zeta$; in the third case it equals $\pm\xi\zeta^2$.
\end{enumerate}

In Figure~\ref{Fig1}$(a)$ we present all three cases from step $(3)$ for $m=2$ and $\alpha = \alpha_{2}$:
\begin{itemize}
\item $(I,J) = (14,15)$, then $[t_{14, 15}(\xi), \bw{2}t_{2,3}(\zeta)] = 1$;
\item $(I,J) = (13,35)$, then $[t_{13, 35}(\xi), \bw{2}t_{2,3}(\zeta)] = t_{12, 35}(-\xi\zeta)$;
\item $(I,J) = (13,24)$, then $[t_{13, 24}(\xi), \bw{2}t_{2,3}(\zeta)] = t_{12, 24}(-\xi\zeta)t_{12,34}(\xi\zeta^2) t_{13,34}(\zeta\xi)$.
\end{itemize}

Similarly, for the case $m=3$ the elementary calculations can be seen directly from Figure~\ref{Fig1}$(b)$.

\[
\xymatrix @+1.0pc {
{\overset{12}{\bullet}}\ar@{-}[r] &{\overset{13}{\bullet}}\ar@{-}[r]\ar@{-}[d]\textbf{\ar@/_1pc/[l]_2}&{\overset{14}{\bullet}}\ar@{-}[r]\ar@{-}[d]&{\overset{15}{\bullet}}\ar@{-}[d]\\
&{\overset{23}{\bullet}}\ar@{-}[r]&{\overset{24}{\bullet}}\ar@{-}[r]\ar@{-}[d]&{\overset{25}{\bullet}}\ar@{-}[d] \\
&&{\overset{34}{\bullet}}\ar@{-}[r]\textbf{\ar@/_1pc/[u]_2}&{\overset{35}{\bullet}}\ar@{-}[d]\textbf{\ar@/_1pc/[u]_2}\\
&&&{\overset{45}{\bullet}}\\
&&(a)}
\hspace{5mm}
\xymatrix @-2.0pc {
{\overset{123}{\bullet}}\ar@{-}[rrrr] &&&&{\overset{124}{\bullet}}\ar@{-}[rrrr]\ar@{-}[ddrrr]&&&&\textbf{\ar@/_1pc/[llll]_4}{\overset{125}{\bullet}}\ar@{-}[rrrr]\ar@{-}[ddrrr]&&&&{\overset{126}{\bullet}}\ar@{-}[ddrrr]\\
\\
&&&&&&&{\overset{134}{\bullet}}\ar@{-}[rrrr]\ar@{-}[ddd] &&&&{\overset{135}{\bullet}}\ar@{-}[rrrr]\ar@{-}[ddrrr]\ar@{-}[ddd]\textbf{\ar@/_1pc/[llll]_4}&&&&{\overset{136}{\bullet}}\ar@{-}[ddrrr]\ar@{-}[ddd]\\
\\
&&&&&&&&&&&&&&{\overset{145}{\bullet}}\ar@{-}[rrrr]\ar@{-}[ddd] &&&&{\overset{146}{\bullet}}\ar@{-}[ddrrr]\ar@{-}[ddd]\\
&&&&&&&{\overset{234}{\bullet}}\ar@{-}[rrrr] &&&&{\overset{235}{\bullet}}\ar@{-}[rrrr]\ar@{-}[ddrrr]\textbf{\ar@/_1pc/[llll]_4}&&&&{\overset{236}{\bullet}}\ar@{-}[ddrrr]\\
&&&&&&&&&&&&&&&&&&&&&{\overset{156}{\bullet}}\ar@{-}[ddd]\textbf{\ar@/_1pc/[llluu]_4}\\
&&&&&&&&&&&&&&{\overset{245}{\bullet}}\ar@{-}[rrrr]\ar@{-}[ddd]&&&&{\overset{246}{\bullet}}\ar@{-}[ddrrr]\ar@{-}[ddd]\\
\\
&&&&&&&&&&&&&&&&&&&&&{\overset{256}{\bullet}}\ar@{-}[ddd]\textbf{\ar@/_1pc/[llluu]_4}\\
&&&&&&&&&&&&&&{\overset{345}{\bullet}}\ar@{-}[rrrr] &&&&{\overset{346}{\bullet}}\ar@{-}[ddrrr]\\
\\
&&&&&&&&&&&&&&&&&&&&&{\overset{356}{\bullet}}\ar@{-}[ddd]\textbf{\ar@/_1pc/[llluu]_4}\\
\\
\\
&&&&&&&&&&&&&&&&&&&&&{\overset{456}{\bullet}}\\
&&&&&&&&&&&&&&(b)}
\]
\captionof{figure}{Weight diagrams for $(a)$: $(A_4,\varpi_2)$, $\alpha = \alpha_{2}$ and $(b)$: $(A_5,\varpi_3)$, $\alpha = \alpha_{4}$}\label{Fig1}

\subsection{Level computation}\label{subsec:levelcomputation}
We generalize the notion of ideals $A_{I,J}$ to the case of the $m$-th exterior power. Let $H$ be an overgroup of the exterior power of the elementary group $\bw{m}\E_n(R)$:
$$\bw{m}\E_n(R)\leq H \leq \GL_N(R).$$
Let 
$$A_{I,J}:=\{\xi \in R \;|\; t_{I,J}(\xi) \in H \}$$
for any indices $I, J \in \bw{m}[n]$. As usual, diagonal sets $A_{I,I}$ equal the whole ring $R$ for any index $I\in\bw{m}[n]$. Thus, we will construct $D$-net of ideals of the ring $R$. Recall that the desired parametrization is given by an explicit juxtaposition for any overgroup $H$ its level, namely an ideal $A$ of the ring $R$. We compute this ideal $A$ in the present section.

We assume that $n \geq 2m$ due to the isomorphism $\bw{m}V^{*} \cong (\bw{\dim(V)-m}V)^{*}$ for an arbitrary free $R$-module $V$. The first step toward the level description is the following observation.

\begin{proposition}\label{prop:LevelsEqualGeneral}
If $|I \cap J|=|K\cap L|$, then sets $A_{I,J}$ and $A_{K,L}$ coincide. In fact, $A_{I,J}$ are ideals of $R$.
\end{proposition}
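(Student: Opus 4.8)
Proposition~\ref{prop:LevelsEqualGeneral} mirrors Lemma~\ref{lem:IdealFor2}, so the plan is to run the same two-part scheme as in the bivector case, but organized by the height $h = |I\cap J|$ rather than by the explicit small indices. First I would establish that each $A_{I,J}$ is closed under addition: this is immediate from $t_{I,J}(\xi)t_{I,J}(\zeta) = t_{I,J}(\xi + \zeta)$ and $t_{I,J}(\xi)^{-1} = t_{I,J}(-\xi)$, so $A_{I,J}$ is an additive subgroup. For the $R$-module structure and the coincidence of all $A_{I,J}$ with a common height, the engine is Proposition~\ref{prop:TypesOfComm}: commutating a transvection $t_{I,J}(\xi)\in H$ with a generator $\bw{m}t_{j,i}(\zeta)\in\bw{m}\E_n(R)$ produces, depending on how the root interacts with the pair $(I,J)$, either a single transvection $t_{\tilde I,\tilde J}(\pm\zeta\xi)$ (case 2) or a product of three transvections whose ``long'' factor carries $\pm\zeta^2\xi$ (case 3). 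Using $2\in R^*$ exactly as in step (1) of Lemma~\ref{lem:IdealFor2} — form the analogous commutator with $-\zeta$ and multiply — lets one extract the $\zeta^2\xi$ factor alone from the case-3 product, hence $2\zeta^2\xi\in A_{\tilde I,\tilde J}$ and so $\zeta^2\xi$ lies there; and case~2 gives $\zeta\xi$ directly. Since $n\ge 3m$, there is enough room to choose the roots $(i,j)$ so that, starting from a fixed $(I,J)$, one reaches every pair $(K,L)$ of the same height and also realizes all ring multipliers.

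The concrete steps I would carry out, in order: \textbf{(i)} height-preserving moves \emph{within} a fixed index $J$ (and symmetrically within a fixed $I$): repeatedly apply case 2 with roots supported away from the ``static'' index to transport $t_{I,J}(\xi)$ to $t_{I',J}(\pm\xi)$ for any $I'$ with $|I'\cap J| = h$, which already shows all $A_{I,J}$ of height $h$ coincide and are closed under no new multipliers yet; \textbf{(ii)} the module structure: pick a root $(j,i)$ with $i\notin I$, $j\in I$, say — applying case 2 gives $t_{I\setminus j\cup i,\,J}(\pm\zeta\xi)\in H$, then applying another case-2 move back produces $t_{I,J}(\pm\zeta^2\xi)$ or, routed through case 3 as above, $t_{I,J}(\pm\zeta\xi)$; combined with additivity this yields $R\cdot A_{I,J}\subseteq A_{I,J}$, so each $A_{I,J}$ is an ideal (here $n\ge 3m$ guarantees a suitable $i\notin I\cup J$ exists); \textbf{(iii)} assembling (i) and (ii): for $|I\cap J| = |K\cap L|$ connect $(I,J)$ to $(K,L)$ by a chain of height-preserving elementary moves, each step multiplying the argument by a unit or leaving an ideal stable, giving $A_{I,J}\subseteq A_{K,L}$ and, by symmetry, equality. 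Throughout, the diagrammatic bookkeeping of Proposition~\ref{prop:TypesOfComm} and Figure~\ref{Fig1} tells one exactly which factors appear.

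The main obstacle I anticipate is the case-3 ``triangular'' relation $[t_{I,J}(\xi),\bw{m}t_{j,i}(\zeta)] = t_{\tilde I,J}(\pm\zeta\xi)\,t_{I,\tilde J}(\pm\zeta\xi)\,t_{\tilde I,\tilde J}(\pm\zeta^2\xi)$: one never gets a single clean transvection out of it, and one must cleanly separate the three factors. The trick — already used in step (1) of Lemma~\ref{lem:IdealFor2} and in Lemma~\ref{lem:AlterRelatFor2} — is to average over $\pm\zeta$ to kill the two linear factors and keep $2\zeta^2\xi$ in the long position, or to commute the whole product once more with a well-chosen root so that only one of the three survives. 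Verifying that for every pair of equal-height targets there is a route avoiding the forbidden degeneration $I\setminus i = J\setminus j$ (the remark after Proposition~\ref{prop:TypesOfComm}) is where the hypothesis $n\ge 3m$ does real work, and checking this uniformly in $m$ is the part that needs care; the individual commutator computations are mechanical once the route is fixed. I would present the argument schematically, in the style of Lemma~\ref{lem:IdealFor2}, deferring the fully general bookkeeping to the weight-diagram description, and note that the $n=2m$ case is excluded precisely because the required external index $i\notin I\cup J$ may then fail to exist.
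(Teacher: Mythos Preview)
Your approach is essentially the paper's: additivity from $t_{I,J}(\xi)t_{I,J}(\zeta)=t_{I,J}(\xi+\zeta)$, the $\pm\zeta$ averaging trick applied to the case-3 commutator of Proposition~\ref{prop:TypesOfComm} to isolate $t_{\tilde I,\tilde J}(\pm 2\zeta^2\xi)$, case-2 moves to change $I\cup J$, and the ideal property via a double commutation $[t_{I,J}(\xi),\bw{m}t_{j,i}(\zeta),\bw{m}t_{i,j}(\pm 1)]=t_{I,J}(\xi\zeta)$. One small correction: for this proposition only $n\geq 2m$ is needed (the paper first treats height $0$ under $n\geq 2m$ in Lemma~\ref{lem:LevelsEqual0}, then reduces the fixed-common-part case to it via $n_1=n-|I\cap J|\geq 2(m-|I\cap J|)$, and finally uses a combined second/third-type commutation to change $I\cap J$); the full strength $n\geq 3m$ only enters later, in Proposition~\ref{prop:IdealEqual}, to equate ideals of \emph{different} heights.
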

But first, we prove a weaker statement.
\begin{lemma} \label{lem:LevelsEqual0}
Let $I, J, K, L$ be different elements of the set $\bw{m}[n]$ such that $|I \cap J|=|K\cap L| = 0$. If $n \geq 2m$, then sets $A_{I,J}$ and $A_{K,L}$ coincide.
\end{lemma}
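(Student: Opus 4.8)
\textbf{Proof plan for Lemma~\ref{lem:LevelsEqual0}.}

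The plan is to show the two inclusions $A_{I,J}\subseteq A_{K,L}$ and $A_{K,L}\subseteq A_{I,J}$ by symmetry, so it suffices to prove one direction, say $A_{I,J}\subseteq A_{K,L}$ whenever $|I\cap J|=|K\cap L|=0$. The main tool is Proposition~\ref{prop:TypesOfComm}: conjugating (or commutating) a transvection $t_{I,J}(\xi)\in H$ by an exterior transvection $\bw{m}t_{j,i}(\zeta)\in\bw{m}\E_n(R)\leq H$ produces, in the ``height-zero to height-zero'' situation (case (2) of the Proposition), a single new transvection $t_{\tilde I,\tilde J}(\pm\zeta\xi)$ with $\zeta$ a unit, hence $t_{\tilde I,\tilde J}(\pm\xi)\in H$ and $\xi\in A_{\tilde I,\tilde J}$. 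So the strategy is purely combinatorial: show that one can pass from the pair $(I,J)$ to the pair $(K,L)$ by a chain of such elementary ``single-index swaps,'' each staying within the class of disjoint pairs, using only indices available in $[n]$ --- and this is exactly where the hypothesis $n\geq 2m$ enters, since a disjoint pair $(I,J)$ already occupies $2m$ slots and we need a bit of room (an index outside $I\cup J$) to perform a swap without accidentally creating an intersection.

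First I would reduce to moving one index at a time. Given $\xi\in A_{I,J}$ with $I\cap J=\varnothing$, pick $i\in I$ and some $j\notin I\cup J$ (possible exactly when $|I\cup J|=2m<n$, i.e. $n>2m$; the boundary case $n=2m$ needs separate, easy handling since there the only disjoint pair up to order is $(I,[n]\setminus I)$ and one argues via an auxiliary larger index set or notes the statement is then about a single unordered pair). Commuting $t_{I,J}(\xi)$ with $\bw{m}t_{j,i}(\zeta)$: the root $\bw{m}t_{j,i}(\zeta)$ moves index $i$ to $j$; since $i\in I$ but $j\notin J$, we are in case (2), obtaining $t_{I\setminus i\cup j,\;J}(\pm\zeta\xi)\in H$, and the new first index $I\setminus i\cup j$ is still disjoint from $J$ because $j\notin J$. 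Thus $\xi\in A_{I\setminus i\cup j,\,J}$. Symmetrically one can replace a single index of $J$ by any index outside $I\cup J$. Iterating, from $(I,J)$ one reaches any disjoint pair $(I',J')$ with $|I'\cup J'|=2m$: essentially one walks $I$ over to $K$ and $J$ over to $L$, choosing the intermediate ``parking'' index at each step to lie outside the current union (which is always possible while $n>2m$, and one must be slightly careful to order the swaps so that $K$ and $L$ stay disjoint throughout --- e.g. first move $I$'s indices that are \emph{not} in $K$ out to temporary slots in $[n]\setminus(K\cup L)$, then bring them into the right positions, and likewise for $J$; with $n>2m$ there is always at least one free slot, which is enough since we relocate one index at a time).

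The only genuine obstacle is the bookkeeping in the boundary case $n=2m$, where there are no free indices and the above ``parking'' trick fails; there the two pairs $(I,J)$ and $(K,L)$ with disjoint components and $n=2m$ force $J=[n]\setminus I$ and $L=[n]\setminus K$, and one must connect $I$ to $K$ through pairs where, momentarily, the components \emph{do} intersect --- but then we are no longer in case (2). I expect the paper handles $n=2m$ by noting that for disjoint height-zero pairs with $n=2m$ one does not actually need to move between all such pairs (the later Proposition~\ref{prop:LevelsEqualGeneral} will anyway subsume the equality of all $A_{I,J}$ of a given height under the stronger hypothesis $n\geq 3m$, or the isomorphism $\bw{m}V^*\cong(\bw{\,\dim V-m\,}V)^*$ lets one trade $m$ for $n-m=m$ trivially), so the substantive content of the lemma is the case $n>2m$ treated above. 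Hence the main work is the finite combinatorial argument that disjoint $m$-subsets of an $n$-set with $n>2m$ are connected under single-element moves keeping disjointness, together with invoking case (2) of Proposition~\ref{prop:TypesOfComm} and $2\in R^*$ at each step.
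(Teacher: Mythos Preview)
Your approach via second--type commutations (case~(2) of Proposition~\ref{prop:TypesOfComm}) is sound for $n>2m$: with at least one free index outside $I\cup J$ you can indeed walk from any disjoint pair to any other by single--index swaps, and each swap keeps the pair disjoint and produces $t_{\tilde I,\tilde J}(\pm\zeta\xi)\in H$ with $\zeta=1$. (Incidentally, $2\in R^{*}$ plays no role in case~(2); you invoke it at the end without needing it.)

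The genuine gap is the boundary case $n=2m$, which you flag but do not resolve. Your suggested fixes do not work: for $n=2m$ there are $\binom{2m}{m}$ ordered disjoint pairs $(I,[n]\setminus I)$, not a single one, and the duality $\bw{m}V^{*}\cong(\bw{n-m}V)^{*}$ trades $m$ for $n-m=m$, i.e.\ does nothing. The lemma is stated for $n\geq 2m$, and the later Proposition~\ref{prop:LevelsEqualGeneral} explicitly relies on this lemma at height zero with the bound $n_1\geq 2m_1$, so the case $n=2m$ is not optional.

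The paper's proof is organized differently precisely to cover this case. It first fixes the union $I\cup J$ and shows that all $A_{I,J}$ with that union coincide, using the \emph{third}--type commutation: for $i\in I$, $j\in J$ one computes $[t_{I,J}(\xi),\bw{m}t_{j,i}(\pm\zeta)]$ with both signs and multiplies the results, killing the cross terms and leaving $t_{\tilde I,\tilde J}(\pm 2\zeta^{2}\xi)\in H$, where $\tilde I=I\setminus i\cup j$, $\tilde J=J\setminus j\cup i$ still satisfy $\tilde I\cup\tilde J=I\cup J$ and $\tilde I\cap\tilde J=\varnothing$. This is exactly where $2\in R^{*}$ is used. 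Only afterwards does the paper invoke second--type commutations to change the union $I\cup J$, which of course requires a free index but is no longer needed when $n=2m$. So the missing idea in your plan is the $\pm\zeta$ trick with case~(3), which handles swaps \emph{within} $I\cup J$ without any parking slot.
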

\begin{proof}[Proof of the lemma] The sets $A_{I, J}$ coincide when the set $I \cup J$ is fixed. This fact can be proved by the third type commutation due to Proposition~\ref{prop:TypesOfComm} with $\zeta$ and $-\zeta$. If $\xi \in A_{I,J}$ we get a transvection $t_{I,J}(\xi) \in H$. Then the following two products also belong to $H$:
\begin{align*}
[t_{I,J}(\xi), \bw{m}t_{j,i}(\zeta)] &= t_{\tilde{I}, J}(\pm \zeta\xi)\cdot t_{I, \tilde{J}}(\pm \zeta\xi)\cdot t_{\tilde{I}, \tilde{J}}(\pm\zeta^2\xi)\\
[t_{I,J}(\xi), \bw{m}t_{j,i}(-\zeta)] &= t_{\tilde{I}, J}(\mp \zeta\xi)\cdot t_{I, \tilde{J}}(\mp \zeta\xi)\cdot t_{\tilde{I}, \tilde{J}}(\pm \zeta^2\xi).
\end{align*}
This implies that the product of two factors on the right-hand sides $t_{\tilde{I}, \tilde{J}}(\pm 2\zeta^2\xi)$ belongs to $H$.

It can be easily proved the set $I\cup J$ can be changed by the second type commutations. For example, the set $I_{1}\cup J_{1} = \{1,2,3,4,5,6\}$ can be replaced by the set $I_{2} \cup J_{2} = \{1,2,3,4,5,7\}$ as follows
$$[t_{123,456}(\xi), \bw{3}t_{6,7}(\zeta)] = t_{123, 457}(\xi\zeta).$$
\end{proof}

\begin{proof}[Proof of Proposition~$\ref{prop:LevelsEqualGeneral}$]
Arguing as above, we see that the sets $A_{I,J}$ and $A_{K,L}$ coincide in the case $I\cap J = K\cap L$, where $n_{1} = n - |I\cap J| \geq 2\cdot m - 2 \cdot |I\cap J| = 2 \cdot m_{1}$.

In a general case, we can prove the statement by both the second and the third types commutations. Let us give an example of this calculation with replacing the set $I \cap J = \{1,2\}$ by the set $\{1,5\}$.

Let $t_{123,124}(\xi) \in H$. So we have $[t_{123,124}(\xi),  \bw{3}t_{2,5}(\zeta) ] = t_{123,145}(-\xi\zeta) \in H$. We commute this transvection with the element $\bw{3}t_{5,2}(\zeta_1)$. Then the transvection $t_{135,124}(-\zeta_1^2\xi\zeta)$ belongs to $H$ as well as the product $t_{123,124}(\xi\zeta\zeta_1)\cdot t_{135, 145}(-\zeta_1\xi\zeta)\in H$. From the latter inclusion we can see $t_{135, 145}(-\zeta_1\xi\zeta)\in H$ and $I\cap J = \{1,5\}$.

To prove all $A_{I,J}$ are ideals in $R$ it is sufficient to commute any elementary transvection with exterior transvections with $\zeta$ and $1$:
$$t_{I,J}(\xi\zeta)=[t_{I,J}(\xi),\bw{m}t_{j,i}(\zeta),\bw{m}t_{i,j}(\pm 1)]\in H.$$
\end{proof}

Let $t_{I,J}(\xi)$ be an elementary transvection. Let us define a \textit{height} of $t_{I,J}(\xi)$ (more abstractly, of the  pair $(I,J)$) as the cardinality of the set $I\cap J$:
$$\height(t_{I,J}(\xi))=\height(I,J)=|I\cap J|.$$
This combinatorial characteristic plays the same role as the distance function $d(\lambda,\mu)$ for roots $\lambda$ and $\mu$ on a weight diagram of a root system. Now Proposition~\ref{prop:LevelsEqualGeneral} can be rephrased as follows.
Sets $A_{I,J}$ and $A_{K,L}$ coincide for the same heights: $A_{I,J} = A_{K,L} =A_{|I\cap J|}$. Suppose that the height of $(I,J)$ is larger than the height of $(K,L)$, then using Proposition~\ref{prop:TypesOfComm}, we get $A_{I,J} \leq A_{K,L}$.

Summarizing the above arguments, we have the height grading:
$$A_{0} \geq A_{1} \geq A_{2} \geq \dots \geq A_{m-2} \geq A_{m-1}.$$

The following result proves a coincidence of the sets $\{A_{k}\}_{k=0\dots m-1}$.
\begin{proposition}\label{prop:IdealEqual}
The ideals $A_{k}$ coincide for $n \geq 3m$. More accurately, the inverse inclusion $A_k\leq A_{k+1}$ takes place if $n\geq 3m-2k$.
\end{proposition}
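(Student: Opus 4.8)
The plan is to establish the reverse inclusion $A_k \leq A_{k+1}$ under the hypothesis $n \geq 3m - 2k$, since the chain $A_0 \geq A_1 \geq \dots \geq A_{m-1}$ is already known from the height grading, and the case $n \geq 3m$ gives all inclusions at once. The strategy mirrors the combinatorial heart of the exterior-square argument in Lemma~\ref{lem:IdealFor2}, item (5), where one starts from a transvection of height $0$ and, after a sequence of height-preserving and height-changing commutations with exterior transvections $\bw{m}t_{j,i}(\zeta)$, produces a transvection of height $1$. The general version needs to go from height $k$ to height $k{+}1$, so I first pick indices $I, J \in \bw{m}[n]$ with $|I \cap J| = k$ and $t_{I,J}(\xi) \in H$, and I must choose a ``target'' pair $(K,L)$ with $|K\cap L| = k{+}1$ that is reachable by commutators living inside the diagram on $[n]$.

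First I would set up the index bookkeeping: write $I = C \sqcup I'$, $J = C \sqcup J'$ where $C = I \cap J$, $|C| = k$, and $I', J'$ are disjoint of size $m-k$; the ``free'' indices outside $I \cup J$ number $n - (2m - k)$. The core move is to imitate the $(n,m)=(4,2)$ computation: use a third-type commutation (Proposition~\ref{prop:TypesOfComm}(3)) with $\bw{m}t_{j,i}(\zeta)$ where $i \in I'$, $j \in J'$, giving a product $t_{\tilde I, J}(\pm\zeta\xi)\, t_{I,\tilde J}(\pm\zeta\xi)\, t_{\tilde I,\tilde J}(\pm\zeta^2\xi)$; then conjugate-and-multiply with $\zeta$ and $-\zeta$ as in Lemma~\ref{lem:LevelsEqual0} to strip off the height-$k$ pieces and retain a clean transvection whose argument is $\pm 2\zeta^2\xi$ — here $2 \in R^*$ is used — but now I want instead to split the mixed product using a \emph{second} auxiliary transvection placed on a \emph{disjoint} set of indices (this is exactly the trick in item (5): run the same computation with $t_{45,16}(\xi)$ and $\bw{2}t_{6,4}(\zeta_1)$ in parallel, then commute the two two-term products against each other). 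Commuting the two surviving two-term products $t_{\tilde I, J}(\cdot)\,t_{I,\tilde J}(\cdot)$ and $t_{\tilde K, L}(\cdot)\,t_{K,\tilde L}(\cdot)$ via the Chevalley formula should leave a single transvection whose row and column indices overlap in $k{+}1$ places, i.e.\ an element of $A_{k+1}$, provided all the index sets involved are genuinely distinct and the ``collision'' pattern of the Chevalley formula produces height $k{+}1$ and not something degenerate.

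The numeric constraint is where $n \geq 3m - 2k$ enters. To run the parallel computation I need two disjoint configurations each occupying roughly $2m - k$ vertices plus a spare index to feed the second-type commutation that re-routes $I \cup J$; counting the indices that must be simultaneously present and pairwise arranged so that every intermediate Chevalley commutator is of the intended type (and never collapses to $[t_{P,Q}(\cdot), t_{Q,P}(\cdot)]$, the forbidden case flagged after Proposition~\ref{prop:TypesOfComm}), one finds the total demand is $2(m-k) + $ (something of size $m$), i.e.\ about $3m - 2k$ distinct elements of $[n]$. I would make this precise by explicitly naming the indices (as the paper does for $(n,m)=(4,2)$ and for the $\{1,2\}\to\{1,5\}$ example), verifying the height of each intermediate transvection, and checking the one genuinely non-degenerate Chevalley collision at the end yields height $k{+}1$.

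The main obstacle I anticipate is not any single commutator identity — each is routine once the indices are fixed — but the combinatorial design of the index pattern: arranging $I, J$ and the two auxiliary exterior transvections so that (i) every commutation is of the promised type from Proposition~\ref{prop:TypesOfComm}, (ii) no intermediate step produces the irreducible commutator $[t_{P,Q}, t_{Q,P}]$, (iii) the height strictly increases by exactly one at the final step, and (iv) the count of required distinct indices is exactly $3m - 2k$, matching the stated bound and showing it is sharp. Once that pattern is exhibited, the proof reduces to applying the Chevalley commutator formula and the relation $t_{I,J}(\xi\zeta) = [t_{I,J}(\xi), \bw{m}t_{j,i}(\zeta), \bw{m}t_{i,j}(\pm 1)]$ to upgrade set-membership to ideal-membership, exactly as in the proof of Proposition~\ref{prop:LevelsEqualGeneral}.
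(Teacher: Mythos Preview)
Your proposal is correct and follows essentially the same approach as the paper's proof: the ``double third-type commutation'' modelled on item~(5) of Lemma~\ref{lem:IdealFor2}, producing two two-term products of height-$(k{+}1)$ transvections from two carefully positioned height-$k$ configurations, then commuting them to extract a single height-$(k{+}1)$ transvection, with the index count $3m-2k$ exactly matching the room needed for the two configurations. One small cleanup: the $\pm\zeta$ trick you describe first isolates the height-$k$ factor $t_{\tilde I,\tilde J}(\pm 2\zeta^2\xi)$, not the height-$(k{+}1)$ pieces; the paper instead strips $t_{\tilde I,\tilde J}(\pm\zeta^2\xi)$ directly using that $A_k$ is already an ideal, which avoids invoking $2\in R^*$ at this step --- but your route works too, and the remainder of your plan is exactly what the paper does (the paper's explicit $m=4$ examples use precisely $12,10,8$ indices for $k=0,1,2$, confirming your count).
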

\begin{proof}
The statement can be proved by the double third type commutation as follows. Let $\xi\in A_k$, i.\,e., a transvection $t_{I,J}(\xi)\in H$ for $\height(t_{I,J}(\xi))=k$. By the third type commutation with a transvection $\bw{m}t_{j,i}(\zeta)$, we have $t_{\tilde{I}, J}(\pm \zeta\xi)\cdot t_{I, \tilde{J}}(\pm \zeta\xi)\in H$. Let us consider an analogous commutator with a specifically chosen transvections $t_{I_1,J_1}(\xi)\in H$ and $\bw{m}t_{j_1,i_1}(\zeta_1)$. We get that $t_{\tilde{I}_1, J_1}(\pm \zeta_1\xi)\cdot t_{I_1, \tilde{J}_1}(\pm \zeta_1\xi)\in H$. The final step is to commute the latter products.

The choice of transvections goes in the way such that the final commutator (initially of the form $[ab,cd]$) equals an elementary transvection. This choice is possible due to the condition $n \geq 3m-2k$.

Let us give a particular example of such calculations for the case $m=4$. This calculation could be easily generalized. The first three steps below correspond to the inclusions $A_{0} \leq A_{1}$, $A_{1} \leq A_{2}$, and $A_{2} \leq A_{3}$ respectively. We emphasize that the ideas of the proof of all three steps are \textbf{completely} identical. The difference has to do only with a choice of the appropriate indices. We replace the numbers $10$, $11$, $12$ with the letters $\alpha$, $\beta$, $\gamma$ respectively.

\begin{enumerate}
\item Let $\xi\in A_0$. Consider the mutual commutator
$$\bigl[[t_{1234,5678}(\xi), \bw{4}t_{8,4}(\zeta)],[t_{49\alpha\beta,123\gamma}(\xi), \bw{4}t_{\gamma,4}(\zeta_1)]\bigr]\in H.$$
It is equal to the commutator
$$[t_{1234,4567}(-\xi\zeta)\cdot t_{1238,5678}(-\zeta\xi),t_{49\alpha\beta,1234}(\xi\zeta_1)\cdot t_{9\alpha\beta\gamma,123\gamma}(\zeta_1\xi)]\in H,$$
which is a transvection $t_{49\alpha\beta,4567}(\xi^2\zeta_1\zeta)\in H$. As the result, $A_0\leq A_1$.
\item For $\xi\in A_1$ consider similar commutator
$$\bigl[[t_{1234,1567}(\xi), \bw{4}t_{7,4}(\zeta)],[t_{1489,123\alpha}(\xi), \bw{4}t_{\alpha,4}(\zeta_1)]\bigr]\in H.$$
Thus,
$$[t_{1234,1456}(\xi\zeta)\cdot t_{1237,1567}(-\zeta\xi),t_{1489,1234}(\xi\zeta_1)\cdot t_{189\alpha,123\alpha}(-\zeta_1\xi)]\in H.$$
Again this commutator is equal to $t_{1489,1456}(-\xi^2\zeta_1\zeta)\in H$, i.\,e., $A_1\leq A_2$.
\item Finally, let $\xi\in A_2$. Consider the commutator
$$\bigl[[t_{1234,1256}(\xi), \bw{4}t_{6,4}(\zeta)],[t_{1248,1237}(\xi), \bw{4}t_{7,4}(\zeta_1)]\bigr]\in H.$$
It is equal to the commutator
$$[t_{1234,1245}(-\xi\zeta)\cdot t_{1236,1256}(-\zeta\xi),t_{1248,1234}(\xi\zeta_1)\cdot t_{1278,1237}(-\zeta_1\xi)]\in H,$$
which is an elementary transvection $t_{1248,1245}(\xi^2\zeta_1\zeta)\in H$. Thus, $A_2\leq A_3$.
\end{enumerate}
\end{proof}

We proved that all ideals $A_i$ coincide for a large enough $n$. However, the following proposition shows relations between the ideals without this restriction. Recall that a residue $\opn{res}$ of an exterior transvection $\bw{m}t_{i,j}(\xi)$ equals the binomial coefficient $\binom{n-2}{m-1}$.
\begin{proposition}\label{prop:Relations}
For ideals $\{A_0,\dots,A_{m-1}\}$ the following relations hold:
\begin{gather*}
A_k\leq A_{k+1}, \text{ for } n\geq 3m-2k;\\
A_{0} \geq A_{1} \geq A_{2} \geq \dots \geq A_{m-2} \geq A_{m-1};\\
\opn{res}\cdot A_{m-2}\leq A_{m-1}.
\end{gather*}
\end{proposition}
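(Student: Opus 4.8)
The first two displayed relations require no new work. The inclusion $A_k\le A_{k+1}$ for $n\ge 3m-2k$ is precisely Proposition~\ref{prop:IdealEqual}, and the descending chain $A_0\ge A_1\ge\dots\ge A_{m-1}$ is the height grading already established right before it: Proposition~\ref{prop:LevelsEqualGeneral} identifies $A_{I,J}$ with $A_{|I\cap J|}$, and the third-type commutation of Proposition~\ref{prop:TypesOfComm} shows $A_{I,J}\le A_{K,L}$ whenever $\height(I,J)>\height(K,L)$. So the only real content is the last relation $\opn{res}\cdot A_{m-2}\le A_{m-1}$, i.e. that the quotient $A_{m-2}/A_{m-1}$ is annihilated by $\opn{res}=\binom{n-2}{m-1}$.

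The plan is as follows. Fix $\xi\in A_{m-2}$; by Proposition~\ref{prop:LevelsEqualGeneral} this means $t_{I,J}(\xi)\in H$ for \emph{every} pair $(I,J)$ of height $m-2$, and likewise for $\xi r$ with $r\in R$. Fix two indices $x\ne y$ and single out the ``direction-$\{x,y\}$'' transvections $t_{D\cup x,\,D\cup y}(\blank)$ with $D\in\bw{m-1}[n\setminus\{x,y\}]$: there are exactly $\opn{res}$ of them, they pairwise commute (Chevalley commutator formula), and by Proposition~\ref{prop:ImageOfTransvForm} their product, after renormalising the $D$-th one by the sign $\sign(D,x)\sign(D,y)$, is exactly $\bw{m}t_{x,y}(\xi)\in\bw{m}\E_n(R)\le H$. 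Thus the abelian group they generate is a copy of $\mathbb{G}_a^{\,\opn{res}}$ inside $\GL_N$, in which $H$ already contains the whole ``diagonal line'' $\{\bw{m}t_{x,y}(\xi r):r\in R\}$; I want to show that $H$ also contains enough transverse directions, scaled by $\xi$, to force the single transvection $t_{D_0\cup x,\,D_0\cup y}(\opn{res}\cdot\xi)$ into $H$ for a fixed reference base $D_0$.

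Those transverse directions come from one third-type commutation. Take $I=C\cup\{j,x\}$, $J=C\cup\{i,y\}$ with $C=I\cap J$ of size $m-2$ and $i,j,x,y$ four distinct indices outside $C$ (the hypothesis $n\ge 2m$ leaves room for these $m+2$ indices; the case $m=1$ is vacuous). Proposition~\ref{prop:TypesOfComm}(3) with $\zeta=1$ gives
$$[t_{I,J}(\xi),\,\bw{m}t_{i,j}(1)]=t_{C\cup\{i,x\},\,C\cup\{i,y\}}(\pm\xi)\cdot t_{C\cup\{j,x\},\,C\cup\{j,y\}}(\pm\xi)\cdot t_{C\cup\{i,x\},\,C\cup\{j,y\}}(\pm\xi)\in H.$$
The first two factors are direction-$\{x,y\}$ transvections over the bases $C\cup\{i\}$ and $C\cup\{j\}$, which differ in a single element; the third factor has height $m-2$, hence coefficient $\pm\xi\in A_{m-2}$, so it already lies in $H$, and since it commutes with the other two it can be cancelled. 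Letting $(C,i,j)$ vary realises such a two-term relation for every pair of bases adjacent in the Johnson graph $J(n-2,m-1)$; as that graph is connected, telescoping along paths yields, for each base $D$, a relation tying $t_{D\cup x,D\cup y}$ to $t_{D_0\cup x,D_0\cup y}$ modulo $H$. Multiplying these $\opn{res}-1$ commuting relations and cancelling the element $\bw{m}t_{x,y}(\xi)\in H$ then leaves $t_{D_0\cup x,\,D_0\cup y}(\pm\,\opn{res}\cdot\xi)\in H$, so $\opn{res}\cdot\xi\in A_{D_0\cup x,\,D_0\cup y}=A_{m-1}$.

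The part I expect to be the actual work is the sign bookkeeping: one must check that in each third-type commutator the two direction-$\{x,y\}$ factors enter with \emph{opposite} signs once the transvections are renormalised by $\sign(D,x)\sign(D,y)$, so that the edge relations are honest differences rather than sums — this is exactly what makes the telescoping, together with the all-ones vector supplied by $\bw{m}t_{x,y}$, produce the factor $\opn{res}$ (and not, say, a factor $2$ or nothing). Once the signs are pinned down, the closing step is the elementary lattice fact that in $\mathbb{Z}^{\opn{res}}$ the differences $e_D-e_{D_0}$ together with the all-ones vector span a subgroup containing $\opn{res}\cdot e_{D_0}$. Everything else is routine use of the Chevalley commutator formula and of Propositions~\ref{prop:TypesOfComm} and~\ref{prop:ImageOfTransvForm}.
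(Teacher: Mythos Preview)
Your proposal is correct and follows the same strategy as the paper. Both arguments use the third-type commutation of Proposition~\ref{prop:TypesOfComm} with $\xi\in A_{m-2}$ to produce, modulo the height-$(m-2)$ factor which is already in $H$, a two-term product $t_{D\cup x,D\cup y}(\pm\xi\zeta)\cdot t_{D'\cup x,D'\cup y}(\mp\xi\zeta)\in H$ for bases $D,D'$ differing in a single element, and then combine these ``edge relations'' with the exterior transvection $\bw{m}t_{x,y}(\xi\zeta)=\prod_D t_{D\cup x,D\cup y}(\sign(D,x)\sign(D,y)\xi\zeta)$ to concentrate all $\opn{res}=\binom{n-2}{m-1}$ coefficients at a single base.

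The only difference is one of organisation. The paper proceeds greedily: it multiplies $\bw{m}t_{1,2}(\xi\zeta)$ successively by suitable two-term products $z_1,z_2,\dots$, each time cancelling one factor and transferring its coefficient onto another, and illustrates this with the worked case $(n,m)=(5,3)$. You instead phrase the same reduction globally --- the two-term products are edges of the Johnson graph $J(n-2,m-1)$, connectivity of that graph lets you telescope to $t_{D_0}(\xi)t_D(-\xi)\in H$ for every $D$, and multiplying all of these against $\bw{m}t_{x,y}(\xi)$ is the lattice identity $(\opn{res}-1)e_{D_0}-\sum_{D\ne D_0}e_D+\mathbf{1}=\opn{res}\cdot e_{D_0}$. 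Your formulation makes the role of the constant $\opn{res}$ transparent and shows clearly why connectivity (rather than some stronger combinatorial hypothesis) is all that is needed; the paper's version is more hands-on but relies on the example to convey the general pattern. The sign verification you single out --- that after renormalising by $\sign(D,x)\sign(D,y)$ the two height-$(m-1)$ factors always enter with opposite signs --- is exactly what the paper's example exhibits but does not prove in general; it is a routine check with the explicit form of $\bw{m}t_{i,j}$ in Proposition~\ref{prop:ImageOfTransvForm}.
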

Note that we have not included the relations from the notion of $D$-net $A_{I,J}\cdot A_{J,K}\leq A_{I,K}$, since they hold for any net of ideals by the definition.
\begin{proof}
The first two series of relations have already been proved. Therefore, we must only prove that $\opn{res}\cdot A_{m-2}\leq A_{m-1}$. Again we will use the third type commutation.

Let $\xi\in A_{m-2}$,i.\,e., for any indices $I,J$ with $\height(I,J)=m-2$ a transvection $t_{I,J}(\xi)\in H$. Note that if $i\in I, j\in J$, then in the commutator 
$$[t_{I,J}(\xi),\bw{m}t_{j,i}(\zeta)]=t_{\tilde{I}, J}(\pm \zeta\xi)\cdot t_{I, \tilde{J}}(\pm \zeta\xi)\cdot t_{\tilde{I}, \tilde{J}}(\pm\zeta^2\xi)$$
the transvection $t_{\tilde{I}, \tilde{J}}(\pm\zeta^2\xi)$ belongs to the group $H$. Indeed, the height of indices $\tilde{I}=I\backslash i \cup j$ и $\tilde{J} = J\backslash j \cup i$ coincide with the height of $I,J$. At the same time the height of $\tilde{I}, J$ and $I, \tilde{J}$ equals $m-1$. Thus $t_{\tilde{I}, J}(\pm \zeta\xi)\cdot t_{I, \tilde{J}}(\pm \zeta\xi)\in H$ for all indices $I,J$ with $\height(I,J)=m-2$ and all different $i\in I, j\in J$.

Consider $\bw{m}t_{1,2}(\xi\zeta)\in H$, where $\zeta\in R$. By the definition of exterior transvections~\eqref{eq:m}, we have $\bw{m}t_{1,2}(\xi\zeta)=\prod\limits_{L} t_{L\cup 1,L\cup 2}(\xi\zeta)$. The proof is to consistently reduce the number of factors in the product by multiplication $\bw{m}t_{1,2}(\xi\zeta)$ on suitable transvections $t_{\tilde{I}, J}(\pm \zeta\xi)\cdot t_{I, \tilde{J}}(\pm \zeta\xi)\in H$. Finally, we get an elementary transvection $t_{P\cup 1,P\cup 2}(c\xi\zeta)$, where the height of indices equals $m-1$ and the coefficient $c$ equals $\binom{n-2}{m-1}$.

Let us give an example such argument for the exterior cube of the elementary group of dimension 5. Take $\xi\in A_1,\zeta\in R$, $\bw{3}t_{1,2}(\xi\zeta)=t_{134,234}(\xi\zeta)t_{135,235}(\xi\zeta)t_{145,245}(\xi\zeta)$.\\
First, consider the commutator $[t_{134,245}(\xi),\bw{3}t_{5,3}(\zeta)]\in H$. As we mentioned above, the matrix $z_1:=t_{134,234}(-\xi\zeta)t_{145,245}(\xi\zeta)\in H$. Thus, 
$$\bw{3}t_{1,2}(\xi\zeta)\cdot z_1=t_{135,235}(\xi\zeta)t_{145,245}(2\xi\zeta)\in H.$$
To get an elementary transvection, consider one more commutator\\ 
$[t_{135,245}(\xi),\bw{3}t_{4,3}(-\zeta)]\in H$. Then the matrix $z_2:=t_{145,245}(\xi\zeta)t_{135,235}(-\xi\zeta)\in H$. It remains to multiply $\bw{3}t_{1,2}(\xi\zeta)$ and $z_1z_2$. We get the transvection $t_{145,245}(3\xi\zeta)\in H$. Therefore, $3\xi\zeta\in A_{2}$.
\end{proof}

As usual, the set $A = A_{I,J}$ is called a \textit{level} of an overgroup $H$. For level computation we need an alternative description of the relative elementary group.

\begin{lemma}\label{lem:AlterRelatForm}
For any ideal $A\trianglelefteq R$, we have
$$\E_N(A)^{\bw{m}\E_n(R)}=\E_N(R,A),$$
where by definition $\E_N(R,A)=\E_N(A)^{\E_N(R)}$.
\end{lemma}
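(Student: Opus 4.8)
\textbf{Proof proposal for Lemma~\ref{lem:AlterRelatForm}.}

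The plan is to mimic the structure of the proof of Lemma~\ref{lem:AlterRelatFor2}, which handled the exterior-square case, and to check that nothing in that argument used $m=2$ essentially. The inclusion $\E_N(A)^{\bw{m}\E_n(R)}\leq\E_N(R,A)$ is immediate, since $\bw{m}\E_n(R)\leq\E_N(R)$ and $\E_N(R,A)$ is by definition the normal closure of $\E_N(A)$ in the larger group $\E_N(R)$. So the whole content is the reverse inclusion. By Vaserstein--Suslin's lemma (cited in the excerpt, valid because $N=\binom{n}{m}\geq 3$), the group $\E_N(R,A)$ is generated by the elements $z_{I,J}(\xi,\zeta)=t_{J,I}(\zeta)\,t_{I,J}(\xi)\,t_{J,I}(-\zeta)$ with $\xi\in A$, $\zeta\in R$, and $I\neq J$ in $\bw{m}[n]$. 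Hence it suffices to show each such $z_{I,J}(\xi,\zeta)$ lies in $F:=\E_N(A)^{\bw{m}\E_n(R)}$.

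First I would split into cases according to the height $\height(I,J)=|I\cap J|$ and, more precisely, according to whether the pair $(I,J)$ can be ``realized'' inside a single exterior transvection. The easy case is when there is an exterior transvection $\bw{m}t_{a,b}(\zeta)$ whose corresponding weight arrow on the diagram goes exactly from $J$ to $I$ (equivalently $J=(K\cup b)$, $I=(K\cup a)$ for some $(m-1)$-subset $K$ with $a,b\notin K$, up to sign); then $t_{J,I}(\zeta)$ is, modulo transvections of level $A$ commuting appropriately, conjugation by a piece of $\bw{m}t_{b,a}(\zeta)$ — more carefully, $z_{I,J}(\xi,\zeta)={}^{t_{J,I}(\zeta)}t_{I,J}(\xi)$ and since $t_{I,J}(\xi)\in\E_N(A)$ we only need $t_{J,I}(\zeta)$ to normalize $F$, which it does because it appears as one factor of $\bw{m}t_{b,a}(\zeta)\in\bw{m}\E_n(R)$ and the remaining factors commute with $t_{I,J}(\xi)$ by the height-zero clause of Proposition~\ref{prop:TypesOfComm}. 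The general case is the one mirroring the second bullet of the proof of Lemma~\ref{lem:AlterRelatFor2}: write $t_{I,J}(\xi)=[t_{I,P}(\xi),t_{P,J}(1)]$ for a suitably chosen intermediate weight $P$ with $\height(I,P)=\height(P,J)=m-1$ (so that the Chevalley commutator formula in $\E_N(R)$ produces exactly this transvection), conjugate by $t_{J,I}(\zeta)$, expand the resulting $[ab,cd]$ by the identity $[ab,cd]={}^a[b,c]\cdot{}^{ac}[b,d]\cdot[a,c]\cdot{}^c[a,d]$, note the outer $a$-conjugation is harmless because $a\in\E_N(A)\leq F$, and then recognize each of $[b,c]$, ${}^c[b,d]$, $[a,c]$, ${}^c[a,d]$ as (a product of) a transvection of level $A$ times a commutator of an elementary transvection with an \emph{exterior} transvection $\bw{m}t_{*,*}(\pm 1)$ — the latter landing in $F$ because it is a conjugate-by-$\bw{m}\E_n(R)$ of something of level $A$.

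The main obstacle, and the place where $m>2$ genuinely differs from $m=2$, is the bookkeeping: for $m=2$ every pair $(I,J)$ with $\height=1$ shares a coordinate, so the intermediate $P$ and the exterior transvection $\bw{m}t_{b,a}$ are forced and small; for general $m$ one must verify that the intermediate weight $P$ and the auxiliary exterior transvections can always be chosen so that (i) the Chevalley formula genuinely yields $t_{I,J}(\xi)=[t_{I,P}(\xi),t_{P,J}(1)]$ with no extra garbage factors, and (ii) each of the four pieces in the $[ab,cd]$-expansion, after applying Proposition~\ref{prop:TypesOfComm} (the third-type commutation now producing three factors, not one), still decomposes into an $\E_N(A)$-part and a $\bw{m}\E_n(R)$-conjugate of an $\E_N(A)$-element. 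I expect this to go through provided $n\geq 2m$ (so that there is enough ``room'' in $[n]$ to choose the indices of the intermediate weight and the auxiliary transvections disjointly from the relevant support), exactly as in the square case where $n\geq 6$ was imposed; in fact the remark following Lemma~\ref{lem:AlterRelatFor2} already asserts that ``this argument proves a similar proposition in the case of general exterior power'', so the intended proof is precisely this transcription, and the work is to carry out the index choices uniformly and check the Chevalley-formula identities case by case on the weight diagram.
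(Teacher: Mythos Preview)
Your proposal correctly identifies the overall strategy---the $[ab,cd]$-expansion after expressing $t_{I,J}(\xi)$ as a commutator through an intermediate weight---but there is a genuine combinatorial obstruction in your choice of that intermediate weight. You ask for $P$ with $\height(I,P)=\height(P,J)=m-1$, i.e.\ $P$ differing from $I$ in exactly one coordinate and from $J$ in exactly one coordinate. A count shows that then at least $m-2$ elements of $P$ lie in $I\cap J$, hence $|I\cap J|\geq m-2$; for $|I\cap J|\leq m-3$ no such $P$ exists, regardless of how large $n$ is. So the case you call ``general'' only covers heights $m-1$ and $m-2$, and your argument stalls exactly where $m>2$ first differs from $m=2$ (for $m=2$ the only heights are $0$ and $1$, and your $P$ always exists---which is why the transcription looks so innocent).

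The paper repairs this by making the argument \emph{inductive} on the height (equivalently, on $q=|I\setminus J|$). The intermediate weight is $V=(I\setminus i_q)\cup j_q$, which satisfies $\height(I,V)=m-1$ but only $\height(J,V)=p+1$. In the $[ab,cd]$-expansion the factor $[b,c]=[t_{I,V}(\xi),t_{V,I}(-\zeta)]$ then falls under the base case height $m-1$, while ${}^c[a,d]$ produces, after unwinding, a factor ${}^{\bw{m}t_{j_q,i_q}(-\zeta)}z_{J,V}(-\zeta\xi,1)$: this is a $\bw{m}\E_n(R)$-conjugate of a $z$-element at height $p+1$, hence in $F$ by the inductive hypothesis rather than by a direct identification. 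Your expectation that every piece reduces to ``level-$A$ transvection times a conjugate by an exterior transvection'' is too optimistic; one of the pieces only lands in $F$ via induction, and that inductive structure is precisely the missing idea.
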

\begin{proof}
Clearly, the left-hand side is contained in the right-hand side. The proof of the inverse inclusion goes by induction on the height of $(I,J)$. By Vaserstein--Suslin's lemma~\cite{VasSusSerre} it is sufficient to check the matrix $z_{I,J}(\xi,\zeta)$ to belong to $F:=\E_N(A)^{\bw{m}\E_n(R)}$ for any $\xi \in A$, $\zeta \in R$.

In the base case $|I\cap J|=m-1$, the inclusion is obvious:
$$z_{I,J}(\xi,\zeta)\cdot t_{I,J}(-\xi)=[t_{J,I}(\zeta),t_{I,J}(\xi)]=\left[\bw{m}t_{j_1,i_1}(\zeta),t_{I,J}(\xi) \right]\in F.$$

Now, let us consider the general case $|I\cap J|=p$, i.\,e., $I=k_1\dots k_{p}i_1\dots i_{q}$ and $J=k_1\dots k_pj_1\dots j_q$. For the following calculations we need two more sets $V:=k_1\dots k_pi_1\dots i_{q-1}j_q$ and $W:=k_1\dots k_pj_1\dots j_{q-1}i_q$.\\
Firstly, we express $t_{I,J}(\xi)$ as a commutator of elementary transvections,
$$z_{I,J}(\xi,\zeta)=^{t_{J,I}(\zeta)}t_{I,J}(\xi)=^{t_{J,I}(\zeta)}[t_{I,V}(\xi),t_{V,J}(1)].$$
Conjugating the arguments of the commutator by $t_{J,I}(\zeta)$, we get
$$[t_{J,V}(\zeta\xi)t_{I,V}(\xi),t_{V,I}(-\zeta)t_{V,J}(1)]=:[ab,cd].$$
Next, we decompose the right-hand side with a help of the formula 
$$[ab,cd] ={}^a[b,c]\cdot {}^{ac}[b,d]\cdot[a,c]\cdot {}^c[a,d],$$
and observe the exponent $a$ to belong to $\E_N(A)$, so can be ignored. Now a direct calculation, based upon the Chevalley commutator formula, shows that
\begin{align*}
[b,c]&=[t_{I,V}(\xi),t_{V,I}(-\zeta)]\in F \textrm{ (by the induction step for the height } m-1);\\
^c[b,d]&=^{t_{V,I}(-\zeta)}[t_{I,V}(\xi),t_{V,J}(1)]=t_{V,W}(\xi\zeta^2)t_{I,W}(-\xi\zeta)\cdot\; ^{\bw{m}t_{j_q,i_q}(-\zeta)}t_{I,J}(\xi);\\
[a,c]&=[t_{J,V}(\zeta\xi),t_{V,I}(-\zeta)]=t_{J,I}(-\zeta^2\xi);\\
^c[a,d]&=^{t_{V,I}(-\zeta)}[t_{J,V}(\zeta\xi),t_{V,J}(1)]=\\
=t_{J,W}(-\xi\zeta^2(1+\xi\zeta))&t_{V,W}(-\xi\zeta^2)\cdot\;^{\bw{m}t_{j_q,i_q}(-\zeta)}t_{J,V}(\xi\zeta)\cdot\;^{\bw{m}t_{j_q,i_q}(-\zeta)}z_{J,V}(-\zeta\xi,1)\in F,\\
\hspace{2cm}&\hspace{4cm}\textrm{ (by the induction step for the height }p+1)
\end{align*}
where all factors on the right-hand side belong to $F$.
\end{proof}

\begin{remark}
Since we do not use the coincidental elements of $I$ and $J$, we also can prove this Lemma by induction on  $|I\backslash J| = |J\backslash I| = 1/2 \cdot |I \triangle J|$. Then we can assume that $m$ is an arbitrarily large number (mentally, $m =\infty$).
\end{remark}

\begin{corollary}\label{cor:CorolOfL6}
Suppose $A$ be an arbitrary ideal of the ring $R$; then
$$\bw{m}\E_n(R)\cdot\E_N(R,A)=\bw{m}\E_n(R)\cdot\E_N(A).$$
\end{corollary}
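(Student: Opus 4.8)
The plan is to derive the identity from Lemma~\ref{lem:AlterRelatForm} together with Suslin's theorem. The inclusion $\supseteq$ is immediate, since $\E_N(A)\leq\E_N(R,A)$. For the reverse inclusion, recall that $\E_N(R,A)\trianglelefteq\GL_N(R)$ by Suslin's theorem, so $\bw{m}\E_n(R)\cdot\E_N(R,A)$ is genuinely a subgroup of $\GL_N(R)$; it therefore suffices to prove the set-theoretic inclusion $\E_N(R,A)\subseteq\bw{m}\E_n(R)\cdot\E_N(A)$. By Lemma~\ref{lem:AlterRelatForm} the group $\E_N(R,A)$ is $\E_N(A)^{\bw{m}\E_n(R)}$, and by the Vaserstein--Suslin lemma it is generated by the transvections $z_{I,J}(\xi,\zeta)=t_{J,I}(\zeta)\,t_{I,J}(\xi)\,t_{J,I}(-\zeta)$ with $\xi\in A$ and $\zeta\in R$; so I only need to place each such $z_{I,J}(\xi,\zeta)$ inside $\bw{m}\E_n(R)\cdot\E_N(A)$.

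I would do this by induction on the height $\height(I,J)=|I\cap J|$, running exactly parallel to the proof of Lemma~\ref{lem:AlterRelatForm}. In the general step one writes $t_{I,J}(\xi)=[t_{I,V}(\xi),t_{V,J}(1)]$ with $\height(I,V)=m-1$, conjugates by $t_{J,I}(\zeta)$, and expands the resulting commutator via $[ab,cd]={}^a[b,c]\cdot{}^{ac}[b,d]\cdot[a,c]\cdot{}^c[a,d]$ with $a\in\E_N(A)$. Each factor appearing there is, by the Chevalley commutator formula and Proposition~\ref{prop:TypesOfComm}, either an elementary transvection whose parameter is a multiple of $\xi\in A$, or a $\bw{m}\E_n(R)$-conjugate of such a transvection in which no ``degenerate'' commutator $[t_{P,Q}(\cdot),t_{Q,P}(\cdot)]$ occurs (that requires $\height(P,Q)=m-1$, which does not happen at these intermediate positions), or a $z$-transvection of strictly larger height. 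The first two kinds lie in $\E_N(A)$, the last is covered by the inductive hypothesis, and — since $\bw{m}\E_n(R)$ is a group and $\E_N(A)$ absorbs products of transvections with parameters in $A$ — these factors can be collected so that all $\bw{m}\E_n(R)$-pieces stand to the left and all $\E_N(A)$-pieces to the right. Thus everything is reduced to the base case $\height(I,J)=m-1$.

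The base case is the heart of the matter and the main obstacle. With $\{i_1\}=I\setminus J$ and $\{j_1\}=J\setminus I$, all elementary factors of $\bw{m}t_{j_1,i_1}(\zeta)$ except $t_{J,I}(\pm\zeta)$ commute with $t_{I,J}(\xi)$, so, exactly as in the base case of Lemma~\ref{lem:AlterRelatForm},
$$z_{I,J}(\xi,\zeta)=\bw{m}t_{j_1,i_1}(\zeta)\cdot t_{I,J}(\xi)\cdot\bw{m}t_{j_1,i_1}(-\zeta)\in\bw{m}\E_n(R)\cdot\E_N(A)\cdot\bw{m}\E_n(R),$$
and it remains to absorb the rightmost exterior transvection. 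Here I would exploit that for $\xi\in A$ the whole exterior transvection $\bw{m}t_{i_1,j_1}(\xi)$ already lies in $\E_N(A)$ — by Proposition~\ref{prop:ImageOfTransvForm} every one of its elementary factors has parameter $\pm\xi\in A$ — while $t_{I,J}(\pm\xi)$ is exactly the factor of $\bw{m}t_{i_1,j_1}(\xi)$ indexed by $L=I\cap J$; hence $t_{I,J}(\xi)$ differs from $\bw{m}t_{i_1,j_1}(\pm\xi)$ by an element of $\E_N(A)$, and the product above can be rearranged, modulo $\E_N(A)$, as $\bw{m}\bigl(z_{i_1,j_1}(\pm\xi,\zeta)\bigr)$, which is the $\bw{m}$-image of an element of $\E_n(R)$ and therefore lies in $\bw{m}\E_n(R)$. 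Pinning down the signs and tracking the residual elementary factors (those indexed by the remaining $L\neq I\cap J$) in this rearrangement is the only genuinely technical point, and it is where the computation has to be carried out carefully, much as in the orthogonal and symplectic prototypes. Once it is done, $z_{I,J}(\xi,\zeta)\in\bw{m}\E_n(R)\cdot\E_N(A)$ for every height, which closes the induction and finishes the proof.
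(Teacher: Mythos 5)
There is a genuine gap, and in addition you have chosen a much harder route than what the corollary actually requires.

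The corollary is meant to be read in the way the paper uses it just afterwards: $\bw{2}\E_n(R)\cdot\E_N(R,A)$ is ``\emph{generated as a subgroup} by transvections $\bw{2}t_{i,j}(\zeta)$ and by elementary transvections $t_{ij,hk}(\xi)$, $\xi\in A$''. In other words, the right--hand side is the subgroup generated by $\bw{m}\E_n(R)$ and $\E_N(A)$, and then the statement is a one--line consequence of Lemma~\ref{lem:AlterRelatForm}: the subgroup $\langle \bw{m}\E_n(R),\E_N(A)\rangle$ contains $\bw{m}\E_n(R)$, hence is normalized by $\bw{m}\E_n(R)$, hence contains $\E_N(A)^{\bw{m}\E_n(R)}=\E_N(R,A)$; the reverse inclusion is trivial because $\E_N(A)\leq\E_N(R,A)$ and the latter is normal in $\GL_N(R)$. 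No commutator calculus, no induction on height, no reproving of Lemma~\ref{lem:AlterRelatForm} is needed.

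You instead try to establish the literal \emph{set--product} identity $\E_N(R,A)\subseteq\bw{m}\E_n(R)\cdot\E_N(A)$ by showing every Vaserstein--Suslin generator $z_{I,J}(\xi,\zeta)$ lies in the set $\bw{m}\E_n(R)\cdot\E_N(A)$. This reduction is unjustified: a product of set elements $g_1x_1\cdot g_2x_2$ with $g_i\in\bw{m}\E_n(R)$, $x_i\in\E_N(A)$ is not obviously of the same form, because $x_1g_2=g_2\bigl(g_2^{-1}x_1g_2\bigr)$ and $g_2^{-1}x_1g_2$ lands only in $\E_N(R,A)$, not in $\E_N(A)$ --- precisely what you are trying to prove. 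So knowing the generators are in the set does not put the group they generate in the set. The same circularity already appears inside your base case: after pulling out $\bw{m}\bigl(z_{i_1,j_1}(\pm\xi,\zeta)\bigr)$, you are left with the conjugate $\,{}^{\bw{m}t_{j_1,i_1}(\zeta)}\bigl(\prod_{L\neq I\cap J}t_{L\cup i_1,L\cup j_1}(\mp\xi)\bigr)$, which is again a product of height--$(m-1)$ elements $z_{L\cup i_1,L\cup j_1}(\mp\xi,\pm\zeta)$; that is the base case all over again, so the recursion never bottoms out. To make your route work you would first have to prove that $\bw{m}\E_n(R)\cdot\E_N(A)$ is closed under multiplication, which is exactly the content of the corollary and is not established anywhere in your argument.
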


Summarizing Proposition~\ref{prop:IdealEqual} and Lemma~\ref{lem:AlterRelatForm}, we get the main result of the paper for the general case.
\LevelForm*

\subsection{Normalizer of \texorpdfstring{$\E\bw{m}\E_n(R,A)$}{TEXT}}\label{subsec:normgeneral}

In this section, we describe a normalizer of the lower bound for a group $H$.
\begin{lemma}\label{lem:PerfectForm}
Let $n\geq 3m$. The group $\E\bw{m}\E_n(R,A) := \bw{m}\E_n(R)\cdot\E_N(R,A)$ is perfect for any ideal $A\trianglelefteq R$.
\end{lemma}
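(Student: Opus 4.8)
The plan is to show that $G := \bw{m}\E_n(R)\cdot\E_N(R,A)$ equals its own commutator subgroup $[G,G]$. Since $G$ is generated by the two families $\bw{m}\E_n(R)$ and $\E_N(R,A)$, it suffices to check that every generator of each family lies in $[G,G]$. For the elementary part this is the easier half: using the Chevalley commutator formula inside $\bw{m}\E_n(R)$ and the fact that $n \geq 3m \geq 3$, every exterior transvection $\bw{m}t_{i,j}(\xi)$ can be written as a product of commutators $[\bw{m}t_{i,k}(\ast),\bw{m}t_{k,j}(\ast)]$ (pick an intermediate index $k$, which exists since $n \geq 3$), so $\bw{m}\E_n(R) = [\bw{m}\E_n(R),\bw{m}\E_n(R)] \leq [G,G]$. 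One may also invoke that $\bw{m}\E_n(R)$ is a quotient of the perfect group $\E_n(R)$ for $n \geq 3$.

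The substantive half is to place the generators of $\E_N(R,A)$ inside $[G,G]$. By Lemma~\ref{lem:AlterRelatForm} (equivalently Corollary~\ref{cor:CorolOfL6}) the group $\E_N(R,A)$ coincides with $\E_N(A)^{\bw{m}\E_n(R)}$, so it is enough to treat the plain elementary transvections $t_{I,J}(\xi)$ with $\xi\in A$ and then propagate by $\bw{m}\E_n(R)$-conjugation, which stays inside $[G,G]$ once $\bw{m}\E_n(R)\leq[G,G]$. For a single $t_{I,J}(\xi)$ I would reproduce the mechanism already used repeatedly in the level computation: choose an index $k$ not appearing in the relevant coordinates (possible because $N$ is large, indeed because $n\geq 3m$ leaves room), write $t_{I,J}(\xi)$ as a third-type or second-type commutator of the form $[t_{I,J'}(\xi),\bw{m}t_{\ast,\ast}(1)]$ following Proposition~\ref{prop:TypesOfComm}, where the first argument is again an elementary transvection of level $A$ lying in $\E_N(R,A)\leq G$ and the second lies in $\bw{m}\E_n(R)\leq G$. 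Some of these commutator identities produce a product of two or three transvections rather than a single one; in that case I would clear the unwanted factors by multiplying by further such commutators, exactly as in the proofs of Lemma~\ref{lem:LevelsEqual0} and Proposition~\ref{prop:IdealEqual}, until only $t_{I,J}(\xi)$ survives.

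The step I expect to be the main obstacle is the bookkeeping in this last clearing process: ensuring that, for every height $|I\cap J|$ from $0$ up to $m-1$, one can simultaneously realize $t_{I,J}(\xi)$ as a commutator with arguments in $G$ and cancel the parasitic factors without re-introducing elements outside $[G,G]$. The cleanest way to organize this is an induction on the height, mirroring the structure of Lemma~\ref{lem:AlterRelatForm}: in the base case $|I\cap J| = m-1$ the identity $z_{I,J}(\xi,\zeta)t_{I,J}(-\xi) = [\bw{m}t_{j_1,i_1}(\zeta),t_{I,J}(\xi)]$ already exhibits $t_{I,J}(\xi)$ modulo $\E_N(R,A)$-conjugates as a commutator in $G$, and the inductive step uses the $[ab,cd]$-decomposition to push everything down to lower height plus genuine commutators. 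Once all generators of both families are shown to lie in $[G,G]$, we conclude $G = [G,G]$, i.e. $G$ is perfect, which also yields Lemma~\ref{lem:PerfectFor2} as the special case $m=2$.
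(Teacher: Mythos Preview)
Your plan matches the paper's approach: show each generator of $G$ lies in $[G,G]$, handling $\bw{m}t_{i,j}(\zeta)$ via the Cauchy--Binet homomorphism and $t_{I,J}(\xi)$, $\xi\in A$, by expressing it as a commutator with one argument in $\bw{m}\E_n(R)$. The paper's execution of the second half is, however, much simpler than what you anticipate. Writing $I\setminus J=\{i_1,\dots,i_q\}$, $J\setminus I=\{j_1,\dots,j_q\}$ and setting $V=(I\cap J)\cup\{j_1,\dots,j_{q-1},i_q\}$, one gets in one stroke
\[
t_{I,J}(\xi)=\bigl[t_{I,V}(\xi),\ \bw{m}t_{i_q,j_q}(\pm 1)\bigr],
\]
a single \emph{second-type} commutation in the sense of Proposition~\ref{prop:TypesOfComm}(2): since $i_q\in V$ but $j_q\notin I$, exactly one factor of $\bw{m}t_{i_q,j_q}(\pm1)$ interacts with $t_{I,V}(\xi)$, and no parasitic transvections appear. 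Thus there is no induction on height and nothing to clear; the ``main obstacle'' you flag never materializes, and the appeal to Lemma~\ref{lem:AlterRelatForm}, the $[ab,cd]$-decomposition, and the $z_{I,J}$ identity are all unnecessary here. (For the boundary case $q=1$ one has $V=I$; then take instead $V=(I\cap J)\cup\{a\}$ for any $a\notin I\cup J$, which exists since $n\geq 3m>m+1$, and the same one-line computation with $\bw{m}t_{a,j_1}(\pm1)$ works. Your idea of choosing an external index is exactly what is needed in this case.)
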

\begin{proof}
It is sufficient to verify all generators of the group $\bw{m}\E_n(R)\cdot\E_N(R,A)$ to lie in its commutator subgroup, which will be denoted by $F$. The proof goes in two steps.
\begin{itemize}
\item For the transvections $\bw{m}t_{i,j}(\zeta)$ this follows from the Cauchy--Binet homomorphism:
$$\bw{m}t_{i,j}(\zeta)=\bw{m}([t_{i,h}(\zeta),t_{h,j}(1)])=\left[\bw{m}t_{i,h}(\zeta),\bw{m}t_{h,j}(1)\right].$$
\item For elementary transvections $t_{I,J}(\xi)$ this can be done as follows. Suppose that $I\cap J=K = k_1\dots k_p$, where $0\leq p\leq m-1$, i.\,e., $I=k_1\dots k_pi_1\dots i_q$ and $J=k_1\dots k_pj_1\dots j_q$. As in Lemma~\ref{lem:AlterRelatForm}, we define a set $V=k_1\dots k_{p}j_1\dots j_{q-1}i_q$. And then 
$$t_{I,J}(\xi)=[t_{I,V}(\xi),t_{V,J}(1)]=\left[t_{I,V}(\xi), \bw{m}t_{i_q,j_q}(\pm 1) \right],$$
so we get the required.
\end{itemize}
\end{proof}

Let, as above, $A\trianglelefteq R$, and let $R/A$ be the factor-ring of $R$ modulo $A$. Denote by $\rho_A: R\longrightarrow R/A$ the canonical projection sending $\lambda\in R$ to $\bar{\lambda}=\lambda+A\in R/A$. Applying the projection to all entries of a matrix, we get the reduction homomorphism 
$$\begin{array}{rcl}
\rho_{A}:\GL_{n}(R)&\longrightarrow& \GL_{n}(R/A)\\
a &\mapsto& \overline{a}=(\overline{a}_{i,j})
\end{array}$$
The kernel of the homomorphism $\rho_{A}$, $\GL_{n}(R,A)$, is called the \textit{principal congruence subgroup in $\GL_{n}(R)$ of level $A$}. Now, let $\CC_n(R)$ be the center of the group $\GL_{n}(R)$, consisting of the scalar matrices
$\lambda e, \lambda \in R^{*}$. The full preimage of the center of $\GL_{n}(R/A)$, denoted by $\CC_n(R,A)$, is called the \textit{full congruence subgroup of level $A$}. The group $\CC_n(R,A)$ consists of all matrices congruent to a scalar matrix modulo $A$. We further concentrate on a study of the full preimage of the group $\bw{m}\GL_{n}(R/A)$:
$$\CC\bw{m}\GL_{n}(R,A)=\rho_{A}^{-1}\bigl(\bw{m}\GL_{n}(R/A)\bigr).$$
A key point in a reduction modulo an ideal is the following standard commutator formula, proved by Leonid Vaserstein~\cite{VasersteinGLn}, Zenon Borevich, and Nikolai Vavilov~\cite{BV84}.
$$\bigl[\E_n(R),\CC(n,R,A)\bigr]=\E_n(R,A) \textrm{ for a commutative ring }R \textrm{ and } n\geq 3.$$

Finally, we are ready to state the \textit{level reduction} result.
\LevelReduction*
\begin{proof}
In the proof by $N$ we mean $N_{\GL_{N}(R)}$.

Since $\E_N(R,A)$ and $\GL_{N}(R,A)$ are normal subgroups in $\GL_{N}(R)$, we see
$$N\bigl(\underbrace{\E\bw{m}\E_n(R,A)}_{=\bw{m}\E_n(R)\E_N(R,A)}\bigr)\leq N\bigl(\E\bw{m}\E_n(R,A)\GL_{N}(R,A)\bigr)=\CC\bw{m}\GL_{n}(R,A). \eqno(1)$$
Note that the latter equality is due to the normalizer functoriality:
$$N\left(\E\bw{m}\E_n(R,A)\GL_{N}(R,A)\right)=N\left(\rho_A^{-1}\left(\bw{m}\E_n(R/A)\right)\right)=\rho_A^{-1}\left(N \left(\bw{m}\E_n(R/A)\right)\right)=\rho_A^{-1}\left(\bw{m}\GL_{n}(R/A)\right).$$
In particular, using $(1)$, we get
$$\left[\CC\bw{m}\GL_{n}(R,A),\E\bw{m}\E_n(R,A)\right]\leq \E\bw{m}\E_n(R,A)\GL_{N}(R,A).\eqno(2)$$
On the other hand, it is completely clear $\E\bw{m}\E_n(R,A)$ to be normal in the right-hand side subgroup. Indeed, it is easy to prove the following stronger inclusion:
$$\left[\bw{m}\GL_{n}(R)\GL_{N}(R,A),\E\bw{m}\E_n(R,A)\right]\leq \E\bw{m}\E_n(R,A).\eqno(3)$$
To check this, we consider a commutator of the form
$$[xy,hg],\qquad x\in \bw{m}\GL_{n}(R), y\in \GL_{N}(R,A), h\in \bw{m}\E_n(R), g\in \E_N(R,A).$$
Then $[xy, hg]={}^x[y,h] \cdot [x,h] \cdot {}^h[xy,g]$. We need to prove all factors on the right-hand side to belong to $\E\bw{m}\E_n(R,A)$. Right away, the second factor lies in the group $\E\bw{m}\E_n(R,A)$. For the first commutator, we should consider the following inclusions:
$${}^{\bw{m}\GL_{n}(R)}\left[\GL_{N}(R,A),\bw{m}\E_n(R)\right]\leq \Bigl[
\underbrace{{}^{\bw{m}\GL_{n}(R)}\GL_{N}(R,A)}_{=\GL_{N}(R,A)},\underbrace{{}^{\bw{m}\GL_{n}(R)}\bw{m}\E_n(R)}_{=\bw{m}\E_n(R)}\Bigr]\leq \E\bw{m}\E_n(R,A).$$ 
The element $h\in \bw{m}\E_n(R)$, so we ignore it in conjugation. The third commutator lies in  $\E\bw{m}\E_n(R,A)$ due to the following inclusion. 
$$\left[\bw{m}\GL_{n}(R)\GL_{N}(R,A),\E_N(R,A)\right]\leq\left[\GL_{N}(R),\E_N(R,A)\right]=\E_N(R,A).$$
Now if we recall $(2)$ and $(3)$, we get
$$\left[\CC\bw{m}\GL_{n}(R,A),\E\bw{m}\E_n(R,A),\E\bw{m}\E_n(R,A)\right]\leq \E\bw{m}\E_n(R,A).\eqno(4)$$
To invoke the Hall–-Witt identity, we need a slightly more precise version of the latter
inclusion:
$$\left[\left[\CC\bw{m}\GL_{n}(R,A),\E\bw{m}\E_n(R,A)\right],\left[\CC\bw{m}\GL_{n}(R,A),\E\bw{m}\E_n(R,A)\right]\right]\leq \E\bw{m}\E_n(R,A).\eqno(5)$$
Observe that by formula $(2)$ we have already checked the left-hand side to be generated by the commutators
of the form 
$$[uv,[z,y]], \text{ where } u,y\in \E\bw{m}\E_n(R,A), v\in \GL_N(R,A), z\in \CC\bw{m}\GL_n(R,A).$$
However,
$$[uv,[z,y]]={}^u[v,[z,y]]\cdot[u,[z,y]],$$
By formula $(4)$ the second commutator belongs to $\E\bw{m}\E_n(R,A)$, whereas by $(5)$ the first is an element of $\left[\GL_{N}(R,A),\E_N(R)\right]\leq \E_N(R,A)$. 

Now we are ready to finish the proof. By the previous lemma, the group $\E\bw{m}\E_n(R,A)$
is perfect, and thus, it suffices to show $[z,[x,y]]\in \E\bw{m}\E_n(R,A)$ for all $x,y\in \E\bw{m}\E_n(R,A), z\in \CC\bw{m}\GL_{n}(R,A)$. Indeed, the Hall–-Witt identity yields
$$[z,[x,y]]={}^{xz}[[z^{-1},x^{-1}],y]\cdot{}^{xy}[[y^{-1},z],x^{-1}],$$
where the second commutator belongs to $\E\bw{m}\E_n(R,A)$ by $(4)$.
Removing the conjugation by $x\in \E\bw{m}\E_n(R,A)$ in the first commutator and carrying the conjugation by $z$ inside
the commutator, we see that it only remains to prove the relation $[[x^{-1},z],[z,y]y]\in \E\bw{m}\E_n(R,A)$. Indeed,
$$[[x^{-1},z],[z,y]y]=[[x^{-1},z],[z,y]]\cdot{}^{[z,y]}[[x^{-1},z],y],$$
where both commutators on the right--hand side belong to $\E\bw{m}\E_n(R,A)$ by formulas $(4)$ and $(5)$, and moreover,
the conjugating element $[z,y]$ in the second commutator is an element of the group $\E\bw{m}\E_n(R,A)\GL_{N}(R,A)$, and thus by $(3)$, normalizes $\E\bw{m}\E_n(R,A)$.
\end{proof}

\bibliographystyle{unsrt}
\bibliography{english}

\begin{thebibliography}{10}

\bibitem{AschbacherClasses}
M.~Aschbacher.
\newblock On the maximal subgroups of the finite classical groups.
\newblock {\em Invent. Math.}, 76:469--514, 1984.

\bibitem{KleidLiebSubStruct}
Peter Kleidman and Martin Liebeck.
\newblock {\em The subgroup structure of the finite classical groups}, volume
  129 of {\em London Mathematical Society Lecture Note Series}.
\newblock Cambridge University Press, Cambridge, 1990.

\bibitem{LiebSeitStrucClass}
Martin~W. Liebeck and Gary~M. Seitz.
\newblock On the subgroup structure of classical groups.
\newblock {\em Invent. Math.}, 134(2):427--453, 1998.

\bibitem{KingDiag}
Oliver King.
\newblock Subgroups of the special linear group containing the diagonal
  subgroup.
\newblock {\em J. Algebra}, 132(1):198--204, 1990.

\bibitem{KingUnit}
Oliver King.
\newblock On subgroups of the special linear group containing the special
  unitary group.
\newblock {\em Geom. Dedicata}, 19(3):297--310, 1985.

\bibitem{KingOrthog}
Oliver King.
\newblock On subgroups of the special linear group containing the special
  orthogonal group.
\newblock {\em J. Algebra}, 96(1):178--193, 1985.

\bibitem{DyeSpO}
R.~H. Dye.
\newblock Interrelations of symplectic and orthogonal groups in characteristic
  two.
\newblock {\em J. Algebra}, 59(1):202--221, 1979.

\bibitem{DyeOinSp}
Roger~H. Dye.
\newblock On the maximality of the orthogonal groups in the symplectic groups
  in characteristic two.
\newblock {\em Math. Z.}, 172(3):203--212, 1980.

\bibitem{DyeSubPolarities}
R.~H. Dye.
\newblock Maximal subgroups of {${\rm GL}_{2n}(K)$}, {${\rm SL}_{2n}(K)$},
  {${\rm PGL}_{2n}(K)$} and {${\rm PSL}_{2n}(K)$} associated with symplectic
  polarities.
\newblock {\em J. Algebra}, 66(1):1--11, 1980.

\bibitem{ShangZhiGL}
Shang~Zhi Li.
\newblock Overgroups in {${\rm GL}(n,F)$} of a classical group over a subfield
  of {$F$}.
\newblock {\em Algebra Colloq.}, 1(4):335--346, 1994.

\bibitem{ShangZhiUnit}
Shang~Zhi Li.
\newblock Overgroups of a unitary group in {${\rm GL}(2,K)$}.
\newblock {\em J. Algebra}, 149(2):275--286, 1992.

\bibitem{ShangZhiSUOmega}
Shang~Zhi Li.
\newblock Overgroups of {${\rm SU}(n,K,f)$} or {$\Omega(n,K,Q)$} in {${\rm
  GL}(n,K)$}.
\newblock {\em Geom. Dedicata}, 33(3):241--250, 1990.

\bibitem{VavSubgroup90}
N.~A. Vavilov.
\newblock On subgroups of split classical groups.
\newblock {\em Proc. Steklov. Inst. Math.}, 183:27--41, 1990.

\bibitem{VavSbgs}
N.~A. Vavilov.
\newblock Intermediate subgroups in {C}hevalley groups.
\newblock {\em London Math. Soc. Lecture Note Ser.}, 207:233--280, 1995.

\bibitem{VavStepSurvey}
N.~A. Vavilov and A.~V. Stepanov.
\newblock Overgroups of semisimple groups.
\newblock {\em Vestn. Samar. Gos. Univ. Estestvennonauchn. Ser.}, (3):51--95,
  2008.

\bibitem{BV82}
Z.~I. Borevich and N.~A. Vavilov.
\newblock The distribution of subgroups contaning a group of block diagonal
  matrices in the general linear group over a ring.
\newblock {\em Russian Math. (Iz. VUZ)}, 26(11):13--18, 1982.

\bibitem{BV84}
Z.~I. Borevich and N.~A. Vavilov.
\newblock The distribution of subgroups in the general linear group over a
  commutative ring.
\newblock {\em Proc. Steklov. Inst. Math.}, 165:27--46, 1985.

\bibitem{VP-EOeven}
N.~Vavilov and V.~Petrov.
\newblock Overgroups of {$\mathrm{EO}(2l,R)$}.
\newblock {\em J. Math. Sci. (N. Y.)}, 116(1):2917--2925, 2003.

\bibitem{VP-EOodd}
N.~Vavilov and V.~Petrov.
\newblock Overgroups of {$\mathrm{EO}(n,R)$}.
\newblock {\em St.Petersburg Math. J.}, 19(2):167--195, 2008.

\bibitem{VP-Ep}
N.~Vavilov and V.~Petrov.
\newblock Overgroups of {$\mathrm{Ep}(2l,R)$}.
\newblock {\em St.Petersburg Math. J.}, 15(4):515--543, 2004.

\bibitem{PetOvergr}
V.~A. Petrov.
\newblock Overgroups of unitary groups.
\newblock {\em K-Theory}, 29:77--108, 2003.

\bibitem{StepGL}
A.~V. Stepanov.
\newblock Nonstandard subgroups between {$\mathrm{E}_n(R)$} and
  {$\mathrm{GL}_n(A)$}.
\newblock {\em Algebra Colloq}, 10(3):321--334, 2004.

\bibitem{StepNonstandard}
A.~V. Stepanov.
\newblock Free product subgroups between {C}hevalley groups
  {$\mathrm{G}(\Phi,F)$} and {$\mathrm{G}(\Phi,F[t])$}.
\newblock {\em J. Algebra}, 324(7):1549--1557, 2010.

\bibitem{LuzE6E7-GL}
A.~Yu. Luzgarev.
\newblock On overgroups of {$E(E_6,R)$} and {$E(E_7,R)$} in their minimal
  representations.
\newblock {\em J. Math. Sci. (N. Y.)}, 134(6):2558--2571, 2004.

\bibitem{LuzF4-E6}
A.~Yu. Luzgarev.
\newblock Overgroups of {$F_4$} in {$E_6$} over commutative rings.
\newblock {\em St.Petersburg Math. J.}, 20:955--981, 2009.

\bibitem{VavLuzgE6}
N.~A. Vavilov and A.~Yu. Luzgarev.
\newblock Normalizer of the {C}hevalley group of type {$E_6$}.
\newblock {\em St.Petersburg Math. J.}, 19(5):699--718, 2008.

\bibitem{VavLuzgE7}
N.~A. Vavilov and A.~Yu. Luzgarev.
\newblock Normalizer of the {C}hevalley group of type {$E_7$}.
\newblock {\em St.Petersburg Math. J.}, 27(6):899--921, 2016.

\bibitem{AnaVavSinI}
A.~S. Ananevskii, N.~A. Vavilov, and S.~S. Sinchuk.
\newblock Overgroups of {$\mathrm{E}(l,R)\otimes \mathrm{E}(m,R)$} {I}.
  {L}evels and normalizers.
\newblock {\em St.Petersburg Math. J.}, 23(5):819--849, 2012.

\bibitem{LubStepSub}
R.~Lubkov and A.~Stepanov.
\newblock Subgroups of {C}hevalley groups over rings.
\newblock {\em J. Math. Sci. (N. Y.)}, 252:829--840, 2021.

\bibitem{CoopersteinStructure}
Bruce~N. Cooperstein.
\newblock Nearly maximal representations for the special linear group.
\newblock {\em Michigan Math. J.}, 27(1):3--19, 1980.

\bibitem{SeitzMaxSub}
G.~M. Seitz.
\newblock {\em The Maximal Subgroups of Classical Algebraic Groups}.
\newblock American Mathematical Society: Memoirs of the American Mathematical
  Society. American Mathematical Society, 1987.

\bibitem{GariGura2015}
Skip Garibaldi and Robert~M. Guralnick.
\newblock Simple groups stabilizing polynomials.
\newblock {\em Forum Math. Pi}, 3:e3, 41, 2015.

\bibitem{BVnets}
Z.~I. Borevich and N.~A. Vavilov.
\newblock Definition of a net subgroup.
\newblock {\em J. Soviet Math.}, 30(1):1810--1816, 1985.

\bibitem{BVdet}
Z.~I. Borevich and N.~A. Vavilov.
\newblock Determinants in net subgroups.
\newblock {\em J. Soviet Math.}, 27(4):2855--2865, 1984.

\bibitem{Borevich1984}
Z.~I. Borevich and L.~Yu. Kolotilina.
\newblock Subnormalizer of net subgroups in the general linear group over a
  ring.
\newblock {\em Journal of Soviet Mathematics}, 26(3):1844--1848, Aug 1984.

\bibitem{StepVavDecomp}
A.~V. Stepanov and N.~A. Vavilov.
\newblock Decomposition of transvections: {T}heme with variations.
\newblock {\em K-Theory}, 19(2):109--153, 2000.

\bibitem{SuslinSerreConj}
A.~A. Suslin.
\newblock On the structure of the special linear group over polynomial rings.
\newblock {\em Math. USSR. Izvestija}, 11:221--238, 1977.

\bibitem{VasSusSerre}
L.~N. Vaserstein and A.~A. Suslin.
\newblock {S}erre's problem on projective modules over polynomial rings, and
  algebraic {K}-theory.
\newblock {\em Math. USSR. Izvestija}, 10(5):937--1001, 1976.

\bibitem{TitsCongruence}
J.~Tits.
\newblock Systemes g\'en\'erateurs de groupes de congruences.
\newblock {\em C. R. Acad. Sci., Paris, S\'er. A}, 283:693--695, 1976.

\bibitem{LubNek18}
R.~A. Lubkov and I.~I. Nekrasov.
\newblock Explicit equations for the exterior square of the general linear
  group.
\newblock {\em J. Math. Sci. (N. Y.)}, 243(4):583--594, 2019.

\bibitem{LubReverse2}
R.~Lubkov.
\newblock The reverse decomposition of unipotents for bivectors.
\newblock {\em Comm. Algebra}, 49(10):4546--4556, 2021.

\bibitem{LubReversemArx}
R.~Lubkov.
\newblock The reverse decomposition of unipotens in polyvector representations.
\newblock Zapiski nauchnyh seminarov {POMI}, accepted for publication, 11
  pages, 2022.

\bibitem{VavPere}
N.~A. Vavilov and E.~Ya. Perelman.
\newblock Polyvector representations of {$\mathrm{GL}_n$}.
\newblock {\em J. Math. Sci. (N. Y.)}, 145(1):4737--4750, 2007.

\bibitem{PetrovStavrovaIsotropic}
V.~Petrov and A.~Stavrova.
\newblock Elementary subgroups in isotropic reductive groups.
\newblock {\em St.Petersburg Math. J.}, 20(4):625--644, 2009.

\bibitem{WaterhousePGL}
William~C. Waterhouse.
\newblock Automorphisms of quotients of {$\Pi {\rm GL}(n_{i})$}.
\newblock {\em Pacific J. Math.}, 102(1):221--233, 1982.

\bibitem{VasersteinGLn}
L.~N. Vaserstein.
\newblock On the normal subgroups of {GL$_n$} over a ring.
\newblock {\em Lecture Notes in Math.}, 854:456--465, 1981.

\end{thebibliography}

\end{document}